\documentclass{article}
\usepackage{amsmath,amssymb,amsthm}
\usepackage{graphicx,subfigure,float,url,color}
\usepackage{xcolor}
\usepackage{mathrsfs}
\usepackage[colorlinks=true]{hyperref}
\usepackage{pdfsync}

\newtheorem{theorem}{Theorem}[section]
\newtheorem{proposition}[theorem]{Proposition}
\newtheorem{lemma}[theorem]{Lemma}
\newtheorem{corollary}[theorem]{Corollary}
\theoremstyle{definition}
\newtheorem{definition}[theorem]{Definition}
\newtheorem{remark}[theorem]{Remark}

\renewcommand{\geq}{\geqslant}
\renewcommand{\leq}{\leqslant}

\newcommand{\R}{\mathbb{R}}
\newcommand{\N}{\mathbb{N}}
\newcommand{\vol}{\mathrm{vol}}

\newcommand{\bigO}{\mathrm{O}}

\newcommand{\bm}{\beta_{\mathrm{max}}}

\DeclareMathOperator{\dist}{dist}

\title{Weyl asymptotics for singular metrics with a variable boundary degeneracy exponent}
\author{Yves Colin de Verdi\`ere\footnote{Universit\'e Grenoble-Alpes, Institut Fourier, Unit{\'e} mixte de recherche CNRS-UGA 5582, BP 74, 38402-Saint Martin d'H\`eres Cedex (France) (\texttt{yves.colin-de-verdiere@univ-grenoble-alpes.fr})},
\and
Charlotte Dietze\footnote{Sorbonne Universit\'e, CNRS, Universit\'e Paris Cit\'e, Laboratoire Jacques-Louis Lions (LJLL), F-75005 Paris, France (\texttt{charlotte.dietze@sorbonne-universite.fr})}, 
\and
Emmanuel Tr\'elat\footnote{{\bf Corresponding author.} Sorbonne Universit\'e, CNRS, Universit\'e Paris Cit\'e, Inria, Laboratoire Jacques-Louis Lions (LJLL), F-75005 Paris, France (\texttt{emmanuel.trelat@sorbonne-universite.fr})}
}
\date{}

\begin{document}
\maketitle

\begin{abstract}
We consider a compact smooth manifold $X$ of dimension $n+1$ with boundary $M=\partial X$. In a collar neighborhood of $M$, we assume that the metric has the form $g=u^{-\alpha}\bar g$, where $u$ is a boundary defining function, $\alpha\in C^1(M;[0,2))$ and $\bar g$ is a $C^1$ Riemannian metric up to $M$. Since $\alpha<2$, the boundary lies at finite $g$-distance and $(X,g)$ is a singular metric space. We study the Weyl asymptotics of the Friedrichs Laplacian $\triangle_g$ when the degeneracy exponent $\alpha$ varies along $M$.

If the maximum $\alpha_{\mathrm{max}}$ of $\alpha$ on $M$ is strictly larger than the critical value $\alpha_c=\frac{2}{n+1}$, then we prove that the points where $\alpha$ is close to $\alpha_{\mathrm{max}}$ govern the leading term in the Weyl asymptotics. If $\alpha_{\mathrm{max}}\leq\alpha_c$, then the leading term is governed by the truncated volume $\vol_g(\{\dist(\cdot,M)>\lambda^{-1/2}\})$. When the maximum set of $\alpha$ is Morse-Bott, we compute the associated constants and the logarithmic corrections. To the best of our knowledge, this is the first Weyl law in this setting with a boundary-dependent degeneracy exponent. The results highlight a sharp transition at $\alpha_c$ between a boundary-dominated non-classical regime and a truncated-volume regime.
\end{abstract} 

\noindent{\bf Keywords:} Weyl law with variable exponent, singular metrics.

\medskip

\noindent{\bf 2020 Mathematics Subject Classification:} 58J50, 35P20, 53C21.


\section{Introduction and main results}
\subsection{Setting}

Let $X$ be a smooth compact manifold of dimension $n+1$ with boundary $M=\partial X$ of dimension $n$ and let $g$ be a Riemannian metric on $X\setminus M $ such that, in a collar neighborhood  $(0,1]_u\times M$ of the boundary,
\begin{equation*}
g = u^{-\alpha} \bar{g} , 
\end{equation*}
where $\alpha:M\to [0,2)$ is of class $C^1$ and $\bar g$ is a Riemannian metric on $X$ of class $C^1$ up to the boundary.
We set
$$
\beta = \frac{2\alpha}{2-\alpha}
\qquad\textrm{and}\qquad
\bm = \sup_{m\in M}\beta(m) .
$$
The assumption $\alpha<2$ implies that every point of $X$ lies at finite $g$-distance from the boundary.
The case $\alpha=0$ corresponds to an ordinary Riemannian metric on $X$.

We consider the (nonnegative) Laplace-Beltrami operator $\triangle_g$ associated with $g$, realized by the Friedrichs extension on $L^2(X,\vol_g)$ with core $C_0^\infty (X\setminus \partial X)$. 
It is known, see for instance \cite{CDHDT24}, that $\triangle_g$ has discrete spectrum with eigenvalues $0<\lambda_1\leq\cdots\leq\lambda_j\leq\cdots$. 
This holds for a variable exponent as well: the Hardy-type lower bound of Lemma \ref{le:0ev}, whose proof does not use the constancy of $\beta$, shows that the bottom of the spectrum near the boundary tends to $+\infty$ at the wavelength scale, so that the form domain embeds compactly into $L^2(X)$ and $\triangle_g$ has compact resolvent.
 We study the asymptotic behavior of the counting function $N(\lambda)=\#\{j\in\N^*\,\mid\,\lambda_j\leq \lambda\}$ as $\lambda\to +\infty$.

For a \emph{fixed constant} exponent $\alpha>0$, the spectral asymptotics of $\triangle_g$ is known (see \cite{CDHDT24}) and exhibits three regimes separated by the critical value $\alpha_c=2/(n+1)$ or $\beta_c=2/n$. For notational convenience, we mostly work with the parameter $\beta  = 2\alpha/(2-\alpha)$.
The constant-exponent result of \cite{CDHDT24} reads as follows.

\begin{theorem}[\cite{CDHDT24}]\label{theo:main0}
Assume that $\alpha \in [0,2)$ is constant and set $\beta  =2\alpha/(2-\alpha)$. Let $h_0$ be the metric on $M$ that is the restriction of $\bar{g}$ to the boundary with a suitable choice of a transverse coordinate $u$ (see Section \ref{sec:normal} for the precise definition of $h_0$). 
Let $\triangle_g$ be the Friedrichs extension of the Laplace-Beltrami operator on $X$ with respect to $g$, with core $C_0^\infty(X\setminus \partial X)$. Let $\gamma_n=b_n/(2\pi)^n$ denote the semiclassical constant, where $b_n$ is the volume of the unit ball in $\R^n$, and let $N(\lambda)$ be the counting function of $\triangle_g$.
\begin{enumerate}
\item If $\beta > \beta_c$ then $N(\lambda ) \sim  A(\beta, n)\,\vol_{h_0}(M)\, \lambda^{d_H/2}$ as $\lambda \rightarrow +\infty$, where $d_H = n(1+\beta/2) $ is the Hausdorff dimension of $(X,g)$ and $A(\beta, n) = \gamma_n \, \zeta_{\beta,n} (d_H/2)$, with $\zeta_{\beta,n} $ the spectral $\zeta$ function of the 1D operator $P_1 = -\partial_x^2 + \frac{\beta n (\beta n + 4 )}{16 x^2} + x^\beta$ on $L^2 (\R^+)$ with Dirichlet boundary conditions. 
\item If $\beta = \beta_c = 2/n$ then $N(\lambda ) \sim  \frac{1}{2}\gamma_{n+1} \, \vol_{h_0}(M) \lambda ^{(n+1)/2}\ln \lambda$ as $\lambda \rightarrow +\infty$.
\item If $\beta < \beta_c$ then $N(\lambda ) \sim  \gamma_{n+1} \, \vol_g(X) \, \lambda ^{(n+1)/2}$ as $\lambda \rightarrow +\infty$.
\end{enumerate}
\end{theorem}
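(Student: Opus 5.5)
The plan is to reduce to a model problem in the collar and then run a Dirichlet–Neumann bracketing argument. Away from the boundary, $g$ is a smooth Riemannian metric on a compact region, and the classical Weyl law gives a contribution $\gamma_{n+1}\vol_g(\cdot)\lambda^{(n+1)/2}$.

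\textbf{Normal form in the collar.} First I would put $g$ in normal form. Choose the transverse coordinate $u$ so that $\bar g = du^2 + h_u$ with $h_u\to h_0$ as $u\to 0$; this is the choice of Section \ref{sec:normal}. Next introduce the $g$-distance to the boundary, $x = \int_0^u s^{-\alpha/2}\,ds \propto u^{1-\alpha/2}$. Then $u^{-\alpha}\propto x^{\beta}$, and the metric becomes $g \approx dx^2 + x^{-\beta}h_0$. Conjugate by the half-density $x^{-\beta n/4}$, which is the square root of the volume density. In $L^2(dx\,d\vol_{h_0})$ the Laplacian becomes
$$-\partial_x^2 + \frac{\beta n(\beta n+4)}{16x^2} + x^{\beta}\triangle_{h_0},$$
up to error terms that are small relative to the main terms as $x\to 0$. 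These errors are controlled because $\bar g$ is $C^1$.

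\textbf{Bracketing and partial separation of variables.} Cut $X$ into the region $\{x>\epsilon\}$ and thin collar slabs over small coordinate patches of $M$, and impose Dirichlet or Neumann conditions on the cuts. On each patch, freeze $h_0$ and diagonalize $\triangle_{h_0}$ via Weyl's law on $M$, with eigenvalues $\mu\approx|\xi|^2$. This reduces the model to the family of 1D operators $-\partial_x^2 + c/x^2 + \mu x^\beta$. Scaling $x = \mu^{-1/(\beta+2)}y$ turns each one into $\mu^{2/(\beta+2)}P_1$. Counting pairs $(\xi,k)$ with $|\xi|^{4/(\beta+2)}\lambda_k(P_1)\le\lambda$ gives
$$\sum_k \gamma_n \vol_{h_0}\,\bigl(\lambda/\lambda_k\bigr)^{n(\beta+2)/4} = \gamma_n\vol_{h_0}(M)\,\zeta_{\beta,n}(d_H/2)\,\lambda^{d_H/2}.$$
The series converges exactly when $d_H/2$ exceeds the Weyl exponent of $P_1$. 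Since $P_1$ has eigenvalue growth $\lambda_k\sim k^{2\beta/(\beta+2)}$, this condition is $n(\beta+2)/4 > (\beta+2)/(2\beta)$, i.e.\ $\beta > 2/n$. In that regime $\lambda^{d_H/2}\gg\lambda^{(n+1)/2}$, so the interior contribution is lower order and case 1 follows.

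\textbf{Cases 2 and 3.} When $\beta\le\beta_c$ the series diverges, so the boundary no longer produces a single dominant term. Here I would instead apply the local Weyl law (Dirichlet–Neumann bracketing on dyadic shells $x\sim 2^{-j}$, rescaled to unit size) down to depth $x\gtrsim\lambda^{-1/2}$. Below that depth the Hardy lower bound of Lemma \ref{le:0ev} shows the operator is $\gtrsim\lambda$ on a layer of width $C\lambda^{-1/2}$ at the boundary, so that layer contributes only a negligible count. The result is $N(\lambda)\sim\gamma_{n+1}\vol_g(\{x>\lambda^{-1/2}\})\lambda^{(n+1)/2}$.

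In case 3, $\vol_g$ is finite since $\int x^{-\beta n/2}dx<\infty$ for $\beta<2/n$, so the truncated volume converges to $\vol_g(X)$. In case 2, $\vol_g(\{x>\lambda^{-1/2}\})\sim\vol_{h_0}(M)\int_{\lambda^{-1/2}}x^{-1}dx=\tfrac12\vol_{h_0}(M)\ln\lambda$, which gives the constant $\tfrac12\gamma_{n+1}$.

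\textbf{Main obstacle.} The hardest step is the uniformity of the semiclassical errors. In case 1 one must justify exchanging the limit $\lambda\to\infty$ with the sum over $k$, and show that the perturbation errors from $h_u\ne h_0$ stay $o(\lambda^{d_H/2})$ uniformly. I would handle this with a tail bound on the number of eigenvalues of $P_1$ below a given level, together with the uniform Hardy-type estimate. In case 2 the difficulty is sharper: the dyadic-shell Weyl remainders must be summed over roughly $\ln\lambda$ shells and still be $o(\ln\lambda)$. For this I would use that each rescaled shell has a uniformly smooth geometry, so each shell's relative error tends to $0$ uniformly, which makes the summed error $o(\ln\lambda)$.
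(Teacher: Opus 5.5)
The paper does not reprove this statement (it is quoted from \cite{CDHDT24}), but your route is essentially what its own machinery gives for constant $\beta$: the normal form of Theorem \ref{theo:normal}; in case 1, separation of variables and scaling to $P_1$ with a uniform tail bound on the truncated radial spectra (Lemma \ref{lem_constant-cell}, Proposition \ref{prop:1d-truncation}, Lemma \ref{lem:uniform-1d-Neumann}); and in cases 2--3, the Hardy layer of Lemma \ref{le:0ev} plus Dirichlet--Neumann bracketing down to the wavelength scale, where the paper uses slices of width $C_\varepsilon\lambda^{-1/2}$ with mesoscopic tangential cells in place of your rescaled dyadic shells. One correction: the boundedly many shells at depth $x\sim\lambda^{-1/2}$ have a bounded rescaled spectral parameter, so their relative Weyl error does not tend to $0$ as you claim, but they carry only $\mathrm{O}(\lambda^{\frac{n}{2}+\frac{n\beta}{4}})$ eigenvalues, which is negligible in both cases (and Lemma \ref{le:0ev} gives an empty count only for a layer of width $c\lambda^{-1/2}$ with $c<\pi/2$, not an arbitrary $C$).
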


\begin{remark}\label{rem:regularity-cdhdt}
In our setting, the Minkowski dimension and the Hausdorff dimension agree.
The statement of Theorem \ref{theo:main0} is proved in \cite{CDHDT24} under smoothness assumptions. For our purposes, only the leading terms are used, and they are stable under $C^1$ perturbations and, more generally, under uniform quasi-isometries. In particular, one may smooth the coefficients and transfer the leading asymptotics back to the original metric using Lemmas \ref{lemm:quasi} and \ref{lem:counting-quasi} in Appendix \ref{app:quasi}.
\end{remark}

\subsection{Main results}
We extend this constant-exponent picture to the genuinely variable case. A natural guess is that the asymptotics should be governed by the local Hausdorff dimension $d_H(m)=n\big(1+\frac{\beta(m)}{2}\big)$. However, the constant-exponent asymptotics is not uniform as $\beta\downarrow\beta_c$: the coefficient $A(\beta,n)$ blows up as $\beta\downarrow\beta_c$ (see Section \ref{sec:phase+}). Likewise, in the subcritical regime the coefficient $\gamma_{n+1}\vol_g(X)$ is not uniform as $\beta\uparrow\beta_c$, since the volume diverges at the threshold, already in the model $dx^2+x^{-\beta}h_0$, where $\vol_{g_\beta}((0,1)\times M)=\frac{1}{1-\frac{n\beta}{2}}\vol_{h_0}(M)$. For a variable $\beta(m)$, these two singular behaviors prevent a direct uniform reduction to the constant-exponent case near points where $\beta(m)$ is close to $\beta_c$.

This leads naturally to two regimes. If $\bm>\beta_c$, the leading term comes from the supercritical region $\{\beta>\beta_c\}$ and is obtained by localizing along $M$ and freezing $\beta$ on mesoscopic cells (Theorem \ref{thm:main}, item 1). If $\bm\leq\beta_c$, the relevant scale is the truncated volume, and one uses the Weyl law expressed in terms of $\vol_g(\{\dist(\cdot,M)>\lambda^{-1/2}\})$ (Theorem \ref{thm:main}, item 2), in the spirit of \cite{CPR24}.

\medskip

Recall that $\beta$ and $\bar g$ are of class $C^1$.
We denote by $d\vol_{h_0}(m)$ the volume form on $M$ associated with the metric $h_0$ defined in Section \ref{sec:normal}, and by $\dist(\cdot,M):X\rightarrow\R^+$ the $g$-distance to $M=\partial X$.

Given two quantities $A(\lambda)$, $B(\lambda)$ depending on $\lambda$, we write $A(\lambda)\sim B(\lambda)$ as $\lambda\to+\infty$ if $\lim_{\lambda\to+\infty}\frac{A(\lambda)}{B(\lambda)}=1$.

\begin{theorem}\label{thm:main}
We have two cases, according to whether $\bm> \beta_c$ or $\bm\leq\beta_c$:
\begin{enumerate}
\item If $\bm> \beta_c$ then, taking an arbitrary $\bar{\beta} \in (\beta_c, \bm)$, 
\begin{equation}\label{eq:th_beta_groesser_betac}
N(\lambda )\sim \int_{\{m\in M\,\mid\, \beta(m)\geq\bar{\beta}\}} A(\beta(m),n)\lambda^{\frac{n}{4}(2+\beta(m))}\,d\vol_{h_0}(m)
\end{equation}
as $\lambda \rightarrow +\infty$.
The asymptotics does not depend on the choice of $\bar{\beta}$. 
\item If $\bm\leq  \beta_c$ then 
$$
N(\lambda )\sim \gamma_{n+1}\,\lambda^{\frac{n+1}{2}}\vol_g\big(\{\dist(\cdot,M)\geq\lambda^{-1/2}\}\big)
$$
as $\lambda \rightarrow +\infty$.
\end{enumerate}
\end{theorem}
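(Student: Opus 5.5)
The plan is Dirichlet--Neumann bracketing on a decomposition of the collar adapted both to the variable exponent and to the wavelength scale, combined with the constant-exponent Theorem \ref{theo:main0} on cells where $\beta$ is frozen. Lemmas \ref{lemm:quasi} and \ref{lem:counting-quasi} will absorb the $C^1$ variation of $\bar g$ and of $\beta$ inside each cell.

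\textbf{Case $\bm>\beta_c$.} First, I cover $M$ by mesoscopic cells $M_j$ of diameter $\delta$, with $\delta\to0$ slowly relative to $\lambda$. A natural choice is $\delta=(\ln\lambda)^{-2}$, because then $\lambda^{|\beta(m)-\beta(m_j)|n/4}=\exp(\bigO(\delta\ln\lambda))\to1$ uniformly on $M_j$. On $(0,1)\times M_j$, I impose Dirichlet or Neumann conditions on the lateral sides and on $\{u=1\}$, which gives upper and lower bounds for $N(\lambda)$. On each piece I replace $g$ by $u^{-\alpha_j^\pm}\bar g$, where $\alpha_j^\pm$ is the max or min of $\alpha$ on the cell. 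For $u\ge\lambda^{-C}$ this is a quasi-isometry with constant $1+\bigO(\delta\ln\lambda)$. The very thin layer $u<\lambda^{-C}$ is harmless: the Hardy bound of Lemma \ref{le:0ev} shows that it carries eigenvalues $\gg\lambda$.

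Next, I apply the constant-exponent analysis on each cell: the separated 1D operator $P_1$ and the $\gamma_n$ phase-space count in the $M$ directions. This gives $N_j(\lambda)\approx A(\beta_j,n)\lambda^{d_H(\beta_j)/2}\vol_{h_0}(M_j)$, provided $\beta_j\ge\bar\beta>\beta_c$, where $A$ is continuous and bounded. I need this count uniformly in $j$. Since the number of cells is only polynomial in $\ln\lambda$, it suffices to control the remainder of the local Weyl law uniformly for $\beta\in[\bar\beta,\bm]$ and for cells of size $\delta$. The lateral Dirichlet/Neumann boundaries cost $\bigO(\delta^{-1})$ times a boundary-area term of lower order $\lambda^{(d_H-\beta/2\cdot\ldots)}$. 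I will check that the cost $\delta^{-1}\lambda^{d_H/2-1/2\cdot(\ldots)}$ is negligible, which is why $\delta$ must decay only logarithmically.

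Then I handle the region $\{\beta<\bar\beta\}$ and the interior. By monotonicity under a quasi-isometric comparison with a constant exponent $\bar\beta'$ slightly above, these contribute at most $\bigO(\lambda^{d_H(\bar\beta')/2}+\lambda^{(n+1)/2}\ln\lambda)$. That is $o$ of the main term, because the main integral is at least $c\lambda^{d_H(\bm-\epsilon)/2}$ over a set of positive measure near the maximum. Summing over cells gives a Riemann sum of the integral in \eqref{eq:th_beta_groesser_betac}. The independence of the result from $\bar\beta$ follows from the same domination.

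\textbf{Case $\bm\le\beta_c$.} Here I use the truncated-volume Weyl law in the spirit of \cite{CPR24}. I split $X$ at $\dist(\cdot,M)=\epsilon\lambda^{-1/2}$ and at $K\lambda^{-1/2}$ for small $\epsilon$ and large $K$. On $\{\dist\ge K\lambda^{-1/2}\}$, a rescaled local Weyl law holds with relative error $o(1)$ as $K\to\infty$: after rescaling, each Whitney-type ball of size comparable to its distance to $M$ is in the semiclassical regime. Summing these local laws gives $\gamma_{n+1}\lambda^{(n+1)/2}\vol_g$ of that region. The layer $\{\dist<\epsilon\lambda^{-1/2}\}$ carries no eigenvalues below $\lambda$ by the Hardy bound. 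The intermediate layer contributes $\bigO(\lambda^{(n+1)/2}\vol_g(\{\dist\le K\lambda^{-1/2}\}\setminus\{\dist\le\epsilon\lambda^{-1/2}\}))$ by Neumann bracketing.

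Since $\vol_g(\{\dist\le r\})\approx\int_M r^{(1-n\beta/2)\cdot c}$, the ratio of this layer volume to the truncated volume tends to $0$ once $\bm\le\beta_c$. This is where subcriticality, or criticality with its $\ln$ divergence, enters. Comparing the truncated volumes at $K\lambda^{-1/2}$ and at $\lambda^{-1/2}$ then finishes the argument.

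\textbf{Main obstacle.} The hard step is the uniformity of the constant-exponent asymptotics in the first case, including the remainder and the lateral-boundary error, across the cells. Theorem \ref{theo:main0} is only stated as a leading asymptotics. I would first try to re-derive it with explicit error bounds via the separation-of-variables reduction to $P_1$ on product model cells. The variable-$\beta$ dependence would then enter only through the explicit scaling $\lambda^{d_H/2}$ and the continuous $\zeta$-function value.
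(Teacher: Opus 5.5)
Your case $\bm>\beta_c$ follows the paper's architecture. You freeze $\beta$ and $h_0$ on tangential cells, which is legitimate because the cell size times $\ln\lambda$ tends to $0$. You then use a Hardy cut near $M$, bracketing, separation of variables onto $P_1$, domination of $\{\beta<\bar\beta\}$, and a Riemann sum. \textbf{The genuine gap is that you leave open the step that carries the proof, and your sketch of it is misdirected.}

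The uniform frozen-cell count is Lemma~\ref{lem_constant-cell}, and it needs no remainder estimate for Theorem~\ref{theo:main0}. After separation of variables, the eigenvalues on a cell are $\mu_j(T)^{2/(\beta+2)}\nu_k(\beta,a_j,L_j)$, where $(a_j,L_j)$ is the rescaled radial interval. The paper splits the count in two.
\begin{itemize}
\item For the head $k\le K$: the truncated one-dimensional eigenvalues converge to $\nu_k(\beta)$ uniformly in $\beta\in[\bar\beta,\bm]$ (Proposition~\ref{prop:1d-truncation}). Here $L_j\to\infty$ because $\mu_1(T)\gtrsim(\text{mesh})^{-2}$. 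This is combined with a tangential Weyl law made uniform by a finite family of cell shapes.
\item For the tail $k>K$: the ingredient you are missing is the bound $\nu_k\geq C^{-1}k^{2\beta/(\beta+2)}$, uniform over \emph{all} intervals (Lemma~\ref{lem:uniform-1d-Neumann}). It forces $\mu_j(T)\le C\lambda^{(\beta+2)/2}k^{-\beta}$ for contributing pairs. Hence the tail is at most $C|T|\lambda^{d_H/2}\sum_{k>K}k^{-n\beta/2}$, which is uniformly small since $n\bar\beta/2>1$.
\end{itemize}
Without this mechanism the placeholders in your lateral-boundary estimate cannot be filled. The lateral faces are not what constrains the mesh either. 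The tangential energies involved are of order $\lambda^{(\beta+2)/2}$, so a polynomial mesh $\lambda^{\gamma-1/2}$ works. The real constraints are $(\text{mesh})\cdot\ln\lambda\to0$ and controlled cell shapes (Freudenthal subdivision), which your ``cells'' do not yet provide.

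Two further corrections in this case.
\begin{itemize}
\item $dx^2+x^{-\beta}h_0$ and $dx^2+x^{-\bar\beta'}h_0$ are not quasi-isometric near $x=0$. The comparison on $\{\beta<\bar\beta\}$ is instead the operator inequality $-\partial_x^2+C(n,\beta)x^{-2}+x^{\beta}\triangle_T\geq-\partial_x^2+x^{\bar\beta}\triangle_T$ for $x<1$, in conjugated variables on frozen cells.
\item At your cut $x=a=\lambda^{-C'}$ with $C'>\frac12$, the decoupling must be done for $\psi=x^{-n\beta/4}f$, i.e.\ with a Neumann condition for the radial operator $P_1$. That is how the radial problems in the proof of Lemma~\ref{lem_constant-cell} are set up; you cannot impose Neumann on $f$ itself. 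For $\beta>\beta_c$ the density $x^{-n\beta/2}$ is not integrable at $0$. Take $\phi_j(y)\chi(x)$, with $\phi_j$ the tangential eigenfunctions, $\chi\equiv1$ on $[a,Ra]$, and $\chi$ cut off on $[Ra,2Ra]$. This function carries its mass at the cut, where $x^\beta$ is tiny, and its Rayleigh quotient is $\lesssim a^{-2}R^{-1-n\beta/2}+\mu_j a^{\beta}R^{\max(0,1+\beta-n\beta/2)}\ln R$. Optimizing in $R$ gives $\gg\lambda^{d_H/2}$ geometric-Neumann eigenvalues below $\lambda$ once $a\ll\lambda^{-1/2}$, so that bracketing yields a useless upper bound.
\end{itemize}
In conjugated form the term $\frac{\beta n(\beta n+4)}{16x^2}$ rules this out. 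Your deep cut then becomes an advantage: the rescaled left endpoints are $\lesssim\lambda^{\frac12-C'}\to0$, so the paper's auxiliary $c_\varepsilon$ is unnecessary.

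For $\bm\le\beta_c$ your route genuinely differs from the paper's and is sound. The paper slices $[c\lambda^{-1/2},b]$ into slabs of fixed thickness $C_\varepsilon\lambda^{-1/2}$ times mesoscopic simplices. It freezes $x^{-\beta}$ on each slab, applies Lemma~\ref{le:uniform} at spectral parameter $C_\varepsilon^2$, and then moves the cut to $\lambda^{-1/2}$ by Lemma~\ref{lemm:uniform}. You instead use boxes of size comparable to the distance on $\{x\geq K\lambda^{-1/2}\}$. On the layer $\{\epsilon\lambda^{-1/2}\le x\le K\lambda^{-1/2}\}$ you use only a crude $\bigO_{K,\epsilon}(\lambda^{(n+1)/2}\vol_g(\text{layer}))$ Neumann bound.

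That layer is negligible because $\int_M\int_{\epsilon\lambda^{-1/2}}^{K\lambda^{-1/2}}x^{-n\beta(m)/2}\,dx\,d\vol_{h_0}$ is $\mathrm{o}$ of the truncated volume, uniformly in $m$. This is the computation of Lemma~\ref{lemm:uniform}, and it is where $\bm\le\beta_c$ enters; it should replace your formula $\int_M r^{(1-n\beta/2)\cdot c}$.

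Your version is more careful on one point. On the paper's first slabs, $x$ varies by a factor $1+C_\varepsilon/c$, so freezing $x^{-\beta}$ there is not an $\varepsilon$-quasi-isometry, and an argument like your layer bound is what disposes of them. The paper's fixed-thickness slabs, in turn, make the finite-shape bookkeeping immediate. Your Whitney boxes still need it: dyadic layers in $x$, and a tangential mesh of $h_0$-size $\theta x^{1+\beta/2}$ built cell by cell, since $\beta$ varies.
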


When $\beta$ is constant, we recover the result of Theorem \ref{theo:main0}, as expected.
In \eqref{eq:th_beta_groesser_betac}, note that $\frac{n}{4}(2+\beta(m))=d_H(\beta(m))/2$, where $d_H(\beta)=n(1+\beta/2)$. 

It is likely that Theorem \ref{thm:main} remains valid under lower regularity assumptions on $\bar g$ and $\alpha$, for instance if $\bar g$ is continuous and $\alpha$ is H\"older continuous. Indeed, the differentiability of $\alpha$ is only used later in the proof of Theorem \ref{theo:normal}. We leave this extension as an open problem.

\medskip
When $\beta$ is of class $C^2$ near its maxima and non-degenerate in the sense of Morse-Bott, we have the following corollary.

\begin{corollary}\label{co:codimension}
Assume that $\beta\in C^2(M)$ achieves its maximum $\bm$ on a compact submanifold
$W = \{m\in M\ \mid\ \beta(m)=\bm\}$ of codimension $d$. Assume moreover that $\beta$ is Morse-Bott along $W$, that is, the restriction of $-\mathrm{Hess}(\beta)$ to the normal bundle $NW$ is positive definite. For $w\in W$, denote by
$$
Q(w)=-\mathrm{Hess}(\beta)(w)\vert_{N_wW}
$$
this positive definite quadratic form, and define $\det Q(w)$ using the $h_0$-orthonormal identification of $N_wW$ with $\R^d$.
\begin{enumerate}
\item If $\bm>\beta_c=\frac{2}{n}$, then
$$
N(\lambda)\sim C_W\lambda^{\frac{n}{2}+\frac{n}{4}\bm}(\ln\lambda)^{-\frac{d}{2}},
\qquad
C_W=A(\bm,n)(2\pi)^{\frac{d}{2}}\Big(\frac{n}{4}\Big)^{-\frac{d}{2}}
\int_W \frac{d\vol_{h_0\vert_W}(w)}{\sqrt{\det Q(w)}}.
$$
\item If $\bm=\beta_c=\frac{2}{n}$, then
\begin{itemize}
\item if $d=1$,
$$
N(\lambda)\sim C_W\lambda^{\frac{n+1}{2}}\sqrt{\ln\lambda},
\qquad
C_W=\gamma_{n+1}\,2\sqrt{\frac{2\pi}{n}}\int_W \frac{d\vol_{h_0\vert_W}(w)}{\sqrt{\det Q(w)}},
$$
\item if $d=2$,
$$
N(\lambda)\sim C_W\lambda^{\frac{n+1}{2}}\ln(\ln\lambda),
\qquad
C_W=\gamma_{n+1}\,\frac{4\pi}{n}\int_W \frac{d\vol_{h_0\vert_W}(w)}{\sqrt{\det Q(w)}},
$$
\item if $d>2$,
$$
N(\lambda)\sim\gamma_{n+1}\vol_g(X)\lambda^{\frac{n+1}{2}}.
$$
\end{itemize}
\item If $\bm<\beta_c=\frac{2}{n}$, then $\vol_g(X)<+\infty$ and
$$
N(\lambda)\sim\gamma_{n+1}\,\vol_g(X)\,\lambda^{\frac{n+1}{2}}.
$$
\end{enumerate}
\end{corollary}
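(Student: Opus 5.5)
The corollary follows from Theorem \ref{thm:main} by Laplace-type asymptotics of the explicit integrals, after localizing near $W$ with Morse–Bott normal coordinates. Near $W$ we use $h_0$-Fermi coordinates $m=(w,y)$, with $w\in W$ and $y\in N_wW\simeq\R^d$ an $h_0$-orthonormal fiber coordinate. In these coordinates $d\vol_{h_0}=(1+\bigO(|y|))\,d\vol_{h_0\vert_W}(w)\,dy$, and by Taylor's formula $\beta(w,y)=\bm-\frac12 Q(w)(y,y)+o(|y|^2)$ uniformly in $w$, since $\beta\in C^2$ and $W$ is compact. Away from any fixed tubular neighborhood of $W$ we have $\beta\leq\bm-\delta$, so those regions contribute at a polynomially smaller order $\lambda^{-n\delta/4}$ in case 1. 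They also contribute a smaller order in the truncated-volume computations below.

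\emph{Case 1.} Apply Theorem \ref{thm:main}, item 1, with $\bar\beta$ close to $\bm$. Writing $L=\ln\lambda$, the integrand is
\[
A(\beta,n)\lambda^{\frac n2+\frac n4\bm}\exp\Big(-\tfrac{n}{8}L\,Q(w)(y,y)(1+o(1))\Big).
\]
Since $A(\cdot,n)$ is continuous on $(\beta_c,\infty)$, we may replace $A(\beta,n)$ by $A(\bm,n)$ near $W$. The Gaussian integral in $y$ over a shrinking neighborhood $|y|\le L^{-1/2+\epsilon}$ gives $(2\pi)^{d/2}\,(\tfrac n4 L)^{-d/2}\det Q(w)^{-1/2}$. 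The outside part $|y|\ge L^{-1/2+\epsilon}$ is $O(\lambda^{-cL^{2\epsilon-1}\cdot L})=O(e^{-cL^{2\epsilon}})$ relative, hence negligible. This yields $C_W$.

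\emph{Cases 2 and 3.} Apply Theorem \ref{thm:main}, item 2, so it suffices to compute $V(\lambda)=\vol_g(\{\dist(\cdot,M)\ge\lambda^{-1/2}\})$. In the collar, the $g$-distance to $M$ is $r=\frac{2}{2-\alpha}u^{(2-\alpha)/2}(1+o(1))$. In the normal form of Section \ref{sec:normal} the metric is comparable to $dr^2+r^{-\beta}h_0$ up to factors $1+o(1)$, so the volume density is $r^{-n\beta/2}\,dr\,d\vol_{h_0}$. Lemmas \ref{lemm:quasi} and \ref{lem:counting-quasi} let us absorb the $1+o(1)$ factors, and only divergent parts matter. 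We get
\[
V(\lambda)=\int_M\int_{\lambda^{-1/2}}^{1} r^{-n\beta(m)/2}\,dr\,d\vol_{h_0}(m)+\bigO(1).
\]
If $\bm<\beta_c$, the integral is bounded and converges to $\vol_g(X)$, which gives case 3. If $\bm=\beta_c$, then $n\beta/2=1-\frac n4 Q(y,y)+o(|y|^2)$. Set $s=-\ln r\in[0,L/2]$. The inner integral becomes
\[
\int_0^{L/2}e^{-\frac n4 Q(y,y)s(1+o(1))}\,ds .
\]
Integrating over $y\in\R^d$ first gives $(4\pi/n)^{d/2}\det Q^{-1/2}\,s^{-d/2}$. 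Then $\int_1^{L/2}s^{-d/2}ds$ behaves as follows.
\begin{itemize}
\item For $d=1$ it is $\sim2\sqrt{L/2}=\sqrt{2L}$, giving $2\sqrt{\pi/n}\cdot\sqrt{2}=2\sqrt{2\pi/n}$, which matches.
\item For $d=2$ it is $\sim\ln L=\ln\ln\lambda$, with constant $4\pi/n$, which matches.
\item For $d>2$ it is bounded, so $\vol_g(X)<\infty$ and $V\to\vol_g(X)$.
\end{itemize}

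The main obstacle is justifying the interchange in case 2: we use the Gaussian in $y$ at fixed $s$ and must control the $o(|y|^2)$ error uniformly over the range $s\le L/2$. We handle this by splitting $|y|\le\eta$ and using the two-sided bounds $(1\pm\eta')\frac n4Q$ for the exponent there, then letting $\eta\to0$ after $\lambda\to\infty$. The region $|y|>\eta$ contributes $\bigO(1)$, since there $n\beta/2\le1-c$. The small-$s$ range $s\le1$ contributes $\bigO(1)$ and is negligible against $\sqrt L$ and $\ln L$. Finally, we must check that the lower-order corrections to the density, i.e.\ the $\bigO(|y|)$ Jacobian and the $o(1)$ in $r$, multiply the main term by $1+o(1)$ and do not merely shift it additively. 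We get this by bounding them by $1+\eta'$ on the relevant region.
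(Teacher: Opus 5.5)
Your proposal is correct and follows the paper's proof essentially step by step. For $\bm>\beta_c$ it uses Theorem \ref{thm:main}, item 1, together with a Morse--Bott Laplace computation with $\tau=\frac{n}{4}\ln\lambda$; for $\bm\leq\beta_c$ it uses Theorem \ref{thm:main}, item 2, with the truncated volume rewritten via $t=\vert\ln x\vert$ as $\int\!\int e^{-\frac{n}{2}(\bm-\beta(m))t}\,dt\,d\vol_{h_0}(m)$ at criticality, then Gaussian integration in the normal directions giving $(4\pi/n)^{d/2}t^{-d/2}\det Q(w)^{-1/2}$, then the elementary $t$-integral up to $\frac{1}{2}\ln\lambda$. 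The one real difference is that you run the Laplace method by hand, with two-sided $(1\pm\eta')Q$ bounds and $\eta\to0$ taken after $\lambda\to+\infty$; this fits the $C^2$ hypothesis better than the paper's appeal to the $C^\infty$ Proposition \ref{prop:gauss} and to an $\mathrm{O}(\Vert z\Vert^3)$ remainder.
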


Corollary \ref{co:codimension} is proved in Section \ref{proof_cor_co:codimension} by using the Laplace method (see Appendix \ref{app:gauss}) to compute the integrals involved in Theorem \ref{thm:main}.

\paragraph{Novelty and comparison with the literature.}
For a constant exponent $\alpha\equiv\alpha_0$ (equivalently $\beta\equiv\beta_0$), the Weyl asymptotics of $\triangle_g$ is known and exhibits three regimes separated by the critical value $\beta_c=\frac{2}{n}$; see \cite{CDHDT24}. A general mechanism expressing Weyl laws in terms of truncated volumes for singular manifolds was developed in \cite{CPR24}.

The present paper treats the genuinely variable exponent situation: the singularity order changes along $M$ and the local Hausdorff dimension $d_H(m)=n\big(1+\frac{\beta(m)}{2}\big)$ is no longer constant. We show that, when $\bm>\beta_c$, the eigenvalue counting function is governed by the region where $\beta$ is close to $\bm$ and satisfies an integral Weyl law (Theorem \ref{thm:main}, item 1). In the subcritical and critical range $\bm\leq\beta_c$, the Weyl law is governed by the truncated volume $\vol_g(\{\dist(\cdot,M)>\lambda^{-1/2}\})$ (Theorem \ref{thm:main}, item 2), leading in particular to logarithmic corrections at $\bm=\beta_c$ when the maximum set is Morse-Bott (Corollary \ref{co:codimension}). The boundary-dominated supercritical regime may be viewed as an effective-dimension phenomenon, conceptually close to non-classical Weyl laws for degenerate or hypoelliptic operators where the spectral growth is dictated by an anisotropic dimension (see, e.g., \cite{MS78}). 
The constant-exponent case has a parallel weighted-domain history going back to \cite{VS74} in which Weyl asymptotics was established for second-order elliptic operators degenerating on the boundary of a smooth domain $\Omega\subset\R^n$ with a fixed degeneracy order. Our framework is intrinsic and geometric: the singularity is encoded as a Riemannian metric degeneration along a boundary submanifold $M$, treated through the Friedrichs Laplacian of the singular metric. The variable-exponent regime $\alpha=\alpha(m)$ studied here lies outside the framework of~\cite{VS74}, and to our knowledge has not been treated previously in this Weyl-law setting.

\paragraph{Examples and motivation.}
Variable boundary degeneracy exponents arise naturally in several geometric and physical settings.

\smallskip
\noindent
\emph{Variable-exponent Grushin operators.} The standard Grushin operator $\mathcal G_k=-\partial_x^2-x^{2k}\triangle_y$ on $(0,1)\times\mathbb{T}^n$, with $k>0$ fixed, is unitarily equivalent (in the model regime) to the Friedrichs Laplacian of the singular metric $dx^2+x^{-2k}\vert dy\vert^2$, that is, $g_\beta$ with $\beta=2k$, equivalently $\alpha=2k/(k+1)$. The natural variable-exponent generalization $\mathcal G_{k(\cdot)}=-\partial_x^2-x^{2k(y)}\triangle_y$, where $k:\mathbb{T}^n\to(0,+\infty)$ is smooth, corresponds precisely to a metric of the form $u^{-\alpha(y)}\bar g$ with $\alpha(y)=2k(y)/(k(y)+1)$ varying along the singular stratum $M=\{u=0\}$. Such operators are natural in models of variable subellipticity where the order of vanishing of the transverse vector fields depends on the position along the characteristic manifold.

\smallskip
\noindent
\emph{Acoustic modes in gas giants with non-uniform surface conditions.} In the model studied in~\cite{CDHDT24}, the constant exponent $\alpha$ encodes a fixed polytropic equation of state at the surface of the planet. When the thermodynamic profile or chemical composition of the boundary layer varies with latitude or longitude (as it does in any realistic atmospheric model) the equation of state varies along the surface, producing a boundary-dependent degeneracy order $\alpha=\alpha(m)$. The constant-exponent assumption of~\cite{CDHDT24} is thus an idealization; the variable-exponent setting addressed here is the natural framework for such physical models, and a prerequisite for extracting spectral signatures of non-uniform boundary structure.

\smallskip
\noindent
\emph{Sub-Riemannian and singular geometries with variable degeneracy.} On a compact manifold equipped with a regular sub-Riemannian structure away from a singular stratum $M$ at which the distribution becomes non-equiregular, the order of degeneracy of the intrinsic sub-Laplacian transverse to $M$ may depend on the boundary point when the step of the distribution varies along $M$. While the precise identification with our framework requires a careful unitary equivalence at the level of weighted spaces, the variable-exponent regime is the natural spectral counterpart of variable-step degeneracy along a singular stratum.

\subsection{Phase transition at the critical exponent $\beta=\beta_c$}\label{sec:phase+}
This subsection is purely interpretative and is not used in the sequel. Its purpose is to clarify the borderline case $\beta=\beta_c$ in the constant-exponent model and to contextualize the logarithmic correction in Theorem \ref{theo:main0} and in Corollary \ref{co:codimension}.

\paragraph{Constant exponent model, pole of $A(\beta,n)$, and emergence of $\ln\lambda$.}
Consider the constant exponent metric $g_\beta=dx^2+x^{-\beta}h_0$ on $(0,1)\times M$, with $\dim M=n$. Theorem \ref{theo:main0} shows that the order of growth of $N_{g_\beta}(\lambda)$ changes at $\beta_c=\frac{2}{n}$: it is classical for $\beta<\beta_c$, boundary-dominated and non-classical for $\beta>\beta_c$, and exhibits a logarithmic factor at $\beta=\beta_c$. In particular, the supercritical asymptotic cannot be uniform as $\beta\downarrow\beta_c$.

A convenient way to express the compatibility between the supercritical and critical formulas is to use the fact, proved in \cite{CDHDT24}, that the supercritical coefficient $A(\beta,n)$ has a simple pole at $\beta_c$, namely,
\begin{equation}\label{pole-A}
A(\beta,n)=\frac{2}{n}\frac{\gamma_{n+1}}{\beta-\beta_c}+\mathrm{O}(1),
\end{equation}
as $\beta\downarrow\beta_c$. This follows from the zeta-function representation $A(\beta,n)=\gamma_n\,\zeta_{\beta,n}(d_H/2)$ and the meromorphic continuation of $\zeta_{\beta,n}$.

Write $d_H=n(1+\frac{\beta}{2})$, so that $\frac{d_H}{2}=\frac{n+1}{2}+\frac{n}{4}(\beta-\beta_c)$.
For every fixed $\beta>\beta_c$, one may replace $\lambda^{d_H/2}$ by $\lambda^{d_H/2}-\lambda^{(n+1)/2}$ in the supercritical Weyl law, since $\lambda^{(n+1)/2}=\mathrm{o}(\lambda^{d_H/2})$ as $\lambda\to+\infty$. For every fixed $\lambda>1$, we have
$$
\frac{\lambda^{d_H/2}-\lambda^{(n+1)/2}}{\beta-\beta_c}
=\lambda^{(n+1)/2}\frac{\lambda^{\frac{n}{4}(\beta-\beta_c)}-1}{\beta-\beta_c}
\longrightarrow \frac{n}{4}\lambda^{(n+1)/2}\ln\lambda
$$
as $\beta\downarrow\beta_c$. Together with \eqref{pole-A}, this yields, at the level of leading terms,
$$
A(\beta,n)\big(\lambda^{d_H/2}-\lambda^{(n+1)/2}\big)
\longrightarrow \frac{\gamma_{n+1}}{2}\lambda^{(n+1)/2}\ln\lambda,
$$
which matches the critical Weyl law in Theorem \ref{theo:main0} (after multiplying by $\vol_{h_0}(M)$).

\paragraph{Variable exponent and spectral transition.}
If $\bm>\beta_c$, the leading term in Theorem \ref{thm:main}, item 1, is produced by the region where $\beta$ is close to $\bm$. The pole \eqref{pole-A} shows that the contribution of points where $\beta$ is close to $\beta_c$ is delicate and justifies the regime splitting at $\beta_c$ in Theorem \ref{thm:main}.
When $\bm\leq\beta_c$, the supercritical region is empty and the relevant quantity becomes the truncated volume $\vol_g(\{\dist(\cdot,M)>\lambda^{-1/2}\})$ (Theorem \ref{thm:main}, item 2). In the critical case $\bm=\beta_c$, this truncated volume diverges slowly, leading to the logarithmic term described in Theorem \ref{theo:main0}, item 2.

Approaching the threshold $\beta_c$ from below yields a pole coming from the volume. Indeed, for the constant exponent model,
$$
\vol_{g_\beta}((0,1)\times M)=\int_0^1\int_M x^{-\frac{n\beta}{2}}\,d\vol_{h_0}(m)\,dx
=\frac{1}{1-\frac{n\beta}{2}}\vol_{h_0}(M),
$$
which diverges like $\frac{2}{n}(\beta_c-\beta)^{-1}\vol_{h_0}(M)$ as $\beta\uparrow\beta_c$. In the variable exponent setting, the critical behavior is governed by the geometry of $\{\beta=\beta_c\}$, which may produce $\sqrt{\ln\lambda}$ or $\ln\ln\lambda$ corrections (Corollary \ref{co:codimension}, item 2).

\begin{remark}[A separation-of-variables heuristic on the infinite cone]
Let $(\mu_j)_{j\geq 1}$ be the eigenvalues of $\triangle_M$ on $(M,h_0)$. On the infinite cone $M\times(0,+\infty)$ with metric $g_\beta$, separation of variables reduces the analysis to the one-dimensional Schr\"odinger operators $P_{\mu_j}=-\partial_x^2+C(n,\beta)x^{-2}+\mu_j x^\beta$.
If $(E_{j,k})_{k\geq 1}$ denotes the spectrum of $P_{\mu_j}$, the scaling $x=\mu_j^{-\frac{1}{\beta+2}}s$ gives $E_{j,k}=\mu_j^{\frac{2}{\beta+2}}\nu_k$, where $(\nu_k)_{k\geq 1}$ is the spectrum of $P_1=-\partial_x^2+C(n,\beta)x^{-2}+x^\beta$. 
Therefore, at a heuristic level,
$$
N(\lambda)\sim \sum_{k\geq 1}\#\bigg\{j\geq 1 \ \ \big\vert\ \ \mu_j\leq \left(\frac{\lambda}{\nu_k}\right)^{\frac{\beta+2}{2}}\bigg\} ,
$$
as $\lambda\to+\infty$. Using the Weyl law on $(M,h_0)$,
$$
\#\{j\geq 1\,\mid\, \mu_j\leq \Lambda\}\sim \gamma_n\vol_{h_0}(M)\Lambda^{\frac{n}{2}},
$$
as $\Lambda\to+\infty$, we obtain
$$
N(\lambda)\sim 
\gamma_n\vol_{h_0}(M)\,\lambda^{\frac{d_H}{2}}\sum_{k\geq 1}\nu_k^{-\frac{d_H}{2}},
$$
as $\lambda\to+\infty$, with $d_H=n(1+\frac{\beta}{2})$.
Finally, one-dimensional semiclassical analysis, equivalently the WKB or Bohr-Sommerfeld formula, gives $\nu_k\sim c\,k^{\frac{2\beta}{\beta+2}}$ as $k\to+\infty$, hence $\nu_k^{-\frac{d_H}{2}}\sim c'\,k^{-\frac{n\beta}{2}}$.
The exponent $\frac{n\beta}{2}$ crosses $1$ exactly at $\beta=\beta_c=\frac{2}{n}$.
This explains heuristically why the series converges for $\beta>\beta_c$, diverges logarithmically at $\beta=\beta_c$, and diverges polynomially for $\beta<\beta_c$.
\end{remark}


\section{Proof of Theorem \ref{thm:main}}

\paragraph{Strategy of the proof.}
We first reduce the geometry near the boundary to a model form. By Theorem \ref{theo:normal} (Section \ref{sec:normal}), in a collar neighborhood $X_a\simeq(0,a)\times M$ the metric $g$ is quasi-isometric to $g_0=dx^2+x^{-\beta(m)}h_0$, where $x\sim\dist_g(\cdot,M)$ and $h_0$ is a $C^1$ metric on $M$. By quasi-isometry stability (Appendix \ref{app:quasi}), it suffices to establish the Weyl asymptotics for $g_0$.

The argument then splits at the critical value $\beta_c=\frac{2}{n}$.

If $\bm>\beta_c$, the spectrum is dominated by the boundary neighborhood. We keep a fixed collar $X_b$ and localize tangentially, that is, on $M$, by a mesh of simplices of size $\eta=\lambda^{\gamma-\frac{1}{2}}$ with $\gamma\in(0,\frac{1}{2})$. On each simplex, $\beta$ and $h_0$ may be frozen (Proposition \ref{prop:normal-quasi} in Section \ref{sec:normal} and Lemma \ref{lem:freezing-simplex} in Section \ref{sec_bm>betac}), so the local counting function reduces to the constant exponent model. Summing over the simplices yields the integral Weyl law of Theorem \ref{thm:main}, item 1.

If $\bm\leq\beta_c$, the leading term has classical order and is governed by the truncated volume $\vol_g(\{\dist(\cdot,M)>\lambda^{-1/2}\})$ (Theorem \ref{thm:main}, item 2). In the critical case $\bm=\beta_c$, this produces logarithmic corrections. Following the truncated volume approach of \cite{CPR24}, we also localize in the transverse variable at the wavelength scale: after choosing $C_\varepsilon>0$ large enough, we set $\zeta=C_\varepsilon\lambda^{-1/2}$ and slice the collar into $x$-intervals of length $\zeta$ on which the coefficient $x^{-\beta(m)}$ is essentially constant. Standard Weyl estimates on the resulting boxes may then be summed.

\paragraph{Small parameters used in the proof.}
The small parameters appear in the proof in a fixed order. We first fix $\varepsilon\in(0,1)$, which determines all quasi-isometry constants, bracketing errors, and the mesoscopic constant $C_\varepsilon$ in the definition $\zeta=C_\varepsilon\lambda^{-1/2}$. Then, for each large $\lambda$, we set $\eta=\lambda^{\gamma-\frac{1}{2}}$ with a fixed $\gamma\in(0,\frac{1}{2})$, and we use the hierarchy
$$
\zeta=C_\varepsilon\lambda^{-1/2}\ll\eta=\lambda^{\gamma-\frac{1}{2}}\ll1,
$$
together with $\eta\vert\ln\zeta\vert\to0$ as $\lambda\to+\infty$, to justify the successive freezing and quasi-isometric reductions. The latter is a consequence of the choice of scales: since $\eta\vert\ln\zeta\vert=\lambda^{\gamma-\frac12}\,\vert\ln(C_\varepsilon\lambda^{-1/2})\vert\sim\frac12\lambda^{\gamma-\frac12}\ln\lambda$ and $\gamma<\frac12$, this tends to $0$.

\subsection{Quasi-isometric normal form} \label{sec:normal}

\begin{theorem}\label{theo:normal} 
There exists a unique $C^1$ Riemannian metric $h_0$ on $M$ such that, if $g_0 = dx^2 +x^{-\beta} h_0 $ in a collar neighborhood $X_a = (0,a]_x\times M$ of $M$, then, for every $\varepsilon >0 $, there exists $b\leq a $ so that $g$ expressed in the coordinates $(x,m)$ and $g_0$ are $\varepsilon$-quasi-isometric (see Appendix \ref{app:quasi} for the definition) in $X_b = (0,b]_x\times M$.
Moreover $(1-\varepsilon )x \leq \dist(\cdot,M) \leq (1+\varepsilon) x$. 
\end{theorem}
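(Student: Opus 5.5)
We first construct $h_0$ and the coordinate $x$ by hand and then check the quasi-isometry in a thin collar.

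\textbf{Step 1: normalize the transverse coordinate.} Following the constant-exponent construction of \cite{CDHDT24}, we choose a boundary defining function $u$ that makes $\bar g$ diagonal at the boundary. Using the $\bar g$-geodesic flow normal to $M$ (this is $C^1$ enough because $\bar g\in C^1$; otherwise we take the gradient flow of a $C^1$ defining function), we get a collar $[0,a)_u\times M$. In this collar,
$$\bar g = \rho(u,m)^2\,du^2 + k_u + \text{cross terms that vanish at } u=0,$$
with $\rho(0,m)=1$ after rescaling $u$ by a $C^1$ positive function of $m$. Here $k_u$ is a family of metrics on $M$ depending continuously on $u$.

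\textbf{Step 2: define $x$ and $h_0$.} We set
$$x=\int_0^u s^{-\alpha(m)/2}\rho(s,m)\,ds = \frac{2}{2-\alpha(m)}\,u^{\frac{2-\alpha(m)}{2}}\,(1+o(1)).$$
This gives $u = c(m)\,x^{2/(2-\alpha)}(1+o(1))$ with $c(m)=\big(\tfrac{2-\alpha}{2}\big)^{2/(2-\alpha)}$. The radial part of $g$ then becomes exactly $dx^2$, up to the $dx\,dm$ terms discussed below. The tangential part is
$$u^{-\alpha}k_u = c(m)^{-\alpha}\,x^{-2\alpha/(2-\alpha)}\,k_0\,(1+o(1)) = x^{-\beta}\,c^{-\alpha}k_0\,(1+o(1)).$$
This forces $h_0 = c(m)^{-\alpha(m)}\,k_0$, which is $C^1$ since $\alpha\in C^1$. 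Uniqueness follows because two metrics $x^{-\beta}h_0$ and $x^{-\beta}h_0'$ that are $\varepsilon$-quasi-isometric for every $\varepsilon$ satisfy $h_0=h_0'$.

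\textbf{Step 3: control the cross terms (main obstacle).} Because $\alpha$ varies with $m$, the differential $dx$ contains a term $\partial_m x\,dm$. A computation gives
$$\partial_m x = \partial_m\alpha \cdot \bigO\big(x\,|\ln x|\big).$$
We compare the resulting cross terms and the $(\partial_m x)^2\,dm^2$ term with the tangential scale $x^{-\beta}|dm|^2$. By Cauchy--Schwarz,
$$|dx\,dm \text{ term}| \lesssim x\,|\ln x| \cdot |dx|\,|dm| \leq \delta\, dx^2 + \delta^{-1}x^2\ln^2 x\,|dm|^2,$$
and $x^2\ln^2x\ll x^{-\beta}$ as $x\to0$ since $\beta\geq0$. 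The cross terms from $\bar g$ that vanish at $u=0$ are handled the same way: they are $o(1)\,du\,dm$, and we compare them with $\sqrt{u^{-\alpha}\rho^2du^2\cdot u^{-\alpha}|dm|^2}$. All these errors are therefore $o(1)$ relative to $g_0$ uniformly in $m$, because $\alpha\in C^1$ on compact $M$ and $\max\alpha<2$. Choosing $b$ small makes the total relative error at most $\varepsilon$. This is the step where the differentiability of $\alpha$ is used.

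\textbf{Step 4: distance estimate.} Curves $x\mapsto(x,m)$ have $g$-length within a factor $1\pm\varepsilon$ of $x$. This gives $\dist(\cdot,M)\leq(1+\varepsilon)x$. For the other inequality, the quasi-isometry yields $g\geq(1-\varepsilon)^2dx^2$ on $X_b$, so any curve from the point to $M$ has length at least $(1-\varepsilon)x$. For curves leaving $X_b$, the length is at least $(1-\varepsilon)b\geq(1-\varepsilon)x$.
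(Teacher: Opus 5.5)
Your proposal is correct and follows essentially the paper's argument. You set $x\simeq\frac{2A(m)}{2-\alpha(m)}u^{1-\alpha(m)/2}$, use $\partial_m x=\mathrm{O}(x\vert\ln x\vert)$ to make the $dx\,dm$ terms negligible against $x^{-\beta}h_0$, read off $h_0=\big(\frac{2A}{2-\alpha}\big)^{\beta}k$ (your $c^{-\alpha}k_0$ after the rescaling of $u$), and deduce uniqueness and $\dist(\cdot,M)\sim x$ from the quasi-isometry; your Step 1 usefully makes explicit the $\bar g$-orthogonal collar that the paper's decomposition $\bar g(0,m)=A(m)^2du^2+k(m)$ tacitly assumes, since a $du\,dm$ term not vanishing at $u=0$ would not be $\varepsilon$-small (get it from the gradient flow of the smooth $u$, a $C^1$ vector field, rather than from geodesics of a $C^1$ metric, which need not be unique), and Steps 3--4 spell out estimates the paper only asserts.
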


\begin{proof}
Fix $m\in M$. Since $u$ is transverse to $M$, the covector $du$ is nonzero on $M$ and $\ker(du)=T_mM$. At $(0,m)$, we may write $\bar g(0,m)=A(m)^2du^2+k(m)$, where $A(m)>0$ and $k(m)$ is a positive definite quadratic form on $T_mM$. Define
$$
g_1=u^{-\alpha(m)}\bar g(0,m)=u^{-\alpha(m)}\big(A(m)^2du^2+k(m)\big).
$$
Since $\bar g$ is $C^1$ up to $M$, we have $g=u^{-\alpha}\bar g\sim g_1$ as $u\to 0$ in the sense of quasi-isometries (see Appendix \ref{app:quasi}). Set
$$
x=A(m)\int_0^u t^{-\frac{\alpha(m)}{2}}\,dt=\frac{2A(m)}{2-\alpha(m)}u^{1-\frac{\alpha(m)}{2}}.
$$
Then
$$
dx=A(m)u^{-\frac{\alpha(m)}{2}}du+\mathrm{O}\big(u^{1-\frac{\alpha(m)}{2}}\vert\ln u\vert\big)\,dm,
$$
hence the $dx\,dm$ cross term is negligible compared with the main diagonal terms as $u\to 0$. Writing
$\beta(m)=\frac{2\alpha(m)}{2-\alpha(m)}$, we compute
$$
u^{-\alpha(m)}k(m) = x^{-\beta(m)}\left(\frac{2A(m)}{2-\alpha(m)}\right)^{\beta(m)}k(m).
$$
Therefore, up to quasi-isometry,
$$
g\sim dx^2+x^{-\beta(m)}h_0(m),
\qquad
h_0(m)=\left(\frac{2A(m)}{2-\alpha(m)}\right)^{\beta(m)}k(m).
$$
Finally, since $g\sim dx^2+x^{-\beta(m)}h_0(m)$ and $x$ is a transverse coordinate, we have $\dist_g((x,m),M)\sim x$ as $x\to 0$. The uniqueness of $h_0$ follows from
$$
h_0(m)=\lim_{x\to 0} x^{\beta(m)}g\vert_{T_mM}=\lim_{x\to 0} \dist_g((x,m),M)^{\beta(m)}g\vert_{T_mM}.
$$
The theorem is proved.
\end{proof}

\begin{remark}[Boundary distance and intrinsic meaning of $h_0$]\label{rem:boundary-distance}
Assume first that $\beta$ is constant and that, in a collar, the metric is exactly $g=dx^2+x^{-\beta}h_0$.
Let $d_\partial$ be the distance induced on $M$ by the ambient distance $d_g$. Then, as $q\to p$ in $M$,
$$
d_\partial(p,q)=c_\beta \, d_{h_0}(p,q)^{\frac{2}{2+\beta}} (1+\mathrm{o}(1)),
$$
with an explicit constant $c_\beta>0$ given in Lemma \ref{lem:boundary-distance} in Appendix \ref{app:boundary-distance}.
In particular, for fixed $\beta$, the boundary metric determines $h_0$ up to a multiplicative constant depending only on $\beta$.
This gives an intrinsic pointwise meaning to $h_0$ along the boundary.

For a variable $\beta(m)$, the same blow-up argument as in Lemma \ref{lem:boundary-distance} applies after freezing the coefficients at a boundary point $p$; see Remark \ref{rem:boundary-distance-variable}. More precisely, for every $p\in M$, as $q\to p$,
$$
d_\partial(p,q)=c_{\beta(p)}\,d_{h_0}(p,q)^{\frac{2}{2+\beta(p)}}(1+\mathrm{o}(1)),
$$
where $c_{\beta(p)}$ is given by Lemma \ref{lem:boundary-distance} with $\beta$ replaced by $\beta(p)$.
In particular, there exist a neighborhood $U$ of $p$ and constants $c'_p,C'_p>0$ such that, for every $q\in U$,
$$
c'_p\,d_{h_0}(p,q)^{\frac{2}{2+\beta(p)}}\leq d_\partial(p,q)\leq C'_p\,d_{h_0}(p,q)^{\frac{2}{2+\beta(p)}}.
$$
\end{remark}

\begin{proposition}\label{prop:normal-quasi}
In a domain $[\eta_1, a]\times T_\eta \subset X_a$ where $T_\eta\subset M$ has diameter $\eta $, if we assume that $\eta\vert\ln \eta_1\vert \leq \varepsilon $, then $g_0$ is $\mathrm{O}(\varepsilon)$-quasi-isometric to $dx^2+x^{-\beta(m_0)}h_0 $ for any $m_0\in T_\eta$.
This holds in particular if $\eta =\lambda ^{\gamma -1/2} $ with $0<\gamma < 1/2$ and $\eta_1=C\lambda^{-1/2}$ for $\lambda $ large enough.
\end{proposition}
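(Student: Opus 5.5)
The plan is to compare the two metrics directly. Both $g_0=dx^2+x^{-\beta(m)}h_0$ and $g_{m_0}=dx^2+x^{-\beta(m_0)}h_0$ share the $dx^2$ part and the tangential tensor $h_0$. They differ only through the scalar factor multiplying $h_0$. So the quasi-isometry constant is controlled by the ratio of the conformal factors on the tangential part. We write
$$
\frac{x^{-\beta(m)}}{x^{-\beta(m_0)}}=\exp\big((\beta(m_0)-\beta(m))\ln x\big),
$$
and bound the exponent uniformly on $[\eta_1,a]\times T_\eta$. Since both metrics are block-diagonal in $(x,m)$ with identical $dx^2$ block, we get, for every tangent vector $v=(\xi,w)$,
$$
\min\big(1,e^{-\delta}\big)\,g_{m_0}(v,v)\leq g_0(v,v)\leq \max\big(1,e^{\delta}\big)\,g_{m_0}(v,v),
$$
where $\delta=\sup\vert(\beta(m)-\beta(m_0))\ln x\vert$.

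Next we estimate $\delta$. Because $\beta\in C^1(M)$ and $M$ is compact, $\beta$ is Lipschitz for $d_{h_0}$ with constant $L=\Vert d\beta\Vert_\infty$. Hence for $m,m_0\in T_\eta$ we have $\vert\beta(m)-\beta(m_0)\vert\leq L\eta$. This uses that $T_\eta$ has $h_0$-diameter $\eta$ and, for small $\eta$, lies in a geodesically convex chart, so the mean value inequality applies along a short curve. On $[\eta_1,a]$ we have $\vert\ln x\vert\leq\max(\vert\ln\eta_1\vert,\vert\ln a\vert)$, and $\vert\ln\eta_1\vert$ dominates once $\eta_1$ is small. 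Therefore $\delta\leq L\eta\vert\ln\eta_1\vert+L\eta\vert\ln a\vert\leq L\varepsilon+\mathrm{O}(\eta)$. Since $e^{\pm\delta}=1+\mathrm{O}(\varepsilon)$, this gives $\mathrm{O}(\varepsilon)$-quasi-isometry in the sense of Appendix \ref{app:quasi}, with a constant depending only on $L$ and $a$. The bound is uniform in $m_0\in T_\eta$, since only the diameter of $T_\eta$ enters.

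For the particular case, take $\eta=\lambda^{\gamma-1/2}$ and $\eta_1=C\lambda^{-1/2}$. Then $\eta\vert\ln\eta_1\vert=\lambda^{\gamma-1/2}\big(\tfrac12\ln\lambda-\ln C\big)$, which tends to $0$ because $\gamma<1/2$. So it is $\leq\varepsilon$ for $\lambda$ large, exactly as in the scale computation preceding Section \ref{sec:normal}.

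The main obstacle is bookkeeping rather than analysis. We must make sure the exponent difference is measured in the same distance as the diameter of $T_\eta$ (the $h_0$-distance). We must also check that the upper range $x\leq a$ contributes only $\mathrm{O}(\eta)$, which is harmless. Beyond that, the argument is just the elementary inequality $\vert e^{t}-1\vert\leq 2\vert t\vert$ for small $t$.
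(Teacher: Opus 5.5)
Your proposal is correct and follows the paper's argument. The paper's proof is the one-line observation that $\vert x^{\beta(m)-\beta(m_0)}-1\vert=\mathrm{O}(\varepsilon)$ on $[\eta_1,a]\times T_\eta$, and you have spelled out its ingredients: the Lipschitz bound $\vert\beta(m)-\beta(m_0)\vert\leq L\eta$, the bound on $\vert\ln x\vert$, and the comparison of two block-diagonal metrics with the same $dx^2$ part. The extra $L\eta\vert\ln a\vert$ term you carry is harmless, and in the paper's applications the upper endpoint is $b<1$, so $\vert\ln x\vert\leq\vert\ln\eta_1\vert$ and that term does not arise at all.
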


\begin{proof}
This follows from the fact that $\vert x^{\beta(m)-\beta(m_0)}-1\vert=\mathrm{O}(\varepsilon)$. 
\end{proof}

\subsection{Triangulations}\label{sec:triang}

Any  closed manifold $M$ of dimension $n$  admits  smooth triangulations; this was proved in \cite{Cairns}
and in \cite{Whit}; for a more recent proof, see the textbook \cite{Mu61}.
This means that the manifold $M$ is topologically identified with a finite simplicial complex $K$ of dimension $n$ so that each simplex of dimension $n$ of $K$ is the image under a smooth diffeomorphism $F$ of the standard simplex $T_0 = \{ x=(x_i)\in \R^n \ \mid\ 0\leq x_1 \leq x_2 \leq \cdots \leq x_n \leq 1 \}$.
More precisely, $F$ is a diffeomorphism mapping a neighborhood $U$ of $T_0$ in $\R^n$ onto an open subset $V$ of $M$.

\begin{definition}\label{def:simplex}
A \emph{flat simplex} is any image of the standard  simplex $T_0\subset \R^n$ by an affine transform of $\R^n$, equipped with the standard Euclidean metric $\vert dy\vert^2$ induced by that of $\R^n$.
\end{definition}

\begin{definition} 
We say that two flat simplices  $T_1$ and $T_2$ have the same \emph{shape} if $T_2$ is the image of $T_1$ by an isometry composed with a dilation (i.e., a similitude).
\end{definition}

\begin{definition}\label{def:polyhedral}
We say that $h$ is a \emph{polyhedral metric} on $M$ if a smooth triangulation $\mathcal T$ of $M$ is given and if, for every simplex $T\in\mathcal T$, $(T,h\vert_T)$ is isometric to a flat simplex in the sense of Definition \ref{def:simplex}. We do not require that the restrictions match along faces.
\end{definition}

In other words, a polyhedral metric on $M$ is a possibly discontinuous locally flat metric on $M$.

\begin{proposition}\label{prop:quasi}
Given any $\varepsilon >0 $ and any continuous metric $h$ on $M$, there exists a polyhedral metric $h_\varepsilon $ that is $\varepsilon $-quasi-isometric to $(M,h)$ (see Definition \ref{defi:quasi}). 
\end{proposition}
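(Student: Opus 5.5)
We construct $h_\varepsilon$ by refining a fixed smooth triangulation and replacing $h$ on each small simplex by a constant (hence flat) metric, using the uniform continuity of $h$. Fix a smooth triangulation $\mathcal T_0$ of $M$ as in Section~\ref{sec:triang}. Each top-dimensional simplex $T$ comes with a diffeomorphism $F_T$ from a neighborhood $U_T$ of $T_0$ onto an open set $V_T\subset M$. Pull back $h$ to $U_T$ and write $F_T^*h=\sum_{ij}a^T_{ij}(y)\,dy_i\,dy_j$. The coefficient matrix $a^T(y)$ is continuous and positive definite on the compact set $T_0$, so it is uniformly continuous there and uniformly bounded above and below: $c\,\vert\xi\vert^2\leq a^T(y)(\xi,\xi)\leq C\vert\xi\vert^2$, with constants uniform over the finitely many simplices.

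Next, we subdivide $T_0$. Iterated barycentric subdivision, or better the standard subdivision of the cube-type simplex $T_0=\{0\leq x_1\leq\cdots\leq x_n\leq 1\}$ into $k^n$ affine simplices of diameter $\leq\sqrt n/k$, yields affine subsimplices $S$ of arbitrarily small diameter $\delta$. Their images $F_T(S)$ form a smooth triangulation $\mathcal T$ of $M$, since each is the image of $T_0$ by the smooth diffeomorphism $F_T\circ L_S$, where $L_S$ is affine. On each $S$ we pick a point $y_S\in S$ and define $h_\varepsilon$ on $F_T(S)$ by $(F_T^{-1})^*\big(a^T(y_S)(dy,dy)\big)$. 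This is a constant-coefficient quadratic form in the affine coordinate $y$. A linear change of variables $P$ with $P^{\top}P=a^T(y_S)$ makes it Euclidean, so $(F_T(S),h_\varepsilon)$ is isometric to the flat simplex $P(S)$, which is an affine image of $T_0$. Hence $h_\varepsilon$ is polyhedral in the sense of Definition~\ref{def:polyhedral}; mismatches along faces are allowed.

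It remains to check the quasi-isometry. For $y\in S$ and every $\xi$,
\[
\big\vert a^T(y)(\xi,\xi)-a^T(y_S)(\xi,\xi)\big\vert\leq \omega(\delta)\vert\xi\vert^2\leq \frac{\omega(\delta)}{c}\,a^T(y_S)(\xi,\xi).
\]
Here $\omega$ is the modulus of continuity of $a^T$, and the bound is uniform over $T$. Choosing $\delta$ so that $\omega(\delta)/c\leq\varepsilon$ gives $(1-\varepsilon)h_\varepsilon\leq h\leq(1+\varepsilon)h_\varepsilon$ pointwise on each simplex, hence almost everywhere on $M$. This is the $\varepsilon$-quasi-isometry of Definition~\ref{defi:quasi}, possibly up to adjusting $\varepsilon$ to match that definition's normalization, for instance a $(1\pm\varepsilon)^2$ convention.

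The only mildly delicate point is making sure the refined collection is a genuine smooth triangulation. Adjacent original simplices induce subdivisions on shared faces that need not coincide. Since Definition~\ref{def:polyhedral} only needs a cover by images of simplices with disjoint interiors, this does not matter for our purposes. If an honest simplicial complex is required, we use a subdivision scheme that is canonical on faces, such as iterated barycentric subdivision, which commutes with face restrictions because faces are mapped affinely by the $F_T$ of the simplicial structure.
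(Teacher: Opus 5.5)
Your proof is correct and follows essentially the same route as the paper: take a triangulation of small mesh, freeze the coefficients of $h$ at a point of each simplex in that simplex's affine (barycentric) coordinates, and conclude by uniform continuity; matching the $(1\pm\varepsilon)$ bounds to the $K=1+\varepsilon$ normalization is a harmless adjustment of $\varepsilon$. Building the fine triangulation by affine subdivision inside the fixed charts $F_T$ is in fact slightly more careful than the paper, which compares $h(m)$ and $h(m')$ ``as quadratic forms on $TM$'' for an arbitrary fine triangulation and so tacitly assumes the charts have nearly constant differential on each small simplex; in your version this holds automatically, because the new charts $F_T\circ L_S$ differ from $F_T$ only by affine maps.
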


In what follows, for a fixed value of $\varepsilon>0$, the polyhedral metric $h_\varepsilon$ will be used in order to get approximate Weyl laws.

\begin{proof}[Proof of Proposition \ref{prop:quasi}.]
Fix $\varepsilon>0$. Since $h$ is continuous and $M$ is compact, there exists $\delta>0$ such that for any $m,m'\in M$ with $\dist_h(m,m')\leq\delta$, one has
$$
\frac{1}{1+\varepsilon}h(m)\leq h(m')\leq(1+\varepsilon)h(m)
$$
as quadratic forms on $TM$. Choose a smooth triangulation $\mathcal T$ of $M$ with mesh size at most $\delta$. For each simplex $T\in\mathcal T$, pick a point $m_T\in T$ and define on $T$ the constant coefficient metric obtained by freezing $h$ at $m_T$ in barycentric coordinates. This yields a polyhedral metric $h_\varepsilon$ which is $\varepsilon$-quasi-isometric to $h$.
\end{proof}

According to a result of \cite{Fr} (see also \cite{EG00}), given any $k\in \N^*$, a flat simplex $T$ in $\R^n$ can be subdivided into $k^n$ simplices of the same volume so that the number of possible shapes is less than $n!$.

Hence, given any $\eta>0$  small enough (in what follows, $\eta$ will tend to $0$ while $\lambda\to+\infty$), we can again subdivide the simplices of the polyhedral metric $h_\varepsilon$ into simplices $T_\eta$ whose  diameters lie in the interval $[c\eta,\eta] $ for some fixed $c\in(0,1)$ not depending on $\eta$.
Let us summarize.

\begin{proposition}\label{prop:triang}
Let $h$ be a continuous metric on $M$. For any $\varepsilon>0$ and $\eta>0$, there exists a polyhedral metric $h_\eta$ on $M$ that is $\varepsilon$-quasi-isometric to $h$ and a triangulation $\mathcal T_\eta$ of $M$ such that the diameter of each simplex $T_\eta\in\mathcal T_\eta$ lies in $[c\eta,\eta]$ for some $c\in(0,1)$ not depending on $\eta$, and such that only finitely many simplex shapes occur, with a bound independent of $\eta$.
\end{proposition}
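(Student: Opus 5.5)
The plan is to combine Proposition \ref{prop:quasi} with the subdivision result of \cite{Fr}, choosing the subdivision level $k$ as a function of $\eta$. First, I fix $\varepsilon>0$ and apply Proposition \ref{prop:quasi}. This yields a smooth triangulation $\mathcal T$ of $M$, with finitely many $n$-simplices $T^{(1)},\dots,T^{(N)}$ ($N$ independent of $\eta$), and a polyhedral metric $h_\varepsilon$ that is $\varepsilon$-quasi-isometric to $h$. Each $(T^{(i)},h_\varepsilon|_{T^{(i)}})$ is isometric to a flat simplex $S_i\subset\R^n$; let $D_i$ be its Euclidean diameter. I set $h_\eta:=h_\varepsilon$. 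The polyhedral metric does not depend on $\eta$; only the triangulation is refined.

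Next, given $\eta>0$ small, I pick for each $i$ the integer $k_i=\lceil D_i/\eta\rceil$. I then apply the subdivision result of \cite{Fr} to $S_i$ with $k=k_i$. This produces $k_i^n$ simplices of equal volume. I want to use the standard Freudenthal (Kuhn) subdivision: take the affine map sending $T_0$ to $S_i$, and subdivide $T_0$ by the hyperplanes $x_j-x_l\in\frac1k\mathbb Z$ and $x_j\in\frac1k\mathbb Z$. Every resulting piece is then a translate, possibly composed with $x\mapsto -x$ and a permutation structure, of one of at most $n!$ simplices of the form $\frac1k\sigma$, where $\sigma$ ranges over a fixed finite family of unit simplices. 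After the affine map, each small simplex of $S_i$ is the image under the fixed affine map $L_i$ of $\frac1{k_i}\sigma+v$. Its shape is therefore determined by the pair $(i,\sigma)$: dilation and translation do not change shape, and $-\mathrm{Id}$ is an isometry. So the number of shapes is at most $N\cdot n!$, which is independent of $\eta$. Pulling back by the isometry to $T^{(i)}$ gives the triangulation $\mathcal T_\eta$. It is a smooth triangulation of $M$ in the sense of Section \ref{sec:triang}, with each small simplex the image of $T_0$ under the smooth map $F_i\circ(\text{affine})$. Compatibility along faces is not required by Definition \ref{def:polyhedral}. I would still remark that the Freudenthal subdivision induces the same subdivision on shared faces, so if one wants a genuine simplicial complex it is consistent. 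Two adjacent top simplices could have different flat structures, though, so the induced face subdivisions agree combinatorially and smoothly but not metrically. This is harmless since only $h_\eta$ restricted to each simplex matters.

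Finally, I would check the diameter bounds, which I expect to be the only real point to verify. The $h_\eta$-diameter of a piece $L_i(\frac1{k_i}\sigma+v)$ equals $\frac1{k_i}\operatorname{diam}(L_i\sigma)$. Since $\sigma$ ranges over finitely many nondegenerate simplices, $c_iD_i\le\operatorname{diam}(L_i\sigma)\le D_i$ for some $c_i\in(0,1]$ depending only on $S_i$. The upper bound holds because $L_i\sigma\subset S_i$ up to translation. For $\eta\le\min_iD_i$ we have $D_i/\eta\le k_i\le 2D_i/\eta$. Hence every diameter lies in $[\tfrac{c_i}{2}\eta,\eta]$, and I take $c=\tfrac12\min_i c_i$, which is independent of $\eta$. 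For the finitely many large $\eta$, I can shrink $c$ or simply restrict to $\eta$ small, as the statement is used only for $\eta\to0$. If one prefers diameters measured with respect to $h$ rather than $h_\eta$, the $\varepsilon$-quasi-isometry changes the bounds by a factor $(1+\varepsilon)^{\pm1/2}$, which can be absorbed into $c$ and into a harmless rescaling of $\eta$.
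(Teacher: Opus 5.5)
Your strategy is the same as the paper's. You apply Proposition \ref{prop:quasi} and then Freudenthal-subdivide each flat simplex $S_i$, which gives at most $n!$ shapes per simplex of $\mathcal T$. The difference is that you fix the level $k_i$ in advance. The paper instead iterates the subdivision \emph{until all diameters are $\leq\eta$}, so the upper bound is automatic. That upper bound is exactly where your argument fails when $n\geq 3$.

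You claim $\operatorname{diam}(L_i\sigma)\leq D_i$ because $L_i\sigma\subset S_i$ up to translation. This holds for $n=2$, where the prototiles are $\pm T_0$ up to translation, but not in general. The pieces of the $k$-fold Freudenthal subdivision of $T_0$ are translates of $\frac1k P_\pi T_0$, where $P_\pi$ permutes coordinates. Their edge vectors are $\frac1k e_S$ with $e_S=\sum_{l\in S}e_l$, and $S$ need not be an interval of $\{1,\dots,n\}$. The edge vectors of $T_0$ itself are only the interval sums $e_a+\dots+e_b$.

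Here is a concrete case. Take $n=3$ and any $k\geq 2$. Then $T_0$ contains the piece $\frac1k\big((0,0,1)+\sigma\big)$ with $\sigma=\{0\leq t_1\leq t_3\leq t_2\leq 1\}$, and this piece has the edge $\frac1k(e_1+e_3)$. Choose $Le_1=u+\delta v$, $Le_2=-u+\delta w$, $Le_3=u$ with $(u,v,w)$ orthonormal and $\delta$ small. Then $\operatorname{diam}(LT_0)=\sqrt{1+2\delta^2}$, while $\vert L(e_1+e_3)\vert=\sqrt{4+\delta^2}$. So whenever $D/\eta\in\N$, your choice $k=\lceil D/\eta\rceil$ produces pieces of diameter about $2\eta$. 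The inclusion of the diameters in $[c\eta,\eta]$ is therefore not established.

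The repair is easy. Set $E_i=\max_\sigma\operatorname{diam}(L_i\sigma)$. This is independent of $\eta$, and $E_i\leq\lceil n/2\rceil D_i$ because every $e_S$ splits into at most $\lceil n/2\rceil$ interval sums, each an edge vector of $T_0$. Take $k_i=\lceil E_i/\eta\rceil$. Your computation then gives diameters in $\big[\frac{\min_\sigma\operatorname{diam}(L_i\sigma)}{2E_i}\eta,\eta\big]$ for $\eta\leq\min_iE_i$.

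There is a second, smaller error. You remark that the subdivisions agree on shared faces, but this is false once $k_i$ depends on $i$. Adjacent simplices are then cut with different $k$, which creates hanging vertices. The resulting $\mathcal T_\eta$ is then not a simplicial complex in the sense of Section \ref{sec:triang}. To fix this, use one common $k=\lceil\max_iE_i/\eta\rceil$, and label every top simplex according to a single global ordering of the vertices of $\mathcal T$. The Freudenthal subdivisions then restrict compatibly to common faces, and $c=\min_{i,\sigma}\operatorname{diam}(L_i\sigma)/(2\max_iE_i)$ is still independent of $\eta$. For the later Dirichlet--Neumann bracketing, a non-conforming decomposition would suffice anyway.
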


\begin{proof}
Apply Proposition \ref{prop:quasi} to obtain a polyhedral metric $h_\varepsilon$ on a fixed triangulation $\mathcal T$ of $M$. Subdivide the simplices of $\mathcal T$ by iterating the Freudenthal subdivision (see \cite{Fr}) until all simplices have diameter at most $\eta$. The diameter bounds are part of the construction, and the number of shapes is bounded by $n!$ times the number of simplices of the fixed triangulation $\mathcal T$, hence independently of $\eta$.
\end{proof}

\begin{remark}\label{rem:finite-shapes}
Proposition \ref{prop:triang} provides a finite family of reference shapes. In particular, any constant in a local spectral estimate that depends only on the shape (for instance Lemma \ref{le:uniform}) can be chosen uniformly for all simplices $T_\eta$ and for all large $\lambda$. This uniformity is used repeatedly when summing local contributions.
\end{remark}
 
In particular, the number of shapes is bounded by $n!$ times the number of simplices in the first subdivision
with the metric $h_\varepsilon$. This bound is uniform in $\eta$.

\subsection{Hardy's inequalities }\label{sec:hardy}

\begin{lemma}\label{le:0ev}
There exists $c>0$ not depending on $\beta $ such that the operator $\triangle_X$ restricted to the domain $X_{ c \lambda ^{-1/2}} $ with Dirichlet boundary conditions at $x=0$ and Dirichlet or Neumann boundary conditions at $c \lambda ^{-1/2}$ has no eigenvalues smaller than $\lambda $.
\end{lemma}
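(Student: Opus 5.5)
The plan is to reduce to the model metric, discard the tangential part of the quadratic form, and prove a one-dimensional weighted Hardy--Poincaré inequality in $x$ whose constant does not depend on the exponent. By Theorem \ref{theo:normal}, on a collar $X_b$ the metric $g$ is $\varepsilon$-quasi-isometric to $g_0=dx^2+x^{-\beta(m)}h_0$. By Lemma \ref{lemm:quasi}, quasi-isometry changes both the Dirichlet form and the $L^2$ norm by factors $1+\bigO(\varepsilon)$. So it suffices to show that, for $g_0$ on $X_\ell=(0,\ell)\times M$, the Rayleigh quotient satisfies $\geq C_0/\ell^2$ with $C_0$ independent of $\beta$. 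We then take $\ell=c\lambda^{-1/2}$ with $c$ small, say $c^2<C_0(1+\bigO(\varepsilon))^{-1}$, with $\varepsilon$ fixed once and for all. For $\lambda$ large, $\ell<b$, so $X_\ell$ lies inside the collar.

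In the model, $d\vol_{g_0}=x^{-s(m)}\,dx\,d\vol_{h_0}(m)$ with $s(m)=\frac{n\beta(m)}{2}\geq 0$. The quadratic form is
$$
q(f)=\int_M\int_0^\ell\Big(\vert\partial_x f\vert^2+x^{\beta(m)}\vert\nabla_{h_0}f\vert^2_{h_0}\Big)x^{-s(m)}\,dx\,d\vol_{h_0}(m).
$$
The tangential term is nonnegative, so I drop it. It then suffices to prove, for each fixed $m$ and each $f$ in the core (smooth, vanishing near $x=0$, arbitrary or vanishing at $x=\ell$), that
$$
\int_0^\ell \vert f\vert^2x^{-s}\,dx\leq \frac{\ell^2}{2}\int_0^\ell\vert f'\vert^2x^{-s}\,dx .
$$
Integrating over $M$ gives $q(f)\geq \frac{2}{\ell^2}\Vert f\Vert^2_{L^2(X_\ell,g_0)}$. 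By density this extends to the form domain, and the min-max principle then gives the claim for both boundary conditions at $x=\ell$. Neumann is the weaker condition, since its form domain is larger, so it suffices to treat it.

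The one-dimensional inequality follows from Cauchy--Schwarz with a weight adapted to $x^{-s}$. Since $f(0)=0$, we have $f(x)=\int_0^xf'(t)\,dt$, hence
$$
\vert f(x)\vert^2\leq\Big(\int_0^x\vert f'\vert^2t^{-s}dt\Big)\Big(\int_0^x t^{s}dt\Big)\leq \frac{x^{s+1}}{s+1}\int_0^\ell\vert f'\vert^2t^{-s}dt .
$$
Multiplying by $x^{-s}$ and integrating over $(0,\ell)$ gives the constant $\frac{\ell^2}{2(s+1)}\leq\frac{\ell^2}{2}$, uniformly in $s\geq 0$ and hence in $\beta$. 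This is why constancy of $\beta$ is never used.

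The only delicate point, which I expect to be the main obstacle, is justifying $f(0)=0$ and the passage to the Friedrichs form domain. I would handle it by proving the inequality first on the core $C_0^\infty(X\setminus\partial X)$, where it is pointwise valid. I would then extend it by closure of the form, since both sides are continuous in the form norm. A secondary bookkeeping point is that the constant $c$ must absorb the quasi-isometry factor; this is harmless because $\varepsilon$ is fixed before $\lambda\to+\infty$.
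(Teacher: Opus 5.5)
Your proof is correct. It differs from the paper's mainly in how the one-dimensional bound in $x$ is obtained.

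\textbf{The paper's route.} The paper treats constant $\beta$ and conjugates by the ground-state factor $x^{n\beta/4}$. This gives the unitarily equivalent operator $-\partial_x^2+C(n,\beta)x^{-2}+x^\beta\triangle_M$ on flat measure. It then drops both nonnegative terms and compares with $-\partial_x^2$ on $(0,c\lambda^{-1/2})$. The explicit lowest eigenvalues of that operator, $\pi^2c^{-2}\lambda$ (Dirichlet) and $\frac{\pi^2}{4}c^{-2}\lambda$ (Neumann), yield the threshold $c<\pi/2$ that is quoted later.

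\textbf{Your route.} You stay in the weighted space $L^2(x^{-s(m)}dx)$, drop only the tangential term, and prove the weighted Poincar\'e inequality by Cauchy--Schwarz, fiber by fiber in $m$.

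\textbf{What your route buys.}
\begin{itemize}
\item It handles a variable $\beta(m)$ with no extra work. In the variable case, conjugating by $x^{s(m)/2}$ produces tangential cross terms involving $\ln x\,\nabla s$. So the paper's unitary equivalence, taken literally, needs constant $\beta$, or else a fiberwise version applied after the tangential term has been discarded.
\item It avoids the boundary term $\frac{s}{2\ell}\vert\phi(\ell)\vert^2$ that the ground-state transform generates at $x=\ell$ in the Neumann case. That term has the right sign, but the paper does not check it.
\item It makes explicit the quasi-isometric reduction from $g$ to $g_0$, and the resulting adjustment of $c$, which the paper leaves implicit.
\end{itemize}

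\textbf{What it costs.} Your constant is slightly worse: $2/\ell^2$ instead of $\pi^2/(4\ell^2)$. Your admissible range is therefore roughly $c<\sqrt{2}\,(1+\varepsilon)^{-(n+2)/2}$ rather than $c<\pi/2$. This is harmless, since every later use of the lemma only needs $c$ small. However, ``choose $c\in(0,\pi/2)$'' in the proof of item 2 would then have to be read as ``choose $c$ small enough''.
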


\begin{proof}
On the model metric $g_0=dx^2+x^{-\beta}h_0$ with constant $\beta$, the Friedrichs Laplacian is unitarily equivalent to $-\partial_x^2+C(n,\beta)x^{-2}+x^\beta\triangle_M$ (see \cite{CDHDT24}). Since $C(n,\beta)x^{-2}\geq 0$ and $x^\beta\triangle_M\geq 0$, we have the form inequality
$$
-\partial_x^2+C(n,\beta)x^{-2}+x^\beta\triangle_M\geq -\partial_x^2.
$$
Hence, on $(0,c\lambda^{-1/2})\times M$ with a Dirichlet condition at $x=0$, the lowest eigenvalue is bounded from below by the lowest eigenvalue of $-\partial_x^2$ on $(0,c\lambda^{-1/2})$ with the same boundary condition at $x=c\lambda^{-1/2}$ (Dirichlet or Neumann). This lowest eigenvalue equals $\pi^2c^{-2}\lambda$ in the Dirichlet case and $\frac{\pi^2}{4}c^{-2}\lambda$ in the Neumann case. Choosing $0<c<\pi/2$ gives a lower bound strictly larger than $\lambda$ in both
the Dirichlet and Neumann cases, for $\lambda$ large enough.
\end{proof}

We next record a uniform Weyl estimate on mesoscopic domains, which will be used repeatedly in the localization argument.

\subsection{Weyl estimates for ``mesoscopic'' domains}\label{sec:weyl-box}
One of the key ingredients is to get a uniform Weyl estimate for domains of small size $\eta$.
The relevant sizes are the \emph{mesoscopic lengths}: if $\mu$ is a spectral parameter, then the typical wavelength is $\mu^{-1/2}$, and we only expect good uniform estimates on domains whose size is much larger than $\mu^{-1/2}$.
We will therefore use rescaled domains $D_\eta=\eta D_1\subset\R^d$, where $d$ may vary according to the application.
Recall the rescaling laws $\mu_j(D_\eta)=\eta^{-2}\mu_j(D_1)$ and $\vert D_\eta\vert=\eta^d\vert D_1\vert$.

\begin{lemma}\label{le:uniform}
Let $d\geq1$, let $D_1\subset\R^d$ be a piecewise smooth bounded open set and let $D_\eta=\eta D_1$ for any $\eta>0$. Let $\varepsilon>0$. There exists $C_\varepsilon>0$, depending on $D_1$, such that for any $\eta>0$ and any $\mu\geq\frac{C_\varepsilon}{\eta^2}$, we have, for both Dirichlet and Neumann Laplacians on $D_\eta$,
\begin{equation*}
\frac{1}{1+\varepsilon}\gamma_d\vert D_\eta\vert\mu^{d/2}\leq N_{D_\eta}^{D/N}(\mu)\leq(1+\varepsilon)\gamma_d\vert D_\eta\vert\mu^{d/2}.
\end{equation*}
Moreover, for any $j\geq C_\varepsilon$, for both Dirichlet and Neumann boundary conditions, we have
$$
\frac{1}{1+\varepsilon}\left(\frac{j}{\gamma_d\vert D_\eta\vert}\right)^{2/d}\leq \mu_j^{D/N}(D_\eta)\leq (1+\varepsilon)\left(\frac{j}{\gamma_d\vert D_\eta\vert}\right)^{2/d},
$$
where $\gamma_d=\frac{b_d}{(2\pi)^d}$ and $b_d$ denotes the Euclidean volume of the unit ball in $\R^d$.
\end{lemma}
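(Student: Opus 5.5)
The argument is a rescaling reduction: everything is transported to the fixed domain $D_1$, where the classical Weyl law is available, and the uniformity in $\eta$ comes for free from exact scaling. Throughout, $D_1$ is fixed and piecewise smooth. By the classical Weyl law for both the Dirichlet and the Neumann Laplacian on such a domain (for Neumann this needs the boundary to be regular enough, e.g.\ Lipschitz; piecewise smooth bounded sets of the kind used here, namely simplices and products of simplices with intervals, qualify), we have
$$
N_{D_1}^{D/N}(\nu)=\gamma_d\vert D_1\vert\nu^{d/2}(1+\mathrm{o}(1))
\qquad\textrm{as }\nu\to+\infty.
$$
Hence, for the given $\varepsilon$, there is $\nu_\varepsilon$ such that for all $\nu\geq\nu_\varepsilon$ the two-sided bound $(1+\varepsilon)^{-1}\gamma_d\vert D_1\vert\nu^{d/2}\leq N_{D_1}^{D/N}(\nu)\leq(1+\varepsilon)\gamma_d\vert D_1\vert\nu^{d/2}$ holds, for both boundary conditions simultaneously (take the larger threshold).

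Next I would transfer this to $D_\eta$ by exact scaling. The dilation $y\mapsto\eta y$ gives $\mu_j(D_\eta)=\eta^{-2}\mu_j(D_1)$, so $N_{D_\eta}(\mu)=N_{D_1}(\eta^2\mu)$. Similarly, $\gamma_d\vert D_\eta\vert\mu^{d/2}=\gamma_d\vert D_1\vert(\eta^2\mu)^{d/2}$. Setting $\nu=\eta^2\mu$, the condition $\mu\geq C_\varepsilon\eta^{-2}$ is exactly $\nu\geq C_\varepsilon$. Choosing $C_\varepsilon\geq\nu_\varepsilon$ therefore gives the first statement, uniformly in $\eta$.

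For the eigenvalue statement, I would invert the counting bound. The key preliminary is that the individual eigenvalues satisfy the same type of asymptotics. From the Weyl law on $D_1$, $\mu_j(D_1)\sim(j/(\gamma_d\vert D_1\vert))^{2/d}$ as $j\to\infty$; this is the standard equivalence between $N(\nu)\sim c\nu^{d/2}$ and $\mu_j\sim(j/c)^{2/d}$, obtained by evaluating $N$ at $\mu_j$ and just below it. The inequalities $N(\mu_j-0)<j\leq N(\mu_j)$ are what handle multiplicities. This yields $J_\varepsilon$ such that for $j\geq J_\varepsilon$ the ratio $\mu_j(D_1)/(j/(\gamma_d\vert D_1\vert))^{2/d}$ lies in $[(1+\varepsilon)^{-1},1+\varepsilon]$. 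Multiplying by $\eta^{-2}$ and using $\vert D_\eta\vert^{2/d}=\eta^2\vert D_1\vert^{2/d}$ gives the claim on $D_\eta$ for all $\eta$. Finally, I would enlarge $C_\varepsilon$ to $\max(\nu_\varepsilon,J_\varepsilon)$.

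The only step that is not a pure computation is the input Weyl law on the fixed piecewise smooth domain with Neumann conditions. I would justify it by Dirichlet--Neumann bracketing with a cube tiling, with an error of order $\nu^{(d-1)/2}$ coming from boundary cubes. This is where the regularity of $\partial D_1$ enters: it guarantees that the boundary layer has volume $\mathrm{O}(\delta)$ at scale $\delta$, and that the Neumann problem on the boundary pieces has controlled counting function. For the Neumann boundary pieces, the cleanest route is to use an extension operator on a Lipschitz domain to bound their counting function by a constant times the volume times $\nu^{d/2}$. Everything else is exact scaling.
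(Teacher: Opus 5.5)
Your proposal is correct and matches the paper's proof: the scaling laws $\mu_j(D_\eta)=\eta^{-2}\mu_j(D_1)$ and $\vert D_\eta\vert=\eta^d\vert D_1\vert$ make both normalized ratios independent of $\eta$, so everything reduces to the classical Weyl law on the fixed domain $D_1$, which the paper simply invokes. Your extra details (the inversion via $N(\mu_j-0)<j\leq N(\mu_j)$, and the remark that the Neumann Weyl law needs some boundary regularity such as Lipschitz, which holds for the simplices and prisms where the lemma is actually used) only make explicit what the paper leaves implicit; note that a cube-tiling bracketing argument typically gives a weaker remainder than $\mathrm{O}(\nu^{(d-1)/2})$, but only $\mathrm{o}(\nu^{d/2})$ is needed here.
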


\begin{proof}
This follows from $\mu_j(D_\eta)=\eta^{-2}\mu_j(D_1)$ and $\vert D_\eta\vert=\eta^d\vert D_1\vert$, combined with the Weyl law for $D_1$.
\end{proof}

\subsection{The case $\bm > \beta_c$}\label{sec_bm>betac}

\begin{lemma}[Freezing on a simplex]\label{lem:freezing-simplex}
Assume that $\beta\in C^1(M)$ and let $T_\eta$ be a simplex of diameter $\mathrm{O}(\eta)$. Pick $m_T\in T_\eta$. If $\eta=\lambda^{\gamma-\frac{1}{2}}$ with $\gamma\in(0,\frac{1}{2})$, then as $\lambda\to+\infty$ we have uniformly for $m\in T_\eta$:
$$
\lambda^{\frac{n}{4}\beta(m)}=\lambda^{\frac{n}{4}\beta(m_T)}\big(1+\mathrm{o}(1)\big),
\qquad
A(\beta(m),n)=A(\beta(m_T),n)+\mathrm{o}(1),
$$
and, for $x\in[c\lambda^{-1/2},b]$,
$$
x^{-\beta(m)}=x^{-\beta(m_T)}\big(1+\mathrm{o}(1)\big).
$$
\end{lemma}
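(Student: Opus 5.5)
The argument rests on the Lipschitz bound for $\beta$. Since $\beta\in C^1(M)$ and $M$ is compact, there is $L>0$ such that $\vert\beta(m)-\beta(m_T)\vert\leq L\,d_{h_0}(m,m_T)\leq L'\eta$ for all $m\in T_\eta$. We measure diameters with respect to $h_0$, or any fixed continuous metric; they are all equivalent, with constants absorbed into $L'$. The constant $L'$ depends only on $\beta$ and on the constant implicit in $\mathrm{O}(\eta)$, and not on the simplex. This gives the uniformity over $T_\eta$ and over the whole triangulation $\mathcal T_\eta$. Each of the three claims then reduces to showing that $\eta$ times a logarithmic factor tends to $0$.

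For the first claim, write
$$
\lambda^{\frac n4\beta(m)}=\lambda^{\frac n4\beta(m_T)}\exp\Big(\tfrac n4(\beta(m)-\beta(m_T))\ln\lambda\Big).
$$
The exponent is bounded by $\frac n4 L'\eta\ln\lambda=\frac n4L'\lambda^{\gamma-\frac12}\ln\lambda$, which tends to $0$ because $\gamma<\frac12$. Using $\vert e^t-1\vert\leq 2\vert t\vert$ for small $t$ gives the factor $1+\mathrm{o}(1)$, uniformly in $m$. The third claim is proved the same way. For $x\in[c\lambda^{-1/2},b]$ with $b<1$, we have $\vert\ln x\vert\leq\vert\ln(c\lambda^{-1/2})\vert\leq\frac12\ln\lambda+\vert\ln c\vert$. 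Hence
$$
\vert x^{\beta(m_T)-\beta(m)}-1\vert\leq 2L'\eta\vert\ln x\vert=\mathrm{O}(\lambda^{\gamma-\frac12}\ln\lambda)=\mathrm{o}(1).
$$
This is exactly the condition $\eta\vert\ln\eta_1\vert\to0$ of Proposition \ref{prop:normal-quasi}, with $\eta_1=c\lambda^{-1/2}$.

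The second claim needs the regularity of $\beta\mapsto A(\beta,n)$, and this is where I expect the main obstacle. Since $A$ has a pole at $\beta_c$ by \eqref{pole-A}, the claim should be read on the region $\{\beta(m)\geq\bar\beta\}$ with $\bar\beta>\beta_c$, or more precisely on simplices meeting $\{\beta\geq\bar\beta'\}$ for some $\beta_c<\bar\beta'<\bar\beta$. For $\lambda$ large, such simplices lie inside $\{\beta\geq\bar\beta'\}$, because their diameter is $\mathrm{O}(\eta)$ and $\eta\to0$. On the compact interval $[\bar\beta',\bm]$, I would show that $\beta\mapsto A(\beta,n)=\gamma_n\zeta_{\beta,n}(n(1+\beta/2)/2)$ is continuous, and hence uniformly continuous. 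Composing this with $\vert\beta(m)-\beta(m_T)\vert\leq L'\eta\to0$ then gives $A(\beta(m),n)-A(\beta(m_T),n)=\mathrm{o}(1)$ uniformly.

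To prove the continuity of $A$, I would write $\zeta_{\beta,n}(s)=\sum_k\nu_k(\beta)^{-s}$. Each eigenvalue $\nu_k(\beta)$ of $P_1$ depends continuously on $\beta$; this follows from min-max, since the potentials $\frac{\beta n(\beta n+4)}{16x^2}+x^\beta$ vary monotonically and continuously on compact sets. Uniform convergence of the series then comes from a uniform Weyl-type lower bound $\nu_k(\beta)\geq c\,k^{2\beta/(\beta+2)}$ for $\beta\in[\bar\beta',\bm]$. This bound can be obtained by comparing $P_1$ from below with $-\partial_x^2+x^{\bar\beta'}\mathbf 1_{x\geq1}$ and counting via Bohr–Sommerfeld, in the spirit of the heuristic remark in Section \ref{sec:phase+}. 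It makes the terms $\nu_k^{-d_H/2}\lesssim k^{-n\beta/2}$ summable uniformly, since $n\bar\beta'/2>1$. Alternatively, I would simply invoke the analytic dependence of the meromorphic continuation of $\zeta_{\beta,n}$ established in \cite{CDHDT24}, where the only singularity is at $\beta_c$. Either route yields uniform continuity on $[\bar\beta',\bm]$ and finishes the proof.
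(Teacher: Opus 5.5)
Your proof is correct. For the first and third claims it is the paper's argument: a Lipschitz bound $\vert\beta(m)-\beta(m_T)\vert\leq C\eta$, then $\eta\ln\lambda\to0$, using $\vert\ln x\vert\leq\frac12\ln\lambda+\vert\ln c\vert$ on $[c\lambda^{-1/2},b]$.

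The real difference is in the second claim. The paper says it is ``proved similarly'', but the exponential estimate says nothing about $A$. What is actually needed is the following:
\begin{itemize}
\item uniform continuity of $\beta\mapsto A(\beta,n)$ on a compact range $[\bar\beta',\beta_{\mathrm{max}}]\subset(\beta_c,+\infty)$;
\item a restriction to simplices on which $\beta$ stays above $\bar\beta'$.
\end{itemize}
The restriction is forced by the pole \eqref{pole-A}, and it matches how the lemma is used in the proof of Theorem \ref{thm:main}, item 1. Your Weierstrass-test argument (continuity of each $\nu_k(\beta)$ plus a uniform summable majorant) supplies exactly this, so your version is more complete than the paper's one-liner.

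Three small repairs are needed.
\begin{itemize}
\item You swapped $\bar\beta$ and $\bar\beta'$ in one sentence. It is the simplices meeting $\{\beta\geq\bar\beta\}$ that lie in $\{\beta\geq\bar\beta'\}$, once $L'\eta<\bar\beta-\bar\beta'$.
\item Comparison with $x^{\bar\beta'}\mathbf{1}_{x\geq1}$ gives $\nu_k(\beta)\geq c\,k^{2\bar\beta'/(\bar\beta'+2)}$, not the exponent $2\beta/(\beta+2)$. This still suffices: since $\frac{2\bar\beta'}{\bar\beta'+2}\cdot\frac{n(2+\beta)}{4}\geq\frac{n\bar\beta'}{2}>1$, you get the $\beta$-independent majorant $k^{-n\bar\beta'/2}$. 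Alternatively, Lemma \ref{lem:uniform-1d-Neumann} with $a=0$ and $L=+\infty$ gives your stated bound directly.
\item The potentials are indeed increasing in $\beta$ for $\beta>\beta_c$. Monotonicity plus min-max with compactly supported trial functions therefore gives right-continuity of $\nu_k$. Left-continuity, however, needs a lower-semicontinuity (compactness) argument as in Proposition \ref{prop:1d-truncation}. The reason is that, for $\beta'<\beta$, $x^{\beta}$ is not form-bounded by $x^{\beta'}$ at infinity.
\end{itemize}

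Finally, I would drop the fallback of citing analyticity in $\beta$ from \cite{CDHDT24}. The paper only quotes the pole expansion \eqref{pole-A}, not analytic dependence on all of $(\beta_c,+\infty)$.
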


\begin{proof}[Proof of Lemma \ref{lem:freezing-simplex}.]
Since $\beta$ is Lipschitz on $M$, $\vert\beta(m)-\beta(m_T)\vert\leq C\eta$. Hence
$$
\frac{\lambda^{\frac{n}{4}\beta(m)}}{\lambda^{\frac{n}{4}\beta(m_T)}}=\exp\Big(\frac{n}{4}(\beta(m)-\beta(m_T))\ln\lambda\Big)=\exp\big(\mathrm{O}(\eta\ln\lambda)\big)=1+\mathrm{o}(1),
$$
because $\eta\ln\lambda=\lambda^{\gamma-\frac{1}{2}}\ln\lambda\to 0$. The statements for $A(\beta,n)$ and for $x^{-\beta(m)}$ are proved similarly, using $x\geq c\lambda^{-1/2}$ and $\vert\ln x\vert\leq C\ln\lambda$.
\end{proof}

The next lemma, proved in Appendix \ref{app:truncation}, addresses the only place where the proof below cannot rely directly on Theorem \ref{theo:main0}: after freezing the coefficients on a simplex, one is led to a constant-exponent product cell truncated at the wavelength scale $x=c\lambda^{-1/2}$. After separation of variables and scaling, the lower endpoint of the radial interval is proportional to $c$, so $c$ must be taken arbitrarily small in order to reduce to the model operator $P_1(\beta)$ on $(0,+\infty)$.

\begin{lemma}\label{lem_constant-cell}
Fix $\bar\beta\in(\beta_c,+\infty)$, $b>0$, and a finite family $\mathscr S$ of flat simplex shapes in $\R^n$. Let $T$ be a flat simplex whose shape belongs to $\mathscr S$, and let $\beta\in[\bar\beta,\bm]$. Consider the product metric $g_{\beta,T}=dx^2+x^{-\beta}\vert dy\vert^2$ on $\Omega_T=(0,b)\times T$. Let $N_{\Omega_T}^{D/N}(\lambda)$ denote the counting function of the corresponding Friedrichs Laplacian, with Dirichlet or Neumann boundary conditions imposed on $\{x=b\}\cup((0,b)\times\partial T)$. Then
$$
N_{\Omega_T}^{D/N}(\lambda) = A(\beta,n)\, \vert T\vert\,\lambda^{\frac{n}{2}+\frac{n}{4}\beta}\, (1+\mathrm{o}(1))
$$
as $\lambda\to+\infty$, uniformly with respect to $\beta\in[\bar\beta,\bm]$ and to flat simplexes $T$ whose shape belongs to $\mathscr S$ and whose diameter is comparable to $\eta=\lambda^{\gamma-\frac{1}{2}}$, with $\gamma\in(0,\frac{1}{2})$, uniformly with respect to the comparison constants coming from Proposition \ref{prop:triang}.
Moreover, for every $\varepsilon>0$, there exists $c_\varepsilon\in(0,\pi/2)$ such that, for every $c\in(0,c_\varepsilon)$, setting
$$
\Omega_{T,\lambda,c}=[c\lambda^{-1/2},b]\times T,
$$
and denoting by $N_{\Omega_{T,\lambda,c}}^{D/N}(\lambda)$ the counting function with the same boundary conditions and with Dirichlet or Neumann boundary conditions at $x=c\lambda^{-1/2}$, one has
$$
N_{\Omega_{T,\lambda,c}}^{D/N}(\lambda)
=
A(\beta,n)\,\vert T\vert\,\lambda^{\frac{n}{2}+\frac{n}{4}\beta}
\big(1+\mathrm{O}(\varepsilon)+\mathrm{o}(1)\big)
$$
as $\lambda\to+\infty$, uniformly with respect to $\beta\in[\bar\beta,\bm]$ and to flat simplexes $T$ whose shape belongs to $\mathscr S$ and whose diameter is comparable to $\eta=\lambda^{\gamma-\frac{1}{2}}$, with $\gamma\in(0,\frac{1}{2})$, uniformly with respect to the comparison constants coming from Proposition \ref{prop:triang}.
\end{lemma}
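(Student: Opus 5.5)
Because the cell is a product, I will separate variables in $y$. Let $(\mu_j^{D/N}(T))_{j\geq1}$ denote the Dirichlet/Neumann eigenvalues of the flat Laplacian on $T$. The Friedrichs Laplacian of $g_{\beta,T}$ on $\Omega_T$ then decomposes as a direct sum of the one-dimensional operators
$$
P_{\mu}=-\partial_x^2+C(n,\beta)x^{-2}+\mu x^{\beta},\qquad \mu=\mu_j^{D/N}(T),
$$
on $(0,b)$, after the unitary conjugation recalled in the proof of Lemma \ref{le:0ev}. These operators carry a Dirichlet condition at $0$ (Friedrichs) and the prescribed condition at $b$. For $\mu\geq 1$ the scaling $x=\mu^{-1/(\beta+2)}s$ turns $P_\mu$ into $\mu^{2/(\beta+2)}P_1$ on $(0,b\mu^{1/(\beta+2)})$. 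Hence
$$
N_{\Omega_T}^{D/N}(\lambda)=\sum_j n_{j}\big(\lambda\,\mu_j^{-2/(\beta+2)}\big),
$$
where $n_j(E)$ counts the eigenvalues below $E$ of $P_1$ on the interval $(0,b\mu_j^{1/(\beta+2)})$. The plan then has three steps.

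\emph{Step 1 (one-dimensional comparison).} I will show that $n_j(E)$ is close to $\#\{k\mid \nu_k(\beta)\leq E\}$, the count for $P_1$ on $(0,+\infty)$. The bound $n_j(E)\geq\#\{\nu_k\leq E\}(1-o(1))$ in the Dirichlet case, and the reverse bound in the Neumann case, follow from domain monotonicity together with the confinement given by the potential $s^\beta$. Indeed, the eigenfunctions with $\nu_k\leq E$ are exponentially small beyond $s\sim E^{1/\beta}$, and the relevant $E=\lambda\mu_j^{-2/(\beta+2)}$ satisfies $E^{1/\beta}\ll b\mu_j^{1/(\beta+2)}$ exactly when $\mu_j\gg\lambda^{(\beta+2)/(2\beta)}$. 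For the remaining small $j$ I will use a crude Bohr--Sommerfeld bound $n_j(E)\lesssim 1+E^{1/2}\cdot(\text{length})+\dots$, whose total contribution must be shown to be $o(\lambda^{d_H/2}|T|)$. This is where $\beta>\beta_c$ enters: the tail sum $\sum_k\nu_k^{-d_H/2}$ converges uniformly for $\beta\in[\bar\beta,\bm]$, since $\nu_k\gtrsim k^{2\beta/(\beta+2)}$ uniformly by WKB/Lieb--Thirring-type bounds.

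\emph{Step 2 (summing over $j$).} I will write $N=\sum_k\#\{j\mid \mu_j\leq(\lambda/\nu_k)^{(\beta+2)/2}\}$ up to the errors from Step 1, and apply Lemma \ref{le:uniform} on $T$ with $d=n$. This is legitimate as long as $(\lambda/\nu_k)^{(\beta+2)/2}\geq C_\varepsilon\eta^{-2}$. Since $\eta^{-2}=\lambda^{1-2\gamma}\ll\lambda^{(\beta+2)/2}$, this holds for all $k\leq K(\lambda)$ with $K\to\infty$. Lemma \ref{le:uniform} gives $\gamma_n|T|\lambda^{d_H/2}\nu_k^{-d_H/2}(1+O(\varepsilon))$ for each such $k$. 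For the indices $k>K$ I will bound the count by the Dirichlet/Neumann upper estimate $\#\{\mu_j\leq\Lambda\}\leq C(1+|T|\Lambda^{n/2})$ with shape-uniform $C$ (Remark \ref{rem:finite-shapes}). I will then check that the constant terms, summed over $k$ with $\nu_k\leq\lambda\,\mu_1^{-2/(\beta+2)}$, are $o(\lambda^{d_H/2}|T|)$; the key inputs are $\mu_1\gtrsim\eta^{-2}$ in the Dirichlet case and Neumann's zero mode treated separately via the Hardy bound. Summing gives $A(\beta,n)|T|\lambda^{d_H/2}(1+O(\varepsilon)+o(1))$, and letting $\varepsilon\to0$ yields the first claim. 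Uniformity in $\beta$ and in the shape follows because all constants are continuous in $\beta$ on the compact interval $[\bar\beta,\bm]$ and the shapes form a finite family.

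\emph{Step 3 (truncated cell).} Changing the domain to $[c\lambda^{-1/2},b]$ turns the rescaled interval into $(c\lambda^{-1/2}\mu_j^{1/(\beta+2)},\dots)$, whose left endpoint equals $c(\mu_j^{2/(\beta+2)}/\lambda)^{1/2}=cE^{-1/2}$. By min-max with Dirichlet/Neumann bracketing, it then suffices to show that the eigenvalues of $P_1$ on $(\delta,+\infty)$ with either condition at $\delta$ converge to $\nu_k$ as $\delta\to0$, uniformly for $\nu_k\leq E$. For Dirichlet at $\delta$ the eigenvalues only go up, and the convergence follows from the fact that $C_0^\infty(0,\infty)$ is a form core. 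For Neumann at $\delta$, the Hardy term $C(n,\beta)s^{-2}$ together with Lemma \ref{le:0ev}-type arguments rules out spurious low eigenvalues, so only $O(\varepsilon)$ relative error remains once $c<c_\varepsilon$. The main obstacle I anticipate is precisely this Neumann lower cutoff combined with uniformity over all $k$: since $E$ is unbounded, I will split into $\nu_k\leq E_\varepsilon$ (fixed finitely many, where convergence as $c\to0$ suffices) and the tail, which is controlled by the convergent series from Step 1.
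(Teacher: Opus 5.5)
Your skeleton matches the paper's: separation of variables, scaling to $P_1$ on $(0,L_j)$ with $L_j=b\mu_j^{1/(\beta+2)}$, a head/tail split in $k$ controlled by $\sum_k k^{-n\beta/2}$, the bound $\mu_1\gtrsim\eta^{-2}$, and the rescaled left endpoint $cE^{-1/2}\le c\,m_0^{-1/2}$. The difference is how you remove the cutoff at $x=b$: you sort tangential modes $j$ into a confined class and a crude class, and the dividing line you compute is wrong.

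\textbf{The threshold.} Set $E=\lambda\mu_j^{-2/(\beta+2)}$. Since $\frac{1}{\beta+2}+\frac{2}{\beta(\beta+2)}=\frac{1}{\beta}$, the condition $E^{1/\beta}\ll b\mu_j^{1/(\beta+2)}$ is equivalent to $\lambda^{1/\beta}\ll b\mu_j^{1/\beta}$. That is, it means $\mu_j\gg b^{-\beta}\lambda$ (the turning point $(\lambda/\mu_j)^{1/\beta}$ lies well inside $(0,b)$), not $\mu_j\gg\lambda^{(\beta+2)/(2\beta)}$. This is not cosmetic.
\begin{itemize}
\item Contributing modes satisfy $\mu_j\lesssim\lambda^{(\beta+2)/2}$. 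For $\beta<1$, which is allowed as soon as $n\geq 3$ since $\beta_c=2/n<1$, one has $\lambda^{(\beta+2)/(2\beta)}\gg\lambda^{(\beta+2)/2}$, so every contributing mode falls in your crude class.
\item In that class your Bohr--Sommerfeld bound is at best of the same order as the true count, since $E^{1/2}E^{1/\beta}\asymp\#\{k:\nu_k\le E\}$. Its sum is then of order $|T|\lambda^{d_H/2}$ and can never be shown to be $\mathrm{o}(|T|\lambda^{d_H/2})$.
\item Conversely, for $\beta>2$ you would invoke confinement on $\lambda^{(\beta+2)/(2\beta)}\ll\mu_j\lesssim\lambda$. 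There the classically allowed region reaches $x=b$, and the comparison $n_j(E)\approx\#\{\nu_k\le E\}$ is false.
\end{itemize}
The repair is to split at $\mu_j=Mb^{-\beta}\lambda$ with $M$ large.
\begin{itemize}
\item For smaller $\mu_j$, $n_j(E_j)\le 1+\frac{L_j}{\pi}E_j^{1/2}=1+\frac{b}{\pi}\lambda^{1/2}$. Summed over the $\mathrm{O}(|T|\lambda^{n/2})$ such modes, this gives $\mathrm{O}(|T|\lambda^{(n+1)/2})=\mathrm{o}(|T|\lambda^{d_H/2})$, and the half-line model count over these modes is of the same size. This step, and not only the convergence of $\sum_k\nu_k^{-d_H/2}$, is where $\beta>\beta_c$ enters.
\item For larger $\mu_j$ one has $L_j^\beta\ge M E_j$, so confinement is available.
\end{itemize}

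Two further steps need real arguments.

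\emph{Neumann upper bound.} The bound $n_j^N(E)\lesssim\#\{k:\nu_k\le E\}$ follows neither from domain monotonicity nor from decay of the half-line eigenfunctions. Apply an IMS partition at scale $L_j$ to the Neumann form itself. This gives $n_j^N(E)\le\#\{k:\nu_k\le E+CL_j^{-2}\}$ whenever $(L_j/2)^\beta>E+CL_j^{-2}$, and the Dirichlet lower bound is the symmetric statement. These are energy shifts rather than the relative count errors you wrote. They are harmless after summation because $E_j\ge m_0$ and $L_j^\beta\ge ME_j$.

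\emph{Tail in Step 3.} The tail must be controlled for the truncated operators on $(cE^{-1/2},L_j)$, whose eigenvalues are not a priori comparable to the half-line $\nu_k$. The convergent series from Step 1 does not do this. You need $\nu_k^{D/N}(\beta,a,L)\ge C^{-1}k^{2\beta/(\beta+2)}$ uniformly in the interval, which is exactly Lemma \ref{lem:uniform-1d-Neumann}: drop $Cs^{-2}$ and bracket on unit intervals.

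\emph{Comparison with the paper.} The paper avoids all $j$-dependent bookkeeping by organizing by $k$. For the head it uses fixed-$k$ convergence (Proposition \ref{prop:1d-truncation}, with $L_j\to+\infty$ uniformly). For the tail it uses this interval-uniform lower bound. Once corrected, your route gives an energy-uniform comparison and isolates explicitly the classical $\mathrm{O}(\lambda^{(n+1)/2})$ contribution of the region near $x=b$.
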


\begin{proof}[Proof of the first item of Theorem \ref{thm:main}.]
Fix $\bar\beta\in(\beta_c,\bm)$ and $\varepsilon\in(0,1)$. 
Choose $b\in(0,\min(a,1))$ as in Section \ref{sec:normal}.
By Theorem \ref{theo:normal} and Proposition \ref{prop:normal-quasi}, we may replace $g$ by the model metric $g_0=dx^2+x^{-\beta(m)}h_0$ on $X_b=(0,b)\times M$. We also apply Proposition \ref{prop:triang} to $h_0$ with mesh size $\eta=\lambda^{\gamma-\frac{1}{2}}$, $\gamma\in(0,\frac{1}{2})$: this gives a triangulation $\mathcal T_\eta$ of $M$ into simplices of diameter at most $\eta$ and a polyhedral metric $h_\eta$ which is $\varepsilon$-quasi-isometric to $h_0$. Let $\mathscr S$ be the corresponding finite family of simplex shapes.

Let $c_\varepsilon$ be given by Lemma \ref{lem_constant-cell} for this family $\mathscr S$, and choose $c\in(0,c_\varepsilon)$. In particular $c<\pi/2$, so Lemma \ref{le:0ev} applies.

For $\lambda$ large, Lemma \ref{le:0ev} shows that the inner collar $X_{c\lambda^{-1/2}}$ contributes no eigenvalue below $\lambda$. Hence, by Dirichlet-Neumann bracketing across the artificial boundary $\{x=c\lambda^{-1/2}\}$,
$$
N_{X\setminus X_{c\lambda^{-1/2}}}^D(\lambda)\leq N_X(\lambda)\leq N_{X\setminus X_{c\lambda^{-1/2}}}^N(\lambda).
$$
Splitting again at the fixed hypersurface $\{x=b\}$ and applying bracketing once more, we obtain
$$
N_{X_b\setminus X_{c\lambda^{-1/2}}}^D(\lambda)+N_{X\setminus X_b}^D(\lambda)\leq N_X(\lambda)\leq N_{X_b\setminus X_{c\lambda^{-1/2}}}^N(\lambda)+N_{X\setminus X_b}^N(\lambda).
$$
Since $X\setminus X_b$ is a smooth compact manifold with boundary, the classical Weyl law gives
$$
N_{X\setminus X_b}^N(\lambda)=\bigO(\lambda^{\frac{n+1}{2}})=\mathrm{o}(\lambda^{\frac{n}{2}+\frac{n}{4}\bar\beta}),
$$
as $\lambda\to+\infty$, because $\bar\beta>\beta_c=\frac{2}{n}$. Therefore it is enough to estimate the truncated collar $X_b\setminus X_{c\lambda^{-1/2}}$ with Dirichlet or Neumann conditions on the artificial boundaries.

For each simplex $T_\eta\in\mathcal T_\eta$, set $\Omega_{T_\eta,\lambda,c}=[c\lambda^{-1/2},b]\times T_\eta$.
Denote by $N_{X_b\setminus X_{c\lambda^{-1/2}}}^{D/N}(\lambda)$ the counting function corresponding either to Dirichlet or to Neumann boundary conditions. Applying Dirichlet-Neumann bracketing across the internal faces of the triangulation, we obtain
$$
\sum_{T_\eta\in\mathcal T_\eta}N_{\Omega_{T_\eta,\lambda,c}}^D(\lambda)\leq N_{X_b\setminus X_{c\lambda^{-1/2}}}^{D/N}(\lambda)\leq\sum_{T_\eta\in\mathcal T_\eta}N_{\Omega_{T_\eta,\lambda,c}}^N(\lambda).
$$

\noindent
\emph{Simplices with $\beta(m_{T_\eta})\geq\bar\beta$.}
Let $T_\eta$ be a simplex with basepoint $m_{T_\eta}$ such that $\beta(m_{T_\eta})\geq\bar\beta$. Since $\eta\vert\ln(c\lambda^{-1/2})\vert\to 0$, Proposition \ref{prop:normal-quasi} allows us to freeze $\beta$ on $T_\eta$ up to a factor $1+\bigO(\varepsilon)$, and Lemma \ref{lem_constant-cell} yields, uniformly in $T_\eta$,
\begin{equation}\label{eq:groesser_als_betac}
N_{\Omega_{T_\eta,\lambda,c}}^{D/N}(\lambda)=A(\beta(m_{T_\eta}),n)\,\vol_{h_\eta}(T_\eta)\,\lambda^{\frac{n}{2}+\frac{n}{4}\beta(m_{T_\eta})} \big(1+\bigO(\varepsilon)+\mathrm{o}(1)\big).
\end{equation}

\noindent
\emph{Simplices with $\beta(m_{T_\eta})<\bar\beta$.}
If $\beta(m_{T_\eta})<\bar\beta$, then on $[c\lambda^{-1/2},b]$ one has $x^{\beta(m_{T_\eta})}\geq x^{\bar\beta}$ since $b<1$, and of course $C(n,\beta(m_{T_\eta}))x^{-2}\geq 0$. Therefore,
$$
-\partial_x^2+C(n,\beta(m_{T_\eta}))x^{-2}+x^{\beta(m_{T_\eta})}\triangle_{T_\eta}\geq -\partial_x^2+x^{\bar\beta}\triangle_{T_\eta}.
$$
The operator on the right is even simpler than the constant exponent model with exponent $\bar\beta$, and a similar separation of variables argument as in Theorem \ref{theo:main0} gives the uniform bound
\begin{equation}\label{eq:kleiner_als_betac}
N_{\Omega_{T_\eta,\lambda,c}}^{D/N}(\lambda)=\bigO\Big(\vol_{h_\eta}(T_\eta)\lambda^{\frac{n}{2}+\frac{n}{4}\bar\beta}\Big).
\end{equation}
In particular, no extra assumption $\alpha>0$ is needed: if $\beta(m_{T_\eta})=0$, the inverse-square term simply disappears.

Summing \eqref{eq:groesser_als_betac} over the simplices with $\beta(m_{T_\eta})\geq\bar\beta$ and using \eqref{eq:kleiner_als_betac} for the remaining ones, we get
$$
N_{X_b\setminus X_{c\lambda^{-1/2}}}^{D/N}(\lambda) = \sum_{\stackrel{T_\eta\in\mathcal T_\eta}{\beta(m_{T_\eta})\geq\bar\beta}} A(\beta(m_{T_\eta}),n)\,\vol_{h_\eta}(T_\eta)\,\lambda^{\frac{n}{2}+\frac{n}{4}\beta(m_{T_\eta})}
\big(1+\bigO(\varepsilon)+\mathrm{o}(1)\big) + \bigO(\lambda^{\frac{n}{2}+\frac{n}{4}\bar\beta}) .
$$

Finally, Lemma \ref{lem:freezing-simplex} allows us to replace $\beta(m_{T_\eta})$ by $\beta(m)$ inside each simplex at the price of a relative $1+\bigO(\varepsilon)$ error, and $\varepsilon$-quasi-isometry gives $\vol_{h_\eta}(T_\eta)=(1+\bigO(\varepsilon))\vol_{h_0}(T_\eta)$. 
Moreover, because $\beta$ is Lipschitz and the simplices have diameter $\mathrm{O}(\eta)$, the union of simplices with $\beta(m_{T_\eta})\geq\bar\beta$ is contained in $\{\beta\geq\bar\beta-C\eta\}$ and contains $\{\beta\geq\bar\beta+C\eta\}$ for some $C>0$. Since $\bar\beta>\beta_c$ is fixed, the coefficient $A(\beta,n)$ is uniformly bounded on the strip $\{\bar\beta-C\eta\leq \beta\leq \bar\beta+C\eta\}$ for $\lambda$ large enough. Moreover, on this strip one has
$\lambda^{\frac{n}{2}+\frac{n}{4}\beta(m)} \leq \lambda^{\frac{n}{2}+\frac{n}{4}\bar\beta}\exp\big(\frac{nC}{4}\eta\ln\lambda\big) = \mathrm{O}\big(\lambda^{\frac{n}{2}+\frac{n}{4}\bar\beta}\big)$,
because $\eta\ln\lambda\to 0$. Summing over the simplices under consideration, whose total $h_0$-volume is uniformly bounded, we obtain that the contribution of the strip is $\mathrm{O}\big(\lambda^{\frac{n}{2}+\frac{n}{4}\bar\beta}\big)$, hence lower order.
Therefore the main sum is a Riemann sum for $\int_{\{\beta\geq\bar\beta\}}A(\beta(m),n)\,\lambda^{\frac{n}{2}+\frac{n}{4}\beta(m)}\,d\vol_{h_0}(m)$, up to a relative $\mathrm{o}(1)$ error.
Since the same main term is obtained for both Dirichlet and Neumann boundary conditions, the initial bracketing gives the desired asymptotic up to a relative error $\mathrm{O}(\varepsilon)+\mathrm{o}(1)$. Since $\varepsilon>0$ was arbitrary, letting $\varepsilon\to 0$ concludes the proof.

\end{proof}

\subsection{The case $\beta \leq \beta_c $}\label{sec:phase-}

\begin{proof}[Proof of the second item of Theorem \ref{thm:main}.]
Assume $\bm\leq\beta_c$. As in the supercritical case, by Theorem \ref{theo:normal}, Proposition \ref{prop:normal-quasi} and quasi-isometry stability (Appendix \ref{app:quasi}), we work with the model metric $g_0=dx^2+x^{-\beta(m)}h_0$ on a collar $X_a$. Choose $c\in(0,\pi/2)$ as in Lemma \ref{le:0ev} and fix $b\in(0,a)$.

Dirichlet-Neumann bracketing and Lemma \ref{le:0ev} yield, for $\lambda$ large enough,
$$
N_{X\setminus X_{c\lambda^{-1/2}}}^D(\lambda)\leq N_X(\lambda)\leq N_{X\setminus X_{c\lambda^{-1/2}}}^N(\lambda),
$$
where the superscripts indicate Dirichlet or Neumann boundary conditions imposed on the artificial boundary $\{x=c\lambda^{-1/2}\}$.
Applying bracketing again to the decomposition $X\setminus X_{c\lambda^{-1/2}}=(X\setminus X_b)\cup(X_b\setminus X_{c\lambda^{-1/2}})$ across the artificial boundary $\{x=b\}$ yields
$$
N_{X\setminus X_b}^D(\lambda)+N_{X_b\setminus X_{c\lambda^{-1/2}}}^D(\lambda)\leq N_X(\lambda)\leq N_{X\setminus X_b}^N(\lambda)+N_{X_b\setminus X_{c\lambda^{-1/2}}}^N(\lambda).
$$
The region $X\setminus X_b$ is smooth with finite volume, hence by the classical Weyl law, for Dirichlet and Neumann boundary conditions on $\partial(X\setminus X_b)$,
$$
N_{X\setminus X_b}^{D/N}(\lambda) = \gamma_{n+1}\,\lambda^{\frac{n+1}{2}}\,\vol_g(X\setminus X_b)+\mathrm{o}(\lambda^{\frac{n+1}{2}}).
$$
It remains to estimate the collar contribution $N_{X_b\setminus X_{c\lambda^{-1/2}}}^{D/N}(\lambda)$.

Fix $\varepsilon\in(0,1)$ and choose $\gamma\in(0,\frac{1}{2})$. Set $\eta=\lambda^{\gamma-\frac{1}{2}}$ and let $\mathcal T_\eta$ and $h_\eta$ be given by Proposition \ref{prop:triang} applied to $h_0$. Choose $C_\varepsilon>0$ large enough, depending only on $\varepsilon$ and on the finite set of shapes, and set $\zeta=C_\varepsilon\lambda^{-1/2}$. Decompose $[c\lambda^{-1/2},b]$ into intervals
$$
I_k=[c\lambda^{-1/2}+k\zeta,c\lambda^{-1/2}+(k+1)\zeta],\qquad k=0,\dots,N_\lambda,
$$
where $N_\lambda$ is the largest integer such that $c\lambda^{-1/2}+N_\lambda\zeta\leq b<c\lambda^{-1/2}+(N_\lambda+1)\zeta$.
Then $N_\lambda\simeq b/\zeta$ as $\lambda\to+\infty$. For each simplex $T_\eta\in\mathcal T_\eta$ and each $k$, set $B_{k,T_\eta}=I_k\times T_\eta$.

On $B_{k,T_\eta}$, freeze the coefficients by picking $m_{T_\eta}\in T_\eta$ and $x_k=c\lambda^{-1/2}+k\zeta$, and consider the constant coefficient metric
$$
g_{k,T_\eta}=dx^2+x_k^{-\beta(m_{T_\eta})}h_\eta\vert_{T_\eta}.
$$
Since $\beta\in C^1(M)$ and $h_\eta$ is $\varepsilon$-quasi-isometric to $h_0$, for $\lambda$ large the metrics $g_0$ and $g_{k,T_\eta}$ are $\varepsilon$-quasi-isometric on $B_{k,T_\eta}$, uniformly in $k$ and $T_\eta$.

By an affine chart sending $T_\eta$ to a simplex in $\R^n$ and by the rescaling
$$
x=x_k+\zeta t,\qquad y=\zeta x_k^{\beta(m_{T_\eta})/2}z,
$$
the box $(B_{k,T_\eta},g_{k,T_\eta})$ becomes isometric, after rescaling lengths by the factor $\zeta$ (equivalently, after factoring out $\zeta^2$ from the metric), to the Euclidean domain
$$
D_{k,T_\eta}=[0,1]\times\left(\frac{\eta}{\zeta}x_k^{-\beta(m_{T_\eta})/2}\right)T_0\subset\R^{n+1},
$$
and the spectral parameter rescales to $\mu=\lambda\zeta^2=C_\varepsilon^2$.
Since $\eta/\zeta=\lambda^\gamma/C_\varepsilon\to+\infty$, the tangential scaling factor in $D_{k,T_\eta}$ tends to $+\infty$ uniformly in $k$ and $T_\eta$. We subdivide the tangential simplex in $D_{k,T_\eta}$ into simplices of diameter at most $1$ using the Freudenthal subdivision. This yields a disjoint union of cells whose shapes belong to a finite family independent of $k$, $T_\eta$ and $\lambda$. Dirichlet-Neumann bracketing across the internal faces and Lemma \ref{le:uniform} (applied with $d=n+1$ and $\mu=C_\varepsilon^2$) give
$$
N_{B_{k,T_\eta}}^{D/N}(\lambda)=\gamma_{n+1}\vol_{g_{k,T_\eta}}(B_{k,T_\eta})\lambda^{\frac{n+1}{2}}(1+\bigO(\varepsilon)),
$$
uniformly in $k$ and $T_\eta$.

Summing over $k$ and $T_\eta\in\mathcal T_\eta$ and using once more quasi-isometry and Dirichlet-Neumann bracketing yields
$$
N_{X_b\setminus X_{c\lambda^{-1/2}}}^{D/N}(\lambda) = \gamma_{n+1} \, \lambda^{\frac{n+1}{2}} \, \vol_g(X_b\setminus X_{c\lambda^{-1/2}}) + \bigO(\varepsilon\lambda^{\frac{n+1}{2}}).
$$
Combining with the interior contribution gives
$$
N_X(\lambda) = \gamma_{n+1} \, \lambda^{\frac{n+1}{2}} \, \vol_g(X\setminus X_{c\lambda^{-1/2}}) + \bigO(\varepsilon\lambda^{\frac{n+1}{2}}) + \mathrm{o}(\lambda^{\frac{n+1}{2}}).
$$
Finally, Lemma \ref{lemm:uniform} below shows that replacing $c\lambda^{-1/2}$ by $\lambda^{-1/2}$ in the truncated volume only affects lower order terms when $\bm\leq\beta_c$. Letting $\varepsilon\to0$ proves the claim.
\end{proof}

%

\begin{lemma}\label{lemm:uniform}
Given $\gamma\in[0,1]$ and $0<c<C$, let
$$
Q(\lambda)=\frac{\int_c^C \frac{dx}{x^{1-\gamma}}}{\int_c^{\sqrt{\lambda}} \frac{dx}{x^{1-\gamma}}}.
$$
Then $Q(\lambda)\to 0$ uniformly in $\gamma$ as $\lambda\to+\infty$. 
\end{lemma}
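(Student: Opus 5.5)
The plan is to bound the numerator uniformly from above and the denominator uniformly from below, exploiting the monotonicity in $\gamma$ of the integrand $x^{\gamma-1}$ on the two regions $x\leq 1$ and $x\geq 1$. We may assume $\lambda$ is large, so that $\sqrt{\lambda}>\max(C,1)$.

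\emph{Numerator.} For $x\in[c,C]$ we have $x^{\gamma-1}\leq\max(c^{-1},1)\max(1,C)$ for every $\gamma\in[0,1]$. Hence the numerator is bounded by a constant $K=K(c,C)$ that does not depend on $\gamma$.

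\emph{Denominator.} We discard the piece over $[c,1]$ if $c<1$. This is legitimate because the integrand is positive. It then suffices to bound $\int_{\max(c,1)}^{\sqrt\lambda}x^{\gamma-1}\,dx$ from below. For $x\geq1$ and $\gamma\geq0$ we have $x^{\gamma-1}\geq x^{-1}$. Therefore
$$
\int_c^{\sqrt\lambda}\frac{dx}{x^{1-\gamma}}\geq \int_{\max(c,1)}^{\sqrt\lambda}\frac{dx}{x}=\tfrac12\ln\lambda-\ln\max(c,1),
$$
and this lower bound is uniform in $\gamma\in[0,1]$. It tends to $+\infty$ as $\lambda\to+\infty$.

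\emph{Conclusion.} Combining the two bounds gives
$$
0\leq Q(\lambda)\leq \frac{K}{\tfrac12\ln\lambda-\ln\max(c,1)}\longrightarrow0,
$$
uniformly in $\gamma$. No step is a real obstacle. The only point to watch is the worst case $\gamma=0$, where the divergence of the denominator is merely logarithmic; the comparison with $x^{-1}$ on $[1,\sqrt\lambda]$ handles this case and every larger $\gamma$ simultaneously.
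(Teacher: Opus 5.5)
Your proof is correct, and it takes a slightly different route from the paper's.

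The paper evaluates both integrals in closed form. It obtains $Q(\lambda)=\frac{(C/c)^\gamma-1}{(\sqrt{\lambda}/c)^\gamma-1}$ for $\gamma\in(0,1]$ and treats $\gamma=0$ separately. It then applies $t\leq e^t-1\leq te^t$ to get the explicit bound $Q(\lambda)\leq\frac{(C/c)\ln(C/c)}{\ln(\sqrt{\lambda}/c)}$.

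You never integrate. You bound the integrand on $[c,C]$ uniformly in $\gamma$, and you use $x^{\gamma-1}\geq x^{-1}$ on $[1,\sqrt{\lambda}]$.

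The two arguments are closely related. After the substitution $x=cy$, the paper's inequality $e^t-1\geq t$ with $t=\gamma\ln(\sqrt{\lambda}/c)$ is exactly $\int_1^{\sqrt{\lambda}/c}y^{\gamma-1}\,dy\geq\int_1^{\sqrt{\lambda}/c}y^{-1}\,dy$. That is your comparison with the case $\gamma=0$.

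\begin{itemize}
\item \textbf{Your version} needs no closed form and no case distinction at $\gamma=0$. It makes it transparent that $\gamma=0$ is the worst case. It would also apply unchanged to any family of weights that is uniformly bounded above on $[c,C]$ and bounded below by $x^{-1}$ on $[1,+\infty)$.
\item \textbf{The paper's version} benefits from the cancellation of the factor $c^\gamma$ in the ratio. This gives a scale-invariant bound depending only on $C/c$ and $\sqrt{\lambda}/c$, and it avoids splitting the integral at $x=1$.
\end{itemize}
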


\begin{proof}
If $\gamma=0$, then $Q(\lambda)=\frac{\ln(C/c)}{\ln(\sqrt{\lambda}/c)}\to 0$ as $\lambda\to+\infty$. 
Assume now $\gamma\in(0,1]$. We compute
$$
Q(\lambda)=\frac{(C/c)^\gamma-1}{(\sqrt{\lambda}/c)^\gamma-1}.
$$
Write $t=\gamma\ln(\sqrt{\lambda}/c)\geq 0$. Then $e^t-1\geq t$, hence $(\sqrt{\lambda}/c)^\gamma-1 \ge \gamma\ln(\sqrt{\lambda}/c)$. Similarly,
$$
(C/c)^\gamma-1 = e^{\gamma\ln(C/c)}-1 \leq \gamma\ln(C/c)\,e^{\gamma\ln(C/c)} \leq \gamma\ln(C/c)\,(C/c).
$$
Therefore, $Q(\lambda)\leq \frac{(C/c)\ln(C/c)}{\ln(\sqrt{\lambda}/c)}\to 0$ uniformly in $\gamma\in(0,1]$.
\end{proof}

\section{Proof of Corollary \ref{co:codimension}}\label{proof_cor_co:codimension}

\begin{proof}
1. Assume $\bm>\beta_c$. Fix $\bar\beta\in(\beta_c,\bm)$. By Theorem \ref{thm:main}, item 1,
$$
N(\lambda)\sim\int_{\{\beta\geq\bar\beta\}}A(\beta(m),n)\lambda^{\frac{n}{2}+\frac{n}{4}\beta(m)}\,d\vol_{h_0}(m).
$$
Factorizing the maximum exponent yields
$$
N(\lambda)\sim \lambda^{\frac{n}{2}+\frac{n}{4}\bm}
\int_{\{\beta\geq\bar\beta\}}A(\beta(m),n)\exp\left(\tau(\beta(m)-\bm)\right)\,d\vol_{h_0}(m),
\qquad
\tau=\frac{n}{4}\ln\lambda.
$$
Since the Laplace integral localizes near $W$ as $\tau\to+\infty$ and $A(\beta,n)$ is continuous on $[\bar\beta,\bm]$, we may replace $A(\beta(m),n)$ by $A(\bm,n)$ up to a relative $\mathrm{o}(1)$ error inside the integral.
Applying the Morse-Bott Laplace method (Proposition \ref{prop:gauss} of Appendix \ref{app:gauss}) to $f=\beta$ and $a\equiv 1$ gives
$$
\int_{\{\beta\geq\bar\beta\}}\exp\left(\tau(\beta(m)-\bm)\right)\,d\vol_{h_0}(m)
\sim (2\pi)^{\frac{d}{2}}\tau^{-\frac{d}{2}}
\int_W \frac{d\vol_{h_0\vert_W}(w)}{\sqrt{\det Q(w)}}.
$$
Since $\tau^{-\frac{d}{2}}=\big(\frac{n}{4}\big)^{-\frac{d}{2}}(\ln\lambda)^{-\frac{d}{2}}$, the first claim follows.

\medskip
\noindent
2. Assume $\bm=\beta_c=\frac{2}{n}$. By Theorem \ref{thm:main}, item 2,
$$
N(\lambda)\sim\gamma_{n+1}\lambda^{\frac{n+1}{2}}\vol_g(\{\dist(\cdot,M)>\lambda^{-1/2}\}).
$$
By Theorem \ref{theo:normal} and the quasi-isometry stability in Appendix \ref{app:quasi}, the truncated volume can be computed on the model metric $g_0=dx^2+x^{-\beta(m)}h_0$ up to a relative $\mathrm{o}(1)$ error. In a collar neighborhood $(0,b)\times M$, we have $d\vol_{g_0}=x^{-\frac{n}{2}\beta(m)}\,dx\,d\vol_{h_0}(m)$.
Hence
$$
\vol_{g_0}(\{\dist(\cdot,M)>\lambda^{-1/2}\})
=
\int_{\lambda^{-1/2}}^b\int_M x^{-\frac{n}{2}\beta(m)}\,d\vol_{h_0}(m)\,dx+\mathrm{O}(1).
$$
Since $\bm=\frac{2}{n}$, we write
$x^{-\frac{n}{2}\beta(m)}=x^{-1}\exp\left(-\frac{n}{2}(\bm-\beta(m))\vert\ln x\vert\right)$.
Near $W$, choose $h_0$-normal coordinates $(w,z)\in W\times\R^d$ so that
$$
\beta(w,z)=\bm-\frac{1}{2}\langle Q(w)z,z\rangle+\mathrm{O}(\Vert z\Vert^3).
$$
Setting $t=\vert\ln x\vert$, we obtain uniformly for large $t$,
$$
\int_M x^{-\frac{n}{2}\beta(m)}\,d\vol_{h_0}(m) = x^{-1}(2\pi)^{\frac{d}{2}}\Big(\frac{n}{2}\Big)^{-\frac{d}{2}}t^{-\frac{d}{2}} \int_W \frac{d\vol_{h_0\vert_W}(w)}{\sqrt{\det Q(w)}}+\mathrm{O}\big(x^{-1}t^{-\frac{d+1}{2}}\big),
$$
by Gaussian integration in the normal directions. Therefore,
$$
\vol_{g_0}(\{\dist(\cdot,M)>\lambda^{-1/2}\}) = (2\pi)^{\frac{d}{2}}\Big(\frac{n}{2}\Big)^{-\frac{d}{2}} \int_W \frac{d\vol_{h_0\vert_W}(w)}{\sqrt{\det Q(w)}} \int_{\vert\ln b\vert}^{\frac{1}{2}\ln\lambda+\mathrm{O}(1)} t^{-\frac{d}{2}}\,dt+\mathrm{O}(1).
$$
If $d=1$, the $t$-integral is asymptotic to $\sqrt{2}\sqrt{\ln\lambda}$, hence
$$
\vol_{g_0}(\{\dist(\cdot,M)>\lambda^{-1/2}\}) \sim 2\sqrt{\frac{2\pi}{n}}\sqrt{\ln\lambda}\int_W \frac{d\vol_{h_0\vert_W}(w)}{\sqrt{\det Q(w)}}.
$$
If $d=2$, the $t$-integral is asymptotic to $\ln(\ln\lambda)$, hence
$$
\vol_{g_0}(\{\dist(\cdot,M)>\lambda^{-1/2}\})
\sim \frac{4\pi}{n}\ln(\ln\lambda)\int_W \frac{d\vol_{h_0\vert_W}(w)}{\sqrt{\det Q(w)}}.
$$
If $d>2$, the $t$-integral converges as $\lambda\to+\infty$, so $\vol_g(X)<+\infty$ and the third claim follows.
Multiplying by $\gamma_{n+1}\lambda^{\frac{n+1}{2}}$ concludes the proof of item 2.

\medskip
\noindent
3. Assume $\bm<\beta_c=\frac{2}{n}$. Then $x\mapsto x^{-\frac{n}{2}\beta(m)}$ is integrable near $x=0$ uniformly in $m$, hence $\vol_g(X)<+\infty$. By Theorem \ref{thm:main}, item 2,
$$
N(\lambda)\sim\gamma_{n+1}\lambda^{\frac{n+1}{2}}\vol_g(\{\dist(\cdot,M)>\lambda^{-1/2}\}).
$$
Since $\vol_g(\{\dist(\cdot,M)>\lambda^{-1/2}\})\to\vol_g(X)$ as $\lambda\to+\infty$, the claimed asymptotic follows.
\end{proof}

\appendix

\section{Appendix}

\subsection{Quasi-isometries}\label{app:quasi}

Let $Y$ be a compact manifold (possibly with boundary) of dimension $d$ and let $g_1,g_2$ be two Riemannian metrics on $Y$. 

\begin{lemma}\label{lemm:quasi}
Assume that $K^{-1}g_1\leq g_2\leq Kg_1$ as quadratic forms. Denote by $\lambda_j(g_i)$ the eigenvalues of the Friedrichs Laplacian $\triangle_{g_i}$ (Dirichlet or Neumann boundary conditions on $\partial Y$ if $\partial Y\ne\emptyset$). Then 
$$
K^{-1-d}\lambda_j(g_1)\leq\lambda_j(g_2)\leq K^{1+d}\lambda_j(g_1) \qquad\forall j\geq 1.
$$
\end{lemma}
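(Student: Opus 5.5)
The plan is to use the min-max characterization of the eigenvalues of the Friedrichs Laplacian as Rayleigh quotients. For $u$ in the form domain,
$$
R_{g_i}(u)=\frac{\int_Y \vert du\vert_{g_i}^2\,d\vol_{g_i}}{\int_Y \vert u\vert^2\,d\vol_{g_i}} .
$$
The first step is to show that the form domains coincide. For the Dirichlet condition (or the Friedrichs extension with core $C_0^\infty$) this is the closure of $C_0^\infty(Y\setminus\partial Y)$. For the Neumann condition it is the full $H^1$ space. These spaces are the same for $g_1$ and $g_2$, because the norms they define are equivalent; this follows from the pointwise estimates below. Hence the min-max characterization
$$
\lambda_j(g_i)=\inf_{\dim V=j}\ \sup_{u\in V\setminus\{0\}} R_{g_i}(u)
$$
runs over the same family of subspaces $V$ for $i=1,2$.

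The second step is a set of pointwise linear-algebra comparisons. From $K^{-1}g_1\leq g_2\leq Kg_1$ on tangent vectors, dualizing gives $K^{-1}g_1^{-1}\leq g_2^{-1}\leq Kg_1^{-1}$ on covectors, so
$$
K^{-1}\vert du\vert_{g_1}^2\leq \vert du\vert_{g_2}^2\leq K\vert du\vert_{g_1}^2 .
$$
For the volume densities, $d\vol_{g_i}=\sqrt{\det g_i}\,dy$ in local coordinates. Diagonalizing $g_2$ relative to $g_1$, all eigenvalues of $g_1^{-1}g_2$ lie in $[K^{-1},K]$, which gives
$$
K^{-d/2}\,d\vol_{g_1}\leq d\vol_{g_2}\leq K^{d/2}\,d\vol_{g_1}.
$$

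The third step combines these estimates. The numerator of $R_{g_2}$ is at most $K\cdot K^{d/2}$ times the numerator of $R_{g_1}$, and the denominator of $R_{g_2}$ is at least $K^{-d/2}$ times the denominator of $R_{g_1}$. Therefore
$$
R_{g_2}(u)\leq K^{1+d}R_{g_1}(u).
$$
Symmetrically, $R_{g_2}(u)\geq K^{-1-d}R_{g_1}(u)$. Taking the supremum over each $V$ and then the infimum over all $V$ gives the stated bounds on $\lambda_j(g_2)$ for every $j\geq1$.

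No real obstacle is expected. The only point needing care is the singular setting, where $Y$ may be a collar with degenerate metrics. There one must check that the Friedrichs realizations for $g_1$ and $g_2$ have the same form domain, namely the same completion of $C_0^\infty$. This follows from the equivalence of the quadratic forms $\int\vert du\vert^2+\int\vert u\vert^2$, which is exactly what the second step establishes.
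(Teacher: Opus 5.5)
Your proposal is correct and follows essentially the same route as the paper: you compare $\vert du\vert^2$ and the volume densities pointwise, obtain $R_{g_2}\leq K^{1+d}R_{g_1}$ and the symmetric bound, and conclude by min-max. Your explicit check that the two form domains coincide, via equivalence of the norms, is a point the paper leaves implicit.
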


\begin{proof}
For any $u\in H^1(Y)$ (with the relevant boundary condition), we have
$\Vert\nabla u\Vert_{g_2}^2\leq K\Vert\nabla u\Vert_{g_1}^2$ and $d\vol_{g_2}\leq K^{\frac{d}{2}}\,d\vol_{g_1}$, hence
$$
\int_Y\Vert\nabla u\Vert_{g_2}^2\,d\vol_{g_2}\leq K^{1+\frac{d}{2}}\int_Y\Vert\nabla u\Vert_{g_1}^2\,d\vol_{g_1}.
$$
Moreover $d\vol_{g_2}\geq K^{-\frac{d}{2}}d\vol_{g_1}$, hence
$$
\int_Y u^2\,d\vol_{g_2}\geq K^{-\frac{d}{2}}\int_Y u^2\,d\vol_{g_1}.
$$
Therefore the Rayleigh quotients satisfy
$$
\frac{\int_Y\Vert\nabla u\Vert_{g_2}^2\,d\vol_{g_2}}{\int_Y u^2\,d\vol_{g_2}}
\leq
K^{1+d}\frac{\int_Y\Vert\nabla u\Vert_{g_1}^2\,d\vol_{g_1}}{\int_Y u^2\,d\vol_{g_1}},
$$
and similarly with $g_1$ and $g_2$ exchanged. The lemma follows from the min-max principle.
\end{proof}

\begin{definition}\label{defi:quasi}  
Given $\varepsilon>0$, we say that two metrics $g_1$ and $g_2$ are $\varepsilon$-quasi-isometric if $K^{-1}g_1\leq g_2\leq Kg_1$ as quadratic forms with $K=1+\varepsilon$.
\end{definition}

\begin{lemma}\label{lem:counting-quasi}
There exists a constant $C>0$ depending only on $d=\dim(Y)$ such that the following holds. If $g_1$ and $g_2$ are $(1+\varepsilon)$-quasi-isometric with $\varepsilon\in(0,1)$, then
$$
N_{g_1}\left(\frac{\lambda}{1+C\varepsilon}\right)\leq N_{g_2}(\lambda)\leq N_{g_1}\big((1+C\varepsilon)\lambda\big)
\qquad \forall \lambda>0.
$$
\end{lemma}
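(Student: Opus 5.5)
The plan is to deduce the counting-function comparison from the eigenvalue comparison of Lemma \ref{lemm:quasi}, via the min-max principle. If $g_1$ and $g_2$ are $(1+\varepsilon)$-quasi-isometric, then Lemma \ref{lemm:quasi} applies with $K=1+\varepsilon$ and gives, for every $j\geq1$ and for the same boundary condition (Dirichlet or Neumann, or Friedrichs with the same core) on both sides,
$$
(1+\varepsilon)^{-1-d}\lambda_j(g_1)\leq\lambda_j(g_2)\leq(1+\varepsilon)^{1+d}\lambda_j(g_1).
$$
Here it matters that the form domains coincide. Since $g_1$ and $g_2$ are uniformly comparable, the $H^1$ norms and $L^2$ norms attached to them are equivalent, so the two closures of the core (or the two $H^1(Y)$ spaces) agree. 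This is what allows the min-max principle to be applied to the same family of test subspaces for both metrics.

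Next I convert the eigenvalue bounds into counting-function bounds. Fix $\lambda>0$ and set $K_d=(1+\varepsilon)^{1+d}$. If $\lambda_j(g_2)\leq\lambda$, then $\lambda_j(g_1)\leq K_d\lambda_j(g_2)\leq K_d\lambda$. Hence $N_{g_2}(\lambda)\leq N_{g_1}(K_d\lambda)$. In the other direction, if $\lambda_j(g_1)\leq\lambda/K_d$, then $\lambda_j(g_2)\leq K_d\lambda_j(g_1)\leq\lambda$, so $N_{g_1}(\lambda/K_d)\leq N_{g_2}(\lambda)$. Both statements use only that the eigenvalues are listed increasingly with multiplicity, so that $N(\mu)=\max\{j\mid\lambda_j\leq\mu\}$.

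It remains to replace $K_d$ by $1+C\varepsilon$. For $\varepsilon\in(0,1)$, convexity of $t\mapsto(1+t)^{1+d}$ on $[0,1]$ gives
$$
(1+\varepsilon)^{1+d}\leq1+(2^{1+d}-1)\varepsilon .
$$
So I take $C=2^{1+d}-1$, which depends only on $d$. Monotonicity of counting functions then yields $N_{g_2}(\lambda)\leq N_{g_1}((1+C\varepsilon)\lambda)$ and $N_{g_1}(\lambda/(1+C\varepsilon))\leq N_{g_2}(\lambda)$.

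The only step that is not mechanical is checking that Lemma \ref{lemm:quasi} genuinely covers the singular, possibly noncompact-resolvent-a-priori setting in which the statement will be used, namely Friedrichs extensions on collars. The point to verify is that the quadratic forms on the common core are comparable. The Friedrichs form domains are then identical, and min-max applies. Once discreteness of the spectrum is known for one metric, it transfers to the other. Everything else is elementary.
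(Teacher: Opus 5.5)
Your proposal is correct and follows the paper's proof. You apply Lemma \ref{lemm:quasi} with $K=1+\varepsilon$, absorb $(1+\varepsilon)^{\pm(1+d)}$ into $(1+C\varepsilon)^{\pm1}$, and pass to counting functions by monotonicity; your explicit constant $C=2^{1+d}-1$ (from convexity) and your remark that the Friedrichs form domains coincide just make explicit what the paper leaves as $1+\mathrm{O}(\varepsilon)$ and implicit in Lemma \ref{lemm:quasi}.
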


\begin{proof}
By Lemma \ref{lemm:quasi}, there exists $C>0$ (depending on $d$) such that
$$
\frac{1}{1+C\varepsilon}\lambda_j(g_1)\leq \lambda_j(g_2)\leq (1+C\varepsilon)\lambda_j(g_1)
\qquad \forall j\geq 1,
$$
because $(1+\varepsilon)^{1+d}=1+\mathrm{O}(\varepsilon)$ and $(1+\varepsilon)^{-1-d}=1+\mathrm{O}(\varepsilon)$ for $\varepsilon\in(0,1)$.
The inequalities for counting functions follow by monotonicity.
\end{proof}

It follows that if $N_{g_1}$ has a ``polynomial'' growth and we can choose $\varepsilon>0$ as small as we want, then the asymptotics are the same.

\subsection{Truncation for one-dimensional Schr\"odinger operators and constant-exponent cells}\label{app:truncation}

\begin{proposition}\label{prop:1d-truncation}
Let $V\in C^\infty(\R^+)$ satisfy $V\geq 0$ and $V(x)\to+\infty$ as $x\to+\infty$. Let $P$ be the nonnegative selfadjoint realization of $-\partial_x^2+V$ on $L^2(\R^+)$, with a fixed selfadjoint boundary condition at $x=0$, and denote by $(\lambda_j)_{j\geq 1}$ its eigenvalues. For $L>0$, let $P_L^{D/N}$ be the realization of the same differential operator on $(0,L)$, with the same boundary condition at $x=0$ and with Dirichlet or Neumann boundary conditions at $x=L$. Denote by $\lambda_j^{D/N}(L)$ the corresponding eigenvalues. Then, for every $j\geq 1$, $\lambda_j^{D/N}(L)\rightarrow \lambda_j$ as $L\to+\infty$. 

If, in addition, $V(x)\to+\infty$ as $x\to 0^+$ and $P$ is the Friedrichs realization of $-\partial_x^2+V$ on $L^2(\R^+)$, then the eigenvalues $\lambda_j^{D/N}(a,L)$ of $-\partial_x^2+V$ on $(a,L)$ with Dirichlet or Neumann boundary conditions at both endpoints satisfy $\lambda_j^{D/N}(a,L)\rightarrow\lambda_j$ as $a\to 0^+$ and $L\to+\infty$.
\end{proposition}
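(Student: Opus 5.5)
The proof runs through the min-max principle, with an upper bound and a lower bound for each eigenvalue. Both rest on the fact that the form domains of the truncated problems embed into that of $P$ (or contain restrictions of its compactly supported elements). We write $q(u)=\int(|u'|^2+V|u|^2)\,dx$ for the quadratic form. Since $V\to+\infty$ at infinity, $P$ has compact resolvent. Its eigenfunctions $\varphi_1,\dots,\varphi_j$ lie in the form domain and decay.

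\emph{Upper bound.} Fix $j$ and $\delta>0$. The form core contains functions compactly supported in $[0,\infty)$ that are compatible with the boundary condition at $0$. In the second statement, the Friedrichs core is $C_0^\infty(0,\infty)$, so the functions are supported away from $0$ as well. We approximate $\mathrm{span}(\varphi_1,\dots,\varphi_j)$ in form norm by a $j$-dimensional space $E$ of such functions, chosen so that $\max_{E} q(u)/\|u\|^2\le\lambda_j+\delta$. For $L$ large (and $a$ small), $E$ is supported in $(a,L)$. Its elements then lie in the Dirichlet form domain, which is contained in the Neumann one. Min-max gives
$$\lambda_j^{N}(a,L)\le\lambda_j^{D}(a,L)\le\lambda_j+\delta.$$
Dirichlet monotonicity also gives $\lambda_j\le\lambda_j^D(L)$ directly, since zero extension of a Dirichlet form-domain function lies in the form domain of $P$. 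So the Dirichlet case is a monotone limit.

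\emph{Lower bound for Neumann.} This is the main obstacle, because Neumann eigenvalues may lie below $\lambda_j$. We use an IMS localization. Take a partition $\chi_1^2+\chi_2^2=1$ with $\chi_1=1$ on $[0,L/2]$, $\chi_1$ supported in $[0,L)$, and $|\chi_i'|\le C/L$. For $u$ in the Neumann form domain on $(0,L)$,
$$q(u)=q(\chi_1u)+q(\chi_2u)-\int(|\chi_1'|^2+|\chi_2'|^2)|u|^2\,dx .$$
The function $\chi_1u$ extends by zero into the form domain of $P$. On the support of $\chi_2$ we have $V\ge\inf_{x\ge L/2}V=:m_L\to\infty$, so $q(\chi_2u)\ge m_L\|\chi_2u\|^2$. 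Hence the Neumann form satisfies
$$q\ \ge\ J_1^*(P-C/L^2)J_1+J_2^*(m_L-C/L^2)J_2,$$
where $J_1u=\chi_1 u$ and $J_2u=\chi_2u$. By min-max, the eigenvalues of the Neumann problem lying below $m_L-C/L^2$ are bounded below by the corresponding eigenvalues of $P-C/L^2$. Therefore $\liminf_{L\to\infty}\lambda_j^N(L)\ge\lambda_j$.

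\emph{Truncation at $0$.} In the second statement we handle the endpoint $0$ the same way. A second cutoff near $a$ uses $V(x)\to+\infty$ as $x\to0^+$: on $[a,2a]$ we have $V\ge\inf_{(0,2a)}V\to\infty$. The IMS error there is $C/a^2$, which is not small, so this term needs care. We therefore place the cutoff on a fixed small interval $[a,\rho]$ rather than $[a,2a]$, using a logarithmic cutoff whose derivative has $L^2$-mass of order $1/\ln(\rho/a)$. Alternatively, we choose $\rho$ small with $\inf_{(0,\rho)}V\ge K$ and use a cutoff varying on $[\rho/2,\rho]$, so that the error is $C/\rho^2$. We then take $K\gg C/\rho^2$, which is possible since $V\to\infty$ at $0$. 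The piece of $u$ supported near $a$ then contributes at least $K-C\rho^{-2}$, and the rest extends into the Friedrichs form domain of $P$ because it vanishes near $0$. Combining the cutoffs at both ends with the upper bound yields $\lambda_j^{D/N}(a,L)\to\lambda_j$.
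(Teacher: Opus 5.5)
\emph{Verdict: the proposal has a genuine gap in the Neumann condition at the left endpoint $x=a\to 0^+$; everything else is correct.}

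Your argument for the truncation at $x=L$ is correct, and it differs from the paper's. The paper gets the Neumann lower bound by compactness: the normalized Neumann eigenfunctions have small mass on $(R,L)$ and are bounded in $H^1(0,R)$. Rellich then extracts a limit eigenfunction, and the limit eigenvalue is identified with $\lambda_j$ by induction on $j$. Your IMS partition gives $\lambda_j^N(L)\geq\min(\lambda_j,m_L)-C/L^2$ directly; the identity $\sum_i|(\chi_iu)'|^2=|u'|^2+\sum_i|\chi_i'|^2|u|^2$ holds pointwise, so no boundary terms appear. This avoids the extraction and the induction, and it yields a rate. Your upper bound, and the Dirichlet case at both endpoints, are also fine.

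\textbf{The gap.} The problem is the Neumann condition at $x=a\to0^+$.
\begin{itemize}
\item \emph{Cutoff on $[\rho/2,\rho]$.} The IMS error $C\rho^{-2}$ also falls on $\chi_1u$, the piece you compare with $P$. So for fixed $\rho$ you only get $\liminf\lambda_j^N(a,L)\geq\lambda_j-C\rho^{-2}$. Letting $\rho\to0$ requires absorbing the error into $V$, which is your condition $K\gg C\rho^{-2}$. Since $\rho$ is chosen from $K$, this amounts to $x^2V(x)\to+\infty$, which does not follow from $V\to+\infty$.
\item \emph{Logarithmic cutoff.} This rests on a two-dimensional capacity heuristic. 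In one dimension the error $\int(\chi_0'^2+\chi_1'^2)|u|^2$ needs pointwise control of $|\chi'|^2$. Moreover, for $\chi=\ln(x/a)/\ln(\rho/a)$ one has $\int_a^\rho|\chi'|^2\,dx\approx(a\ln^2(\rho/a))^{-1}$, which is not small.
\end{itemize}

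In fact no argument can close this step under the stated hypotheses. Take $V=|\ln x|$ near $0$. Then $V$ is integrable there and the Friedrichs realization imposes $u(0)=0$. Let $\lambda_1^{N_0}$ be the ground state energy of the realization with a Neumann condition at $0$. Testing the Neumann form on $(a,L)$ with a cut-off version of that ground state gives $\limsup\lambda_1^N(a,L)\leq\lambda_1^{N_0}<\lambda_1$. The inequality is strict because the Dirichlet ground state cannot also satisfy $\varphi'(0)=0$.

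\textbf{How to repair it.} An extra growth hypothesis is needed, and the IMS error must be absorbed into the potential rather than subtracted from the spectrum. Take $\chi_0=\sin\theta$ and $\chi_1=\cos\theta$, with $\theta$ going from $\pi/2$ at $a$ to $0$ at $\rho$ and $\theta'^2\leq\vartheta V$. Then
\begin{itemize}
\item $q(\chi_1u)-\int\theta'^2|\chi_1u|^2\geq(1-\vartheta)\,q(\chi_1u)$, and
\item $q(\chi_0u)-\int\theta'^2|\chi_0u|^2\geq(1-\vartheta)\inf_{(0,\rho)}V\,\|\chi_0u\|^2$,
\end{itemize}
which gives convergence provided $\vartheta\to0$ as $a\to0^+$. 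Such a $\theta$ can be chosen with $\vartheta\to0$ exactly when $\int_0\sqrt{V}\,dx=+\infty$.

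The paper's application is $V(s)=\frac{\beta n(\beta n+4)}{16s^2}+s^\beta$, which satisfies $V\geq cx^{-2}$. In that case the choice $\theta(x)=\frac{\pi}{2}\frac{\ln(\rho/x)}{\ln(\rho/a)}$ gives $\vartheta=\frac{\pi^2}{4c\ln^2(\rho/a)}$. This is the correct form of your logarithmic cutoff.

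The paper's ``symmetric'' compactness argument has the same hidden requirement. The Rellich limit must lie in the Friedrichs form domain, which holds when $\int_0V\,dx=+\infty$.
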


\begin{proof}
The Dirichlet convergence follows from the min-max principle. Indeed, the form domains on $(0,L)$, with the fixed boundary condition at $x=0$ and Dirichlet condition at $x=L$, form an increasing family as $L\to+\infty$ and their union is a form core for $P$. Hence $\lambda_j^D(L)$ is nonincreasing in $L$, bounded below by $\lambda_j$, and converges to $\lambda_j$.
Consider now the Neumann case. Let $\varphi_{j,L}$ be an $L^2$-normalized eigenfunction of $P_L^N$ associated with $\lambda_j^N(L)$. By min-max, $\lambda_j^N(L)\leq\lambda_j^D(L)$, so $(\lambda_j^N(L))_{L\geq L_0}$ is uniformly bounded. We extend $\varphi_{j,L}$ by $0$ outside $(0,L)$ only when discussing $L^2$ convergence.
Fix $R>0$. Because $V\to+\infty$ at $+\infty$, there exists $M_R>0$ such that $V\geq M_R$ on $(R,+\infty)$. Therefore, for $L>R$,
$$
M_R\int_R^{L}\vert\varphi_{j,L}(x)\vert^2\,dx \leq \int_0^{L}V(x)\vert\varphi_{j,L}(x)\vert^2\,dx \leq \lambda_j^N(L),
$$
and thus the $L^2$ mass of $\varphi_{j,L}$ on $(R,L)$ can be made arbitrarily small, uniformly for large $L$, by taking $R$ large enough. On every fixed interval $(0,R)$ the family $(\varphi_{j,L})$ is uniformly bounded in $H^1$ for $L>R$, since
$$
\int_0^R\vert\varphi_{j,L}'(x)\vert^2\,dx\leq \int_0^{L} \left(\vert\varphi_{j,L}'(x)\vert^2+V(x)\vert\varphi_{j,L}(x)\vert^2\right) dx=\lambda_j^N(L).
$$
By Rellich's theorem, any sequence $L_p\to+\infty$ admits a subsequence such that $\varphi_{j,L_p}$ converges strongly in $L^2(\R^+)$ to some nonzero limit $\varphi_j$. Passing to the limit in the weak formulation shows that $P\varphi_j=\mu_j\varphi_j$, where $\mu_j$ is the limit of the corresponding eigenvalues. Hence $\mu_j$ is an eigenvalue of $P$. The identification $\mu_j=\lambda_j$ follows by induction on $j$, using orthogonality of the eigenfunctions and the min-max principle. 

When $V$ also tends to $+\infty$ at $x=0$, the same compactness argument applies symmetrically at the left endpoint: extending $\varphi_{j,a,L}$ by zero on $(0,a)\cup(L,+\infty)$, the bound $V\geq M$ on $(0,a_0)\cup(R,+\infty)$ for $a_0$ small and $R$ large yields uniform smallness of the $L^2$ mass outside $[a_0,R]$, and the same Rellich extraction concludes.
\end{proof}

\begin{remark}
The same compactness argument applies to Schr\"odinger operators $-\triangle+V$ on exhausting domains in any dimension, under the same confining assumption $V\to+\infty$ near the boundary of the ambient domain. We do not need this generalization here.
\end{remark}

The convergence in Proposition \ref{prop:1d-truncation} holds for each fixed $k$, but it does not by itself control the spectral sum over $k$. We need the following uniform Weyl upper bound, valid on \emph{every} interval and for both boundary conditions.

\begin{lemma}\label{lem:uniform-1d-Neumann}
Fix $\bar\beta>\beta_c$ and $B\geq\bar\beta$. There exist constants $C,E_0>0$, depending only on $n,\bar\beta,B$, such that for all $\beta\in[\bar\beta,B]$, every interval $(a,L)$ with $0\leq a<L\leq+\infty$ (with the Friedrichs realization at $a=0$ when $a=0$), and every $E\geq E_0$, the Neumann realization of
$P_1(\beta)=-\partial_s^2+\frac{\beta n(\beta n+4)}{16s^2}+s^\beta$
on $(a,L)$ satisfies
$$
N_{P_1(\beta),(a,L)}^N(E)\leq C\,E^{\frac{\beta+2}{2\beta}}.
$$
Equivalently, its eigenvalues satisfy $\nu_k^N(\beta,a,L)\geq C^{-1}k^{\frac{2\beta}{\beta+2}}$ for every $k\geq 1$. The same bounds hold a fortiori for the Dirichlet realization.
\end{lemma}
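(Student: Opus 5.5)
The plan is to compare with a free Laplacian through a crude Neumann bracketing, using only the sign of the inverse-square term and the growth of $s^\beta$. Write $V_\beta(s)=\frac{\beta n(\beta n+4)}{16s^2}+s^\beta$. Since the inverse-square coefficient is nonnegative, dropping it lowers the quadratic form. By min-max this can only increase the counting function. Likewise, inserting an artificial Neumann point (decoupling) enlarges the form domain and also increases the counting function. At $a=0$ the Friedrichs form domain is contained in $H^1$ of any subinterval $(0,\ell)$, so the same comparison with Neumann conditions applies there. All constants below will depend only on $n,\bar\beta,B$.

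\emph{Step 1 (splitting at the turning point).} Fix $E\geq E_0$ and set $s_E=(2E)^{1/\beta}$. If $s_E<L$, I insert a Neumann point at $s_E$ (or at $a$ if $a>s_E$). The counting function on $(a,L)$ is then bounded by the sum of the counting functions on $(a,\min(L,s_E))$ and on $(\max(a,s_E),L)$.
\begin{itemize}
\item On the outer piece $V_\beta\geq s^\beta\geq 2E$, so the Neumann quadratic form is bounded below by $2E\Vert\varphi\Vert^2$. Hence there are no eigenvalues $\leq E$; this also holds for $L=+\infty$.
\item On the inner piece, which has length $\ell\leq s_E$, I discard $V_\beta\geq 0$ entirely. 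The Neumann Laplacian on an interval of length $\ell$ has eigenvalues $(\pi k/\ell)^2$, $k\geq 0$, so its count below $E$ is at most $1+\ell\sqrt{E}/\pi$.
\end{itemize}
Adding the two pieces gives $N^N(E)\leq 1+\pi^{-1}(2E)^{1/\beta}E^{1/2}$. Since $\frac1\beta+\frac12=\frac{\beta+2}{2\beta}$, and $2^{1/\beta}\leq 2^{1/\bar\beta}$ for $\beta\geq\bar\beta$, this is at most $C\,E^{\frac{\beta+2}{2\beta}}$ for $E\geq E_0$, uniformly in $\beta\in[\bar\beta,B]$. The case $E_0\geq1$ absorbs the additive constant.

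\emph{Step 2 (eigenvalue form).} I expect the main subtlety to be making the eigenvalue bound hold for every $k\geq1$ rather than only for large $k$. The point is that $V_\beta$ is uniformly bounded below by some $c_0>0$. Indeed, for $s\leq1$ the inverse-square term is at least $\frac{\bar\beta n(\bar\beta n+4)}{16}>0$, and for $s\geq1$ the term $s^\beta$ is at least $1$. Hence $\nu_1^N\geq c_0$ for every interval, every $\beta$ and every boundary condition.

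For $k$ large, take $E=\nu_k^N$. If $\nu_k^N\geq E_0$, Step 1 gives $k\leq N^N(\nu_k^N)\leq C(\nu_k^N)^{\frac{\beta+2}{2\beta}}$, that is, $\nu_k^N\geq (k/C)^{\frac{2\beta}{\beta+2}}$. If instead $\nu_k^N<E_0$, then $k\leq N^N(E_0)\leq C E_0^{\frac{\beta+2}{2\beta}}$, which bounds $k$ by a uniform constant $k_0$. For these finitely many $k\leq k_0$, the bound $\nu_k^N\geq c_0$ yields $\nu_k^N\geq C'^{-1}k^{\frac{2\beta}{\beta+2}}$ after enlarging the constant. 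The exponent $\frac{2\beta}{\beta+2}$ ranges in a compact set, so all constants are uniform in $\beta\in[\bar\beta,B]$.

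\emph{Step 3 (Dirichlet).} The Dirichlet form domain is contained in the Neumann one. By min-max, $\nu_k^D\geq\nu_k^N$ and $N^D\leq N^N$, so both bounds transfer to the Dirichlet realization.
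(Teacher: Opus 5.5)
Your proof is correct. It differs from the paper's in the decomposition. The paper also drops the inverse-square term, but then brackets $(a,L)$ into unit cells $J_p=(a,L)\cap(p,p+1)$ and uses $s^\beta\geq p^\beta$ together with $N^N_{J_p}(\mu)\leq 1+\pi^{-1}\sqrt{\mu_+}$ on each cell. It then compares the resulting sum over $p\leq E^{1/\beta}$ with $\int_0^{E^{1/\beta}}\sqrt{E-t^\beta}\,dt=E^{\frac1\beta+\frac12}\int_0^1\sqrt{1-u^\beta}\,du$.

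You instead make a single Neumann cut at the turning point $s_E=(2E)^{1/\beta}$. The outer piece is eliminated outright because the form is at least $2E\Vert\varphi\Vert^2$ there. On the inner piece you discard the potential altogether and count free Neumann modes on an interval of length at most $s_E$.

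Your route is shorter, avoids the sum-to-integral comparison, and gives the explicit constant $1+\pi^{-1}2^{1/\bar\beta}$ for $E\geq 1$. The paper's cellwise bound keeps the shape of $s^\beta$, so up to lower-order terms it reproduces the semiclassical phase-space quantity $\frac1\pi\int\sqrt{(E-s^\beta)_+}\,ds$, i.e.\ the sharp Weyl constant. Yours loses a constant factor. That refinement is irrelevant for the use of the lemma in the tail estimates of Lemma \ref{lem_constant-cell}, where only the order $E^{\frac{\beta+2}{2\beta}}$ and uniformity in $(a,L)$ and $\beta$ matter.

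Your Step 2 is also slightly more complete than the paper's inversion. The paper only says that $C$ may be increased to handle finitely many small $k$, which tacitly needs a uniform positive lower bound on $\nu_1^N$. Your observation that $V_\beta\geq\min\big(1,\frac{\bar\beta n(\bar\beta n+4)}{16}\big)>0$ supplies exactly this; it is the same quantity the paper later calls $m_0$.
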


\begin{proof}
It is enough to prove the Neumann estimate, since the Dirichlet counting function is smaller by the min-max principle. We give a direct proof on an arbitrary interval $(a,L)$. Drop the nonnegative inverse-square term; by the min-max principle,
$$
N_{P_1(\beta),(a,L)}^N(E)\leq N_{-\partial_s^2+s^\beta,(a,L)}^N(E).
$$
Partition $(a,L)$ by the unit intervals $I_p=(p,p+1)$, $p\geq 0$, and set $J_p=(a,L)\cap I_p$. Imposing Neumann conditions at the artificial interfaces, Neumann bracketing gives
$$
N_{-\partial_s^2+s^\beta,(a,L)}^N(E)\leq\sum_{p\geq 0}N_{J_p}^N\big(E-p^\beta\big),
$$
since $s^\beta\geq p^\beta$ on $J_p$. Each nonempty $J_p$ has length at most $1$, so the Neumann counting function of $-\partial_s^2$ on $J_p$ satisfies
$N_{J_p}^N(\mu)\leq 1+\frac{1}{\pi}\sqrt{\mu_+}$
for $\mu\geq 0$, while it equals $0$ for $\mu<0$. Hence only the indices $p\leq E^{1/\beta}$ contribute, and
$$
N_{P_1(\beta),(a,L)}^N(E)\leq\sum_{0\leq p\leq E^{1/\beta}}\Big(1+\frac{1}{\pi}\sqrt{E-p^\beta}\Big)\leq C_1 E^{1/\beta}+C_2\int_0^{E^{1/\beta}}\sqrt{E-t^\beta}\,dt.
$$
We have
$\int_0^{E^{1/\beta}}\sqrt{E-t^\beta}\,dt = E^{\frac{1}{\beta}+\frac12}\int_0^1\sqrt{1-u^\beta}\,du$,
and $\int_0^1\sqrt{1-u^\beta}\,du$ is bounded uniformly for $\beta\in[\bar\beta,B]$. Since $\frac1\beta+\frac12=\frac{\beta+2}{2\beta}\geq\frac1\beta$, we obtain
$N_{P_1(\beta),(a,L)}^N(E)\leq C\,E^{\frac{\beta+2}{2\beta}}$
for $E\geq E_0$, with $C,E_0$ depending only on $n,\bar\beta,B$ and uniform in the interval $(a,L)$. The equivalent eigenvalue bound follows by inversion, after increasing $C$ if necessary to handle the finitely many small values of $k$, and the Dirichlet case is smaller by min-max.
\end{proof}

We now give the proof of Lemma \ref{lem_constant-cell}.

\begin{proof}[Proof of Lemma \ref{lem_constant-cell}.]
Let $\widehat\Omega_T=(0,+\infty)\times T$ with metric $dx^2+x^{-\beta}\vert dy\vert^2$, and impose the same Dirichlet or Neumann boundary conditions on $(0,+\infty)\times\partial T$. Let $(\mu_j^{D/N}(T))_{j\geq 1}$ be the corresponding eigenvalues of $-\triangle_T$ on $T$. 
In the Neumann case, the tangential zero mode has multiplicity one, since $T$ is connected. On the finite and truncated collars $\Omega_T$ and $\Omega_{T,\lambda,c}$, this zero mode gives only a one-dimensional radial contribution, hence an $\mathrm{O}(\lambda^{1/2})$ contribution, uniformly in the mesoscopic simplexes considered here. Since $\frac{1}{2}<\frac{n}{2}+\frac{n}{4}\beta$ for $n\geq 1$ and $\beta>0$, this contribution is negligible. On the infinite cone $\widehat\Omega_T$, the zero tangential mode does not belong to the discrete supercritical model, since the corresponding radial operator $-\partial_x^2+\frac{\beta n(\beta n+4)}{16x^2}$ has no confining term $\mu x^\beta$. We therefore restrict the separation-of-variables argument to the positive tangential eigenvalues, and from now on write $\mu_j(T)>0$. Let $N_{T,+}^{D/N}(\Lambda)$ denote the counting function of the positive tangential eigenvalues on $T$.
By separation of variables, the positive-tangential part of the Friedrichs Laplacian on $\widehat\Omega_T$ is unitarily equivalent to the direct sum over the positive tangential eigenvalues of the one-dimensional operators
$P_{\mu_j(T)}=-\partial_x^2+\frac{\beta n(\beta n+4)}{16x^2}+\mu_j(T)x^\beta$
on $L^2(\R^+)$. After the scaling $x=\mu_j(T)^{-1/(\beta+2)}s$, this becomes
$\mu_j(T)^{\frac{2}{\beta+2}}\big(-\partial_s^2+\frac{\beta n(\beta n+4)}{16s^2}+s^\beta\big)$.
If $(\nu_k(\beta))_{k\geq 1}$ denotes the spectrum of $P_1=-\partial_s^2+\frac{\beta n(\beta n+4)}{16s^2}+s^\beta$ on $L^2(\R^+)$, then
$$
N_{\widehat\Omega_T,+}^{D/N}(\lambda) = \sum_{k\geq 1} N_{T,+}^{D/N} \bigg(\Big(\frac{\lambda}{\nu_k(\beta)}\Big)^{\frac{\beta+2}{2}}\bigg).
$$
Since only finitely many shapes occur, the Weyl law on $T$ is uniform with respect to $T$, and by scaling the dependence on the size of $T$ is exactly through $\vert T\vert$. Since $\beta$ ranges in the compact interval $[\bar\beta,\bm]\subset(\beta_c,+\infty)$, the series defining $A(\beta,n)$ converges uniformly. Therefore the same separation-of-variables argument as in \cite{CDHDT24} yields
$$
N_{\widehat\Omega_T,+}^{D/N}(\lambda) = A(\beta,n)\,\vert T\vert\,\lambda^{\frac{n}{2}+\frac{n}{4}\beta} \,(1+\mathrm{o}(1))
$$
as $\lambda\to+\infty$, uniformly with respect to $\beta\in[\bar\beta,\bm]$ and $T$.

We next pass from the infinite cone $\widehat\Omega_T$ to the finite collar $\Omega_T=(0,b)\times T$. The separation of variables performed above applies verbatim on $\Omega_T$ with Dirichlet or Neumann boundary conditions on $\{x=b\}\cup((0,b)\times\partial T)$, the only difference being that each radial operator $P_{\mu_j(T)}$ is now realized on the bounded interval $(0,b)$ with the same selfadjoint condition at $x=0$ and Dirichlet or Neumann condition at $x=b$. The scaling $x=\mu_j(T)^{-1/(\beta+2)}s$ maps each such radial operator to $\mu_j(T)^{2/(\beta+2)}P_1$ acting on $L^2(0,L_j)$, with $L_j=b\mu_j(T)^{1/(\beta+2)}\to+\infty$ as $j\to+\infty$. Denoting by $\nu_k^{D/N}(\beta,L)$ the $k$-th eigenvalue of $P_1$ on $L^2(0,L)$ (with the same selfadjoint condition at $s=0$ and Dirichlet or Neumann condition at $s=L$), the eigenvalues of the Friedrichs Laplacian on $\Omega_T$ are therefore
$$
\mu_j(T)^{\frac{2}{\beta+2}}\,\nu_k^{D/N}(\beta,L_j),\qquad j,k\geq 1.
$$
Proposition \ref{prop:1d-truncation} applied to $P_1$ gives, for each $k\geq 1$ and each $\beta\in[\bar\beta,\bm]$, that $\nu_k^{D/N}(\beta,L)\rightarrow\nu_k(\beta)$ as $L\to+\infty$. 
The compactness proof of Proposition \ref{prop:1d-truncation} is uniform for $\beta\in[\bar\beta,\bm]$, because the potentials $\frac{\beta n(\beta n+4)}{16s^2}+s^\beta$ are uniformly confining at $0$ and at $+\infty$ on this compact interval of exponents. Hence the convergence is uniform with respect to $\beta\in[\bar\beta,\bm]$.
The same separation of variables on the finite collar $\Omega_T$ gives
$$
N_{\Omega_T}^{D/N}(\lambda)
=
\sum_{j,k}\mathbf 1_{\big\{\mu_j(T)^{\frac{2}{\beta+2}}\nu_k^{D/N}(\beta,L_j)\leq\lambda\big\}}
+R_{0,T}^{D/N}(\lambda),
\qquad L_j=b\,\mu_j(T)^{\frac{1}{\beta+2}},
$$
where the sum is over the positive tangential eigenvalues $\mu_j(T)>0$ and $k\geq 1$. Here $R_{0,T}^D(\lambda)=0$, while in the Neumann case $R_{0,T}^N(\lambda)=\bigO(\lambda^{1/2})$, uniformly in the mesoscopic simplexes considered here, since it is the contribution of the tangential zero mode discussed above; it is negligible compared with $\lambda^{\frac{n}{2}+\frac{n}{4}\beta}$. Note that $L_j$ depends on the tangential index $j$, so the sum cannot be regrouped as a tangential counting function with a single radial threshold.

Since $T$ is mesoscopic with diameter comparable to $\eta=\lambda^{\gamma-\frac12}$, the first positive tangential eigenvalue satisfies $\mu_1(T)\geq C\eta^{-2}$, whence
$L_j\geq Cb\,\eta^{-\frac{2}{\beta+2}}\to+\infty$
uniformly in $j$, $T$ and $\beta\in[\bar\beta,\bm]$. We work with the normalized quantity $N_{\Omega_T}^{D/N}(\lambda)/\lambda^{\frac{n}{2}+\frac{n}{4}\beta}$ and fix a cutoff $K\geq 1$.

For the head $k\leq K$, Proposition \ref{prop:1d-truncation} gives $\nu_k^{D/N}(\beta,L_j)=\nu_k(\beta)(1+\mathrm{o}(1))$ as $\lambda\to+\infty$, uniformly in $j$, $T$ and $\beta\in[\bar\beta,\bm]$ (the convergence is uniform because the potentials are uniformly confining on the compact exponent interval). Using the uniform Weyl law for the positive-tangential counting function on the finite family of simplex shapes and $\frac{n(\beta+2)}{4}=\frac{n}{2}+\frac{n}{4}\beta=\frac{d_H}{2}$, the contribution of these finitely many indices, normalized by $\lambda^{\frac{n}{2}+\frac{n}{4}\beta}$, converges as $\lambda\to+\infty$ to $\gamma_n\vert T\vert\sum_{k=1}^K\nu_k(\beta)^{-d_H/2}$.

For the tail $k>K$, Lemma \ref{lem:uniform-1d-Neumann} gives $\nu_k^{D/N}(\beta,L_j)\geq C^{-1}k^{2\beta/(\beta+2)}$, uniformly in $j$, $T$ and $\beta\in[\bar\beta,\bm]$. Hence, if a pair $(j,k)$ contributes, then $\mu_j(T)^{2/(\beta+2)}\leq\lambda\,C\,k^{-2\beta/(\beta+2)}$, that is,
$$
\mu_j(T)\leq C^{\frac{\beta+2}{2}}\lambda^{\frac{\beta+2}{2}}k^{-\beta}.
$$
By the uniform Weyl upper bound for the positive-tangential counting function, $\#\{j:\mu_j(T)\leq\Lambda\}\leq C'\vert T\vert\,\Lambda^{n/2}$, so
$$
\sum_{k>K}\#\big\{j:\mu_j(T)^{\frac{2}{\beta+2}}\nu_k^{D/N}(\beta,L_j)\leq\lambda\big\}
\leq C''\vert T\vert\,\lambda^{\frac{n}{2}+\frac{n}{4}\beta}\sum_{k>K}k^{-\frac{n\beta}{2}}.
$$
After normalization by $\lambda^{\frac{n}{2}+\frac{n}{4}\beta}$, the factor $\lambda^{\frac{n}{2}+\frac{n}{4}\beta}$ cancels, leaving the bound $C''\vert T\vert\sum_{k>K}k^{-n\beta/2}$, which is independent of $\lambda$. The series $\sum_k k^{-n\beta/2}$ converges because $\beta>\beta_c=\frac{2}{n}$; this is the same condition that makes $\zeta_{\beta,n}(d_H/2)$, and hence $A(\beta,n)$ in Theorem \ref{theo:main0}, finite. Therefore the normalized tail tends to $0$ as $K\to+\infty$, uniformly in $\lambda$ and uniformly for $\beta\in[\bar\beta,\bm]$ (since $n\bar\beta/2>1$).

Letting first $\lambda\to+\infty$ and then $K\to+\infty$ gives
$$
N_{\Omega_T}^{D/N}(\lambda)=A(\beta,n)\,\vert T\vert\,\lambda^{\frac{n}{2}+\frac{n}{4}\beta}\,(1+\mathrm{o}(1)),
$$
that is, $N_{\Omega_T}^{D/N}(\lambda)$ has the same leading term as $N_{\widehat\Omega_T,+}^{D/N}(\lambda)$.

Finally, let $\Omega_{T,\lambda,c}=[c\lambda^{-1/2},b]\times T$. We redo the scaling. The same separation of variables applies on $\Omega_{T,\lambda,c}$, and each radial operator $P_{\mu_j(T)}$ is now realized on the interval $[c\lambda^{-1/2},b]$ with Dirichlet or Neumann boundary conditions at both endpoints. The scaling $x=\mu_j(T)^{-1/(\beta+2)}s$ maps this to $\mu_j(T)^{2/(\beta+2)}P_1(\beta)$ acting on $L^2(a_{j,\lambda,c},L_j)$, with
$$
a_{j,\lambda,c}=c\lambda^{-1/2}\mu_j(T)^{1/(\beta+2)},\qquad L_j=b\mu_j(T)^{1/(\beta+2)}.
$$
Denoting by $\nu_k^{D/N}(\beta,a,L)$ the $k$-th eigenvalue of $P_1(\beta)$ on $L^2(a,L)$ with Dirichlet or Neumann boundary conditions at both endpoints, the eigenvalues of the Friedrichs Laplacian on $\Omega_{T,\lambda,c}$ are
$$
\mu_j(T)^{\frac{2}{\beta+2}}\,\nu_k^{D/N}(\beta,a_{j,\lambda,c},L_j),\qquad j,k\geq 1.
$$
The model potential $V(s)=\frac{\beta n(\beta n+4)}{16s^2}+s^\beta$ tends to $+\infty$ at both endpoints $s=0$ and $s=+\infty$. Moreover, since $\beta>\beta_c=\frac{2}{n}$, we have $\frac{\beta n(\beta n+4)}{16}>\frac{3}{4}$, so $P_1(\beta)$ is in the limit-point case at $s=0$ and coincides with its Friedrichs realization on $L^2(0,+\infty)$.

We write the counting function as
$$
N_{\Omega_{T,\lambda,c}}^{D/N}(\lambda)
=
\sum_{j,k}\mathbf 1_{\big\{\mu_j(T)^{\frac{2}{\beta+2}}\nu_k^{D/N}(\beta,a_{j,\lambda,c},L_j)\leq\lambda\big\}}
+R_{0,T,\lambda,c}^{D/N}(\lambda),
$$
where the sum is over the positive tangential eigenvalues $\mu_j(T)>0$ and $k\geq 1$. Again $R_{0,T,\lambda,c}^D(\lambda)=0$, while $R_{0,T,\lambda,c}^N(\lambda)=\bigO(\lambda^{1/2})$ uniformly, hence negligible. We fix a cutoff $K\geq 1$.

Let
$$
m_0=\inf_{\beta\in[\bar\beta,\bm]}\inf_{s>0}\left(\frac{\beta n(\beta n+4)}{16s^2}+s^\beta\right)>0.
$$
Every eigenvalue $\nu_k^{D/N}(\beta,a,L)$ is bounded from below by $m_0$, uniformly in $a,L,k$ and $\beta\in[\bar\beta,\bm]$. Hence any contributing pair $(j,k)$ satisfies $\mu_j(T)^{\frac{2}{\beta+2}}m_0\leq\lambda$, and therefore
$$
a_{j,\lambda,c}=c\lambda^{-1/2}\mu_j(T)^{\frac{1}{\beta+2}}\leq c\,m_0^{-1/2}.
$$
Moreover, since $T$ is mesoscopic, $L_j=b\mu_j(T)^{\frac{1}{\beta+2}}\geq Cb\,\eta^{-\frac{2}{\beta+2}}\to+\infty$ uniformly in the positive tangential modes.

For the head $k\leq K$: after choosing $c>0$ sufficiently small (so that all the left endpoints $a_{j,\lambda,c}$ are uniformly small) and then taking $\lambda$ large, Proposition \ref{prop:1d-truncation} gives, for all $k\leq K$,
$$
\nu_k^{D/N}(\beta,a_{j,\lambda,c},L_j)=\nu_k(\beta)(1+\mathrm{O}(\varepsilon)),
$$
uniformly for all contributing pairs $(j,k)$, for $\beta\in[\bar\beta,\bm]$ and $T\in\mathscr S$. By the uniform tangential Weyl law, the contribution of these finitely many indices, normalized by $\lambda^{\frac{n}{2}+\frac{n}{4}\beta}$, equals $\gamma_n\vert T\vert\sum_{k=1}^K\nu_k(\beta)^{-d_H/2}(1+\mathrm{O}(\varepsilon)+\mathrm{o}(1))$.

For the tail $k>K$: Lemma \ref{lem:uniform-1d-Neumann} applies on the truncated intervals $(a_{j,\lambda,c},L_j)$ as well, giving $\nu_k^{D/N}(\beta,a_{j,\lambda,c},L_j)\geq C^{-1}k^{2\beta/(\beta+2)}$ uniformly. Exactly as in the finite-collar case, if a pair $(j,k)$ contributes then $\mu_j(T)\leq C^{\frac{\beta+2}{2}}\lambda^{\frac{\beta+2}{2}}k^{-\beta}$, and the uniform tangential Weyl upper bound gives a normalized tail bounded above, uniformly in $\lambda$, by $C''\vert T\vert\sum_{k>K}k^{-n\beta/2}$, which tends to $0$ as $K\to+\infty$, uniformly for $\beta\in[\bar\beta,\bm]$, because $n\bar\beta/2>1$.

Given $\varepsilon>0$, choose first $K$ large enough so that the normalized tail is bounded by $\varepsilon$, uniformly for $\beta\in[\bar\beta,\bm]$ and for all mesoscopic simplexes under consideration. This is possible because $n\bar\beta/2>1$ and the volumes of these simplexes are uniformly bounded. Then choose $c_\varepsilon\in(0,\pi/2)$ small enough so that Proposition \ref{prop:1d-truncation} gives the relative error $\mathrm{O}(\varepsilon)$ for all $k\leq K$ and all $c\in(0,c_\varepsilon)$. Letting $\lambda\to+\infty$, we obtain, for every $c\in(0,c_\varepsilon)$,
$$
N_{\Omega_{T,\lambda,c}}^{D/N}(\lambda) = A(\beta,n)\,\vert T\vert\,\lambda^{\frac{n}{2}+\frac{n}{4}\beta} \, (1+\mathrm{O}(\varepsilon)+\mathrm{o}(1))
$$
as $\lambda\to+\infty$, uniformly with respect to $\beta\in[\bar\beta,\bm]$ and $T\in\mathscr S$. This proves Lemma \ref{lem_constant-cell}.
\end{proof}

\subsection{Morse-Bott critical maxima and Gaussian integrals}\label{app:gauss} 

\begin{proposition}\label{prop:gauss}
Let $(M,g)$ be a compact Riemannian manifold of dimension $n$, let $f,a\in C^\infty(M)$ and assume that $f$ achieves its maximum $f_{\mathrm{max}}$ on a submanifold $W=f^{-1}(f_{\mathrm{max}})$ of codimension $d$. Assume that $f$ is Morse-Bott along $W$, that is, $-\mathrm{Hess}(f)$ is positive definite on the normal bundle $N_W$. Then, as $\tau\to+\infty$,
\begin{multline*}
\int_M e^{\tau f(m)}a(m)\,d\vol_g(m) \\
= e^{\tau f_{\mathrm{max}}}\Big(\frac{2\pi}{\tau}\Big)^{\frac{d}{2}} \int_W \frac{a(w)}{\sqrt{\det(-\mathrm{Hess}(f)(w)\vert_{N_wW})}}\,d\vol_{g\vert_W}(w) + \mathrm{o}\big(e^{\tau f_{\mathrm{max}}}\tau^{-\frac{d}{2}}\big).
\end{multline*}
\end{proposition}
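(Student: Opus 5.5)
We use the standard Laplace method, localized in a tubular neighborhood of $W$ and reduced to Gaussian integrals in the normal fibers. Since $M$ is compact and $W=f^{-1}(f_{\mathrm{max}})$ is the maximum set, for every $\delta>0$ there is $\kappa(\delta)>0$ such that $f\leq f_{\mathrm{max}}-\kappa(\delta)$ outside the tubular neighborhood $U_\delta=\{\dist_g(\cdot,W)<\delta\}$. The contribution of $M\setminus U_\delta$ is therefore bounded by $\Vert a\Vert_\infty\vol_g(M)\,e^{\tau(f_{\mathrm{max}}-\kappa(\delta))}$. This is $\mathrm{o}(e^{\tau f_{\mathrm{max}}}\tau^{-d/2})$ for each fixed $\delta$, so it suffices to treat $U_\delta$.

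On $U_\delta$, for $\delta$ small, we use the normal exponential map $(w,z)\mapsto \exp_w(z)$, with $w\in W$ and $z\in N_wW$, $\vert z\vert<\delta$. We identify $N_wW$ with $\R^d$ through a $g$-orthonormal frame; to avoid the global frame issue, we do this locally on $W$ with a partition of unity. In these coordinates,
$$d\vol_g=J(w,z)\,d\vol_{g\vert_W}(w)\,dz,$$
where $J$ is smooth and $J(w,0)=1$. Since $f$ is constant on $W$ and $W$ consists of critical points, Taylor's formula in $z$ (uniform in $w$ by compactness) gives
$$f(w,z)=f_{\mathrm{max}}-\tfrac12\langle Q(w)z,z\rangle+\mathrm{O}(\vert z\vert^3),\qquad Q(w)=-\mathrm{Hess}(f)(w)\vert_{N_wW}.$$
Here $Q$ is continuous and uniformly positive definite, say $Q\geq 2q_0>0$. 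Shrinking $\delta$, we get $f(w,z)\leq f_{\mathrm{max}}-\tfrac{q_0}{2}\vert z\vert^2$ on $U_\delta$. This gives an integrable Gaussian domination uniform in $w$.

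Next we rescale $z=\tau^{-1/2}\zeta$. The integral over $U_\delta$ becomes
$$e^{\tau f_{\mathrm{max}}}\tau^{-d/2}\int_W\int_{\vert\zeta\vert<\delta\sqrt\tau} e^{\tau(f(w,\tau^{-1/2}\zeta)-f_{\mathrm{max}})}\,a(w,\tau^{-1/2}\zeta)\,J(w,\tau^{-1/2}\zeta)\,d\zeta\,d\vol_{g\vert_W}(w).$$
As $\tau\to+\infty$, the integrand converges pointwise to $e^{-\frac12\langle Q(w)\zeta,\zeta\rangle}a(w)$, because $\tau\,\mathrm{O}(\vert\tau^{-1/2}\zeta\vert^3)=\mathrm{O}(\tau^{-1/2}\vert\zeta\vert^3)\to0$. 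It is dominated by $\Vert aJ\Vert_\infty e^{-\frac{q_0}{2}\vert\zeta\vert^2}$. Dominated convergence on $W\times\R^d$, together with
$$\int_{\R^d}e^{-\frac12\langle Q\zeta,\zeta\rangle}\,d\zeta=(2\pi)^{d/2}(\det Q)^{-1/2},$$
yields the stated main term with error $\mathrm{o}(e^{\tau f_{\mathrm{max}}}\tau^{-d/2})$.

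The main point to check carefully is the uniform quadratic lower bound $f_{\mathrm{max}}-f\geq \tfrac{q_0}{2}\vert z\vert^2$ on the tube. This is exactly where the Morse–Bott hypothesis, compactness of $W$, and the $C^2$ regularity of $f$ enter, and it is what makes dominated convergence legitimate.

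The argument only uses $f\in C^2$ and continuity of $a$, which matters for the application with $\beta\in C^2$. In that case $\det Q$ is computed with $\mathcal{O}(\vert z\vert^3)$ replaced by $\mathrm{o}(\vert z\vert^2)$ uniformly in $w$, and the same domination argument goes through.
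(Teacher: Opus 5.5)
Your proposal is correct and follows the same route as the paper: localize to a tubular neighborhood of $W$, Taylor-expand $f$ to second order in the normal variables, and evaluate the resulting Gaussian integral in the fibers. You also spell out the steps the paper leaves to standard Laplace-method references: the exponentially small contribution away from $W$, the uniform quadratic bound $f_{\mathrm{max}}-f\geq \frac{q_0}{2}\vert z\vert^2$, and dominated convergence after the rescaling $z=\tau^{-1/2}\zeta$. Your remark that only $f\in C^2$ and continuity of $a$ are needed is exactly what the application to $\beta\in C^2$ in Corollary~\ref{co:codimension} requires.
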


\begin{proof}
By the Morse-Bott lemma, for each $w\in W$ there exist local coordinates $(y,z)\in\R^{n-d}\times\R^d$ such that $W=\{z=0\}$ and
$$
f(y,z)=f_{\mathrm{max}}-\frac{1}{2}\langle H(y)z,z\rangle+\mathrm{O}(\Vert z\Vert^3),
\qquad
H(y)>0.
$$
Integrating first in $z$ gives a Gaussian integral with parameter $\tau$, producing the factor $(2\pi/\tau)^{d/2}$ and the determinant. The remainder follows from standard Laplace method estimates. See for instance \cite{Bot54, Wong01}.
\end{proof}

\subsection{Boundary distance for the model metric}\label{app:boundary-distance}

In this appendix we justify Remark \ref{rem:boundary-distance} (in Section \ref{sec:normal}) in the constant exponent case and compute the constant $c_\beta$.

\begin{lemma}\label{lem:boundary-distance}
Assume that $\beta>0$ is constant and that, in a collar neighborhood $(0,a]\times M$, the metric is $g=dx^2+x^{-\beta}h_0$.
Let $\overline{X}$ be the metric completion of $((0,a]\times M,g)$ and identify $\{0\}\times M$ with the boundary copy of $M$ in $\overline{X}$. Let $d_\partial$ be the restriction of the distance of $\overline{X}$ to $\{0\}\times M$. Then for every $p\in M$, as $q\to p$ in $M$,
$$
d_\partial(p,q)=c_\beta\,d_{h_0}(p,q)^{\frac{2}{2+\beta}}(1+\mathrm{o}(1)),
$$
where
$$
c_\beta = 2\left(\frac{2+\beta}{4}\right)^{\frac{2}{2+\beta}} \left(\frac{\sqrt{\pi}}{\beta}\frac{\Gamma(\frac{1}{\beta})}{\Gamma(\frac{1}{\beta}+\frac{1}{2})}\right)^{\frac{\beta}{2+\beta}}.
$$
\end{lemma}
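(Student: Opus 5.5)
The plan is to reduce the computation to a one-dimensional variational problem by exploiting the scaling symmetry of the model metric, and then to solve that problem explicitly.

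\textbf{Scaling.} For $t>0$, the dilation $(x,y)\mapsto(t x,\,t^{(2+\beta)/2}y)$ pulls $dx^2+x^{-\beta}\vert dy\vert^2$ back to $t^2$ times the same metric. Hence, in the flat model $h_0=\vert dy\vert^2$ on $\R^n$, boundary distances satisfy
$$
d_\partial(0,y)=d_\partial(0,e_1)\,\vert y\vert^{2/(2+\beta)},
$$
where $e_1$ is a unit vector.

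\textbf{Localization to the flat model.} As $q\to p$, a near-minimizing path from $p$ to $q$ stays in the region $x\lesssim d_\partial(p,q)^{\cdot}$ and $d_{h_0}(\cdot,p)\lesssim d_{h_0}(p,q)$. Indeed, leaving the boundary costs length of order $x$, and a horizontal displacement $\ell$ at height $x$ costs $x^{-\beta/2}\ell$. After the blow-up above, the metric $h_0$ frozen at $p$ in normal coordinates converges to the Euclidean metric, uniformly on the rescaled region. This gives $d_\partial(p,q)=c_\beta\,d_{h_0}(p,q)^{2/(2+\beta)}(1+\mathrm{o}(1))$ with $c_\beta=d_\partial^{\mathrm{flat}}(0,e_1)$. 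To make this rigorous, I would give matching upper and lower bounds: test curves for the upper bound, and a $(1\pm\varepsilon)$ quasi-isometry on the rescaled compact region for the lower bound. Paths that wander far away are ruled out because their length is at least comparable to the straight-up-and-down competitor.

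\textbf{Computing $c_\beta$.} By symmetry, a minimizer in the flat model lies in the plane spanned by $x$ and $e_1$. This reduces the problem to the two-dimensional metric $dx^2+x^{-\beta}dy^2$. The coordinate $y$ is cyclic, so the conserved momentum is $x^{-\beta}\dot y=\kappa$ for unit-speed geodesics, giving $\dot x^2=1-\kappa^2x^\beta$. The geodesic rises from $x=0$ to the turning height $x_*=\kappa^{-2/\beta}$ and returns symmetrically. The horizontal displacement and the length are then
$$
Y=2\int_0^{x_*}\frac{\kappa x^\beta}{\sqrt{1-\kappa^2x^\beta}}\,dx,\qquad L=2\int_0^{x_*}\frac{dx}{\sqrt{1-\kappa^2x^\beta}}.
$$
Substituting $x=x_*u^{1/\beta}$ expresses both as Beta integrals:
$$
L=\frac{2x_*}{\beta}B\!\left(\tfrac1\beta,\tfrac12\right),\qquad Y=\frac{2x_*^{1+\beta/2}}{\beta}B\!\left(1+\tfrac1\beta,\tfrac12\right).
$$
Eliminating $x_*$ by setting $Y=1$ gives
$$
c_\beta=L=\frac{2}{\beta}B\!\left(\tfrac1\beta,\tfrac12\right)\Big(\tfrac{\beta}{2}\,B\!\left(1+\tfrac1\beta,\tfrac12\right)^{-1}\Big)^{2/(2+\beta)}.
$$
Finally I would simplify using $\Gamma(1/2)=\sqrt\pi$ and $\Gamma(1+1/\beta)=\beta^{-1}\Gamma(1/\beta)$, together with $B\!\left(1+\tfrac1\beta,\tfrac12\right)=\frac{2}{2+\beta}B\!\left(\tfrac1\beta,\tfrac12\right)$, to reach the stated closed form.

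\textbf{Main obstacle.} The hard step is showing that this single-arc geodesic is the global minimizer, rather than, for example, a path running along the boundary or a multi-arc path. I would argue that any curve can be replaced by its projection to the $(x,y_1)$-plane without increasing length. In two dimensions, a calibration argument applies: the function $\phi=$ distance from the point $0$ has level sets foliated by these geodesics. Alternatively, one can use direct convexity of the resulting one-parameter family in $\kappa$.
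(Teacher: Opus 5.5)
Your proposal is correct and follows essentially the same route as the paper. Both use the dilation $(x,y)\mapsto(tx,t^{1+\beta/2}y)$ for the power law, a blow-up at $p$ with a $(1\pm\omega)$ comparison to the flat model for localization, and a reduction to the planar problem solved by a first integral and Beta integrals. Your momentum $x^{-\beta}\dot y=\kappa$ is the paper's Beltrami identity, and your identity $B(1+\frac1\beta,\frac12)=\frac{2}{2+\beta}B(\frac1\beta,\frac12)$ is its $I_1=\frac{2}{2+\beta}I_0$. You go beyond the paper in flagging global minimality of the single-arc geodesic, which the paper simply assumes; your projection-plus-calibration sketch is a reasonable way to close it. Your localization bound should read $x\lesssim d_\partial(p,q)\asymp d_{h_0}(p,q)^{2/(2+\beta)}$.
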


\begin{proof}
We proceed in two steps.

\noindent
\emph{Step 1: the flat model.}
Consider $(0,+\infty)\times\R^n$ with the metric $g_\beta=dx^2+x^{-\beta}\vert dy\vert^2$.
Let $d_\beta$ be the distance induced on $\{0\}\times\R^n$ by the metric completion. By translation and rotation invariance in $y$, $d_\beta(y_1,y_2)$ depends only on $\vert y_2-y_1\vert$. Moreover, the dilation $\delta_s(x,y)=(sx,s^{1+\frac{\beta}{2}}y)$, $s>0$, satisfies $\delta_s^*g_\beta=s^2 g_\beta$, hence it multiplies lengths by $s$ and therefore distances by $s$. Setting $\theta=\frac{2}{2+\beta}$, this scaling implies that there exists a constant $c_\beta>0$ such that $d_\beta(y_1,y_2)=c_\beta\vert y_2-y_1\vert^\theta$.
It remains to compute $c_\beta=d_\beta(0,e_1)$.

By symmetry, any minimizing curve joining $(0,0)$ to $(0,e_1)$ is contained in the $2$-plane spanned by the $x$-axis and the direction $e_1$. We reduce to $n=1$ and consider the metric $dx^2+x^{-\beta}dy^2$ on $(0,+\infty)\times\R$.
A minimizing curve has monotone $y$, so we may parametrize it by $y\in[0,1]$ as $x=x(y)$. Its length is
$$
L(x)=\int_0^1\sqrt{x'(y)^2+x(y)^{-\beta}}\,dy.
$$
Since the integrand does not depend explicitly on $y$, the quantity $\frac{x^{-\beta}}{\sqrt{x'^2+x^{-\beta}}}$ is constant along an Euler-Lagrange solution. Let $x_{\max}$ be the maximum of $x$ along the minimizer; by symmetry it is attained at $y=\frac{1}{2}$, where $x'=0$. Hence the constant equals $x_{\max}^{-\beta/2}$, and on $[0,\frac{1}{2}]$ we have
$$
x'^2=x^{-2\beta}\big(x_{\max}^\beta-x^\beta\big),
\qquad
\frac{dy}{dx}=x^\beta x_{\max}^{-\beta/2}\frac{1}{\sqrt{1-(x/x_{\max})^\beta}}.
$$
Integrating from $0$ to $x_{\max}$ and setting $t=x/x_{\max}$ gives the constraint $y(x_{\max})=\frac{1}{2}$:
$$
\frac{1}{2}=\int_0^{x_{\max}}\frac{dy}{dx}\,dx
=x_{\max}^{1+\frac{\beta}{2}}\int_0^1\frac{t^\beta\,dt}{\sqrt{1-t^\beta}}
=x_{\max}^{1+\frac{\beta}{2}}I_1,
\qquad I_1=\int_0^1\frac{t^\beta\,dt}{\sqrt{1-t^\beta}}.
$$
Next, using $\sqrt{x'^2+x^{-\beta}}=x^{-\beta}x_{\max}^{\beta/2}$ and the above expression for $dy/dx$, we obtain on $[0,\frac{1}{2}]$
$$
\frac{dL}{dx}=\sqrt{x'^2+x^{-\beta}}\frac{dy}{dx}=\frac{1}{\sqrt{1-(x/x_{\max})^\beta}}.
$$
Therefore,
$$
\frac{L}{2}=\int_0^{x_{\max}}\frac{dx}{\sqrt{1-(x/x_{\max})^\beta}}
=x_{\max}\int_0^1\frac{dt}{\sqrt{1-t^\beta}}
=x_{\max}I_0,
\qquad I_0=\int_0^1\frac{dt}{\sqrt{1-t^\beta}}.
$$
Combining the two identities yields $c_\beta=L=2x_{\max}I_0=2I_0(2I_1)^{-\frac{2}{2+\beta}}$.
Finally, we claim that $I_1=\frac{2}{2+\beta}I_0$ by using the change of variable $s=t^\beta$:
$$
I_0=\frac{1}{\beta}\int_0^1 s^{\frac{1}{\beta}-1}(1-s)^{-1/2}\,ds,
\qquad
I_1=\frac{1}{\beta}\int_0^1 s^{\frac{1}{\beta}}(1-s)^{-1/2}\,ds
=\frac{\frac{1}{\beta}}{\frac{1}{\beta}+\frac{1}{2}}I_0
=\frac{2}{2+\beta}I_0.
$$
Hence
$$
c_\beta=2I_0\left(\frac{4}{2+\beta}I_0\right)^{-\frac{2}{2+\beta}}
=2\left(\frac{2+\beta}{4}\right)^{\frac{2}{2+\beta}}I_0^{\frac{\beta}{2+\beta}}.
$$
Now, using the Beta function, we have $I_0=\frac{1}{\beta}B\big(\frac{1}{\beta},\frac{1}{2}\big)$, and since $B(x,y)=\frac{\Gamma(x)\Gamma(y)}{\Gamma(x+y)}$ and $\Gamma(\frac{1}{2})=\sqrt{\pi}$, we obtain $
I_0=\frac{\sqrt{\pi}}{\beta}\frac{\Gamma(\frac{1}{\beta})}{\Gamma(\frac{1}{\beta}+\frac{1}{2})}$, and the stated formula for $c_\beta$.

\noindent
\emph{Step 2: localization on $(0,a]\times M$.}
Fix $p\in M$ and choose $h_0$-normal coordinates $z$ near $p$ so that $h_0(p)$ is Euclidean at $z=0$. Let $q\to p$ and set $d=d_{h_0}(p,q)$ and $\theta=\frac{2}{2+\beta}$. In these coordinates, the $h_0$-coordinates of $q$ satisfy $\vert z(q)\vert=d(1+\mathrm{o}(1))$. Set $u=z(q)/d$, so that $\vert u\vert=1+\mathrm{o}(1)$.

Define the scaling map $S_d(s,z)=(x,y)=(d^\theta s,dz)$. The rescaled metric $\tilde{g}_d=d^{-2\theta}S_d^*g$ satisfies $\tilde{g}_d=ds^2+s^{-\beta}h_0(dz)$.
Since $h_0$ is continuous, on every fixed cylinder $\{0\leq s\leq R\ \mid\ \vert z\vert\leq R\}$ one has
$$
(1-\omega(dR))(ds^2+s^{-\beta}\vert dz\vert^2)\leq \tilde{g}_d\leq (1+\omega(dR))(ds^2+s^{-\beta}\vert dz\vert^2),
$$
with $\omega(r)\to 0$ as $r\to 0$.

Moreover, an explicit three-piece path (go up to $x=r$, move tangentially, go down) shows that $d_\partial(p,q)\leq 3d^\theta$. Hence $d_{\tilde{g}_d}\big((0,0),(0,u)\big)=d^{-\theta}d_\partial(p,q)\leq 3$, and any minimizing curve for $d_{\tilde{g}_d}((0,0),(0,u))$ stays in a fixed cylinder $\{0\leq s\leq R,\ \vert z\vert\leq R\}$ with $R$ independent of $d$.

Therefore the distance in $(\{0\}\times\R^n,\tilde{g}_d)$ between $(0,0)$ and $(0,u)$ converges to the flat boundary distance $d_\beta(0,u)$ as $d\to 0$. Using Step 1, we obtain
$$
d_{\tilde{g}_d}\big((0,0),(0,u)\big)=c_\beta\vert u\vert^\theta(1+\mathrm{o}(1))
=c_\beta(1+\mathrm{o}(1)).
$$
Multiplying by $d^\theta$ gives $d_\partial(p,q)=c_\beta d^\theta(1+\mathrm{o}(1))$, which proves the lemma.
\end{proof}

\begin{remark}\label{rem:boundary-distance-variable}
Assume that in a collar neighborhood the metric has the form $g=dx^2+x^{-\beta(m)}h_0(m)$ with $\beta\in C^1(M)$ and $h_0\in C^1(M)$.
Fix $p\in M$. Then, as $q\to p$ in $M$,
$$
d_\partial(p,q)=c_{\beta(p)}\,d_{h_0}(p,q)^{\frac{2}{2+\beta(p)}}(1+\mathrm{o}(1)),
$$
where $c_{\beta(p)}$ is given by Lemma \ref{lem:boundary-distance} with $\beta$ replaced by $\beta(p)$.
Indeed, in Step 2 of the above proof the same rescaling gives a family of metrics $\tilde g_d$ that converges uniformly on fixed cylinders to the flat model metric $ds^2+s^{-\beta(p)}\vert dz\vert^2$ since $\beta(dz)=\beta(p)+\mathrm{O}(d)$ and $h_0(dz)=h_0(p)+\mathrm{O}(d)$.
\end{remark}


\end{document}